   \definecolor{cites}{rgb}{0.50 , 0.00 , 0.00}  
   \definecolor{urls} {rgb}{0.00 , 0.00 , 0.50}  
   \definecolor{links}{rgb}{0.00 , 0.00 , 0.50}   
\definecolor{sccol}{rgb}{0,0,0.5}
\newcommand{\bl}[1]{\textcolor{blue}{#1}}   
\newcommand\A{{\mathcal A}}
\newcommand\C{{\mathbb C}}
\newcommand\D{{\mathbb D}}
\newcommand\N{{\mathbb N}}
\newcommand\I{{\mathbb I}}
\newcommand\J{{\mathbb J}}
\newcommand\T{{\mathbb T}}
\newcommand\R{{\mathbb R}}
\newcommand\Z{{\mathbb Z}}
\newcommand\eps{{\varepsilon}}
\newcommand\Spec{{\rm Spec}\,}  
\newcommand\Specn{{\rm Spec}}   
\newcommand\speps{{\rm spec}_\eps}
\newcommand\Speps{{\rm Spec}_\eps}
\newcommand\dist{{\rm dist}}
\newcommand\conv{{\rm conv}}
\newcommand\diag{{\rm diag}}
\newcommand\ri{{\rm i}}
\newcommand\supp{{\rm supp}}
\newcommand\Hto{
   \unitlength0.1ex
   \begin{picture}(30,15)
   \put(13,16){\makebox(0,0)[]{\tiny\rm H}}
   \put(15,5){\makebox(0,0)[]{$\to$}}
   \end{picture}
}
\newtheorem{theorem}{Theorem}[section]
\newtheorem{lemma}[theorem]{Lemma}
\newtheorem{corollary}[theorem]{Corollary}
\newtheorem{proposition}[theorem]{Proposition}
\newenvironment{remark}
 {\par\noindent\refstepcounter{theorem}{\bf Remark \thetheorem\ }}
 {\raisebox{1mm}{\framebox{}}\par\pagebreak[2]}
\newcommand\Proofend{\rule{2mm}{2mm}}
\numberwithin{equation}{section}   
\numberwithin{figure}{section}  
\newcounter{abccounter}
\renewcommand{\thefootnote}{\arabic{footnote}}
\newcommand{\symbolfootnote}[1]{\begingroup
\renewcommand{\thefootnote}{\fnsymbol{footnote}}\footnote{#1}\endgroup}
\begin{document}
\begin{frontmatter}


\title{Gershgorin-type spectral inclusions for matrices}

\author[label1]{Simon N. Chandler-Wilde \corref{cor1}}
\affiliation[label1]{organization={Department of Mathematics and Statistics, University of Reading},
          city={Reading},
                       postcode={RG6 6AX},
            country={UK}}

\author[label2]{Marko Lindner} 

\affiliation[label2]{organization={Institut Mathematik, TU Hamburg (TUHH)},
            city={Hamburg},
            postcode={D-21073}, 
            country={Germany}}

\cortext[cor1]{This pdf contains the research article as its first 40 pages, the pdf part of the Supplementary Materials as the remainder of the document. {\em E-mail addresses:} \href{mailto:s.n.chandler-wilde@reading.ac.uk}{\tt s.n.chandler-wilde@reading.ac.uk} (S.~N.~Chandler-Wilde), \href{mailto:lindner@tuhh.de}{\tt lindner@tuhh.de} (M.~Lindner)}

\begin{abstract}
In this paper we derive sequences of  Gershgorin-type inclusion sets for the spectra and pseudospectra of finite matrices. In common with previous generalisations of the classical Gershgorin bound for the spectrum, our inclusion sets are based on a block decomposition. In contrast to previous generalisations that treat the matrix as a perturbation of a block-diagonal submatrix, our arguments treat the matrix as a perturbation of a block-tridiagonal matrix, which can lead to sharp spectral bounds, as we show for the example of large Toeplitz matrices. Our inclusion sets, which take the form of unions of pseudospectra of square or rectangular submatrices,  build on our own recent work on inclusion sets for bi-infinite matrices
[Chandler-Wilde, Chonchaiya, Lindner, {\em J. Spectr. Theory} {\bf 14}, 719--804 (2024)].
\end{abstract}


\begin{keyword}
spectrum \sep pseudospectrum \sep banded matrix \sep Toeplitz matrix


\MSC[2020] 15A17 \sep 47A10 \sep 15B05 \sep 47B36

\end{keyword}

\end{frontmatter}

\section{Introduction, our main results, and the related literature} \label{sec:intro}
Let $A\in \C^{M\times M}$ be an $M\times M$ matrix with complex entries, which we will write in block form as
\begin{equation} \label{eq:block}
A = [a_{ij}]_{i,j=1}^N
\end{equation}
where $a_{ij}\in \C^{m_i \times m_j}$ is a subblock of $A$, with $m_i\in \{1,\ldots,M-1\}$, for $i=1,\ldots,N$, so that $M\geq N>1$ and
$$
\sum_{i=1}^N m_i = M. 
$$ 
The aim of this paper is to derive new families of inclusion sets for the spectrum and pseudospectrum of $A$. Recall that $\Spec A$, the spectrum of $A$, is its set of eigenvalues, and, for $\eps>0$, 
\begin{equation} \label{eq:speceps}
\Speps A := \{\lambda\in \C: \|(A-\lambda I)^{-1}\|^{-1}\leq \eps\}, 
\end{equation}
is the (closed) $\eps$-pseudospectrum of $A$. Here $I$ is the identity matrix, we adopt the convention that $\|(A-\lambda I)^{-1}\|^{-1}:= 0$ when $A-\lambda I$ is singular, so that $\Spec A \subset \Speps A$, for $\eps>0$, and, except where explicitly indicated otherwise, our norms are $2$-norms, so that 
$$
\|(A-\lambda I)^{-1}\|^{-1} = s_{\rm min}(A-\lambda I), \qquad \lambda \in \C,
$$
where $s_{\rm min}(E)$ denotes the smallest singular value of a matrix $E$. With our convention the definition \eqref{eq:speceps} also makes sense when $\eps=0$, coinciding with the set of eigenvalues of $A$, i.e., $\Specn_0 A = \Spec A$.  

Using our own recent results in \cite{CW.Heng.ML:SpecIncl1}, where we derived analogous inclusion sets for the spectra and pseudospectra of bi-infinite matrices, we will derive two sequences of inclusion sets for the finite matrix case that we will term, following \cite{CW.Heng.ML:SpecIncl1}, the $\tau$ and $\tau_1$ methods. The starting point for the derivation of each of these methods is to establish inclusion sets for $B=[b_{ij}]_{i,j=1}^N$, defined by
$$
b_{ij} := \left\{\begin{array}{ll} a_{ij}, & |i-j|\leq 1,\\ 0, & \mbox{otherwise},\end{array}\right.
$$ 
which we term the {\it tridiagonal part} of $A$. We term $C:=A-B$ the {\em remaining part} of $A$, and set
\begin{eqnarray} \nonumber
r_L(A) &:=& \max\{\|a_{2,1}\|,\|a_{3,2}\|,\ldots,\|a_{N,N-1}\|\},\\ \label{eq:rdef}
 r_U(A)& :=& \max\{\|a_{1,2}\|,\|a_{2,3}\|,\ldots,\|a_{N-1,N}\|\},\\ \nonumber
r(A) &:=& r_L(A)+r_U(A),
\end{eqnarray}
so $r_L(A)$ and $r_U(A)$ are the maxima of the norms of the submatrices on the first subdiagonal and first superdiagonal of $A$, respectively, and $r(A)$ is an upper bound for both the maximum row sum and the maximum column sum of the norms of the block off-diagonal entries of $B$.

We will shortly, in Sections \ref{sec:tau}-\ref{sec:tau1}, define our sequences of spectral and pseudospectral inclusion sets for the $\tau$ and $\tau_1$ methods, respectively. Our main results are Theorems \ref{thm:tau} and \ref{thm:tau1}, statements that the sets we define are, indeed, inclusion sets. We prove these theorems in \S\ref{sec:main}. Preceding \S\ref{sec:main} we introduce in \S\ref{sec:tools} concepts  and ideas that we need for our arguments. Our main tool is corresponding inclusion sets for bi-infinite matrices  recently established in \cite{CW.Heng.ML:SpecIncl1}. We recall the results we need from \cite{CW.Heng.ML:SpecIncl1} in \S\ref{sec:bi}, where we make, necessary for our later arguments, some generalisation of our earlier results as Theorems \ref{cor:nubound} and \ref{thm:taubi}.

In \S\ref{sec:examples} we apply our spectral inclusion sets to particular matrices. The examples we choose are mainly Toeplitz, this an attractive class because much is known about the asymptotics of spectra and pseudospectra for large matrix order (e.g.,  \cite{ReichelTref,BoeGru,BoeSi2,TrefEmbBook}). In \S\ref{sec:disLap} we treat two canonical tridiagonal examples, the non-normal case where $A$ is a Jordan block, and the real symmetric case where $A$ is a discrete Laplacian. A message from these two simple examples is that for large Toeplitz matrices our sequences of inclusion sets provide accurate approximations to the pseudospectrum, indeed  also to the spectrum when the matrix is Hermitian. This message is confirmed by analysis of the general Toeplitz case in \S\ref{sec:bandT} where we show in Theorem \ref{thm:Tgen} that our $\tau$-method sequence of inclusion sets accurately approximates the pseudospectra of large banded Toeplitz matrices, indeed (Theorem \ref{thm:Herm}) also the spectrum if the matrix is Hermitian. In Theorem  \ref{thm:final} we show the same result for the $\tau_1$-method inclusion set sequence for Toeplitz matrices that 
 have symbol in the so-called Wiener algebra class.

Our $\tau$-method sequence of spectral inclusion sets can be seen as an extension to a family, indexed by a parameter $n\in \N$, of previous block-matrix versions of the Gershgorin theorem  (see \cite{Ostrowski,Feingold,Fiedler,Varga}). These are close to the $\tau$ method in the simplest case $n=1$, as we explain in Remark \ref{rem:case1}. In \S\ref{sec:disLap2}, and in a final \S\ref{sec:2T} studying a family of $2$-Toeplitz tridiagonal Hermitian matrices, we demonstrate that our new sequences of inclusion sets can give much sharper estimates for the spectrum than these previous block-matrix extensions (see Remark \ref{rem:bmG} and \S\ref{sec:T2G}), indeed that these block-matrix extensions can lead to larger inclusion sets than classical Gershgorin.
 In \S\ref{sec:disLap2}, \S\ref{sec:banded}, and \S\ref{sec:T2G} we also make comparison in the banded Hermitian case between our  $\tau$-method inclusion sets for the spectrum and $\R\cap G$, where $G$ is the classical Gershgorin inclusion set. We show, for the families of matrices we study including the large Hermitian Toeplitz class, that, while $\R\cap G$ is a sharp inclusion set in particular cases, our $\tau$-method sequence provides sharp inclusions in every case; see Remark \ref{rem:BandG}, \S\ref{sec:T2G}, and Figure \ref{fig:Toep2}.

In \S\ref{sec:concl} we make concluding remarks and indicate directions for futher work. 

In supplementary materials we provide the Matlab codes used to produce Figures \ref{fig:jordan}-\ref{fig:Toep2}. These materials also contain Matlab code for computing the $\tau$-method eigenvalue inclusion set $\R\cap \Sigma_0^n(A)$ whenever $A$ is Hermitian, and application of this code to check the accuracy of formulae \eqref{eq:Rcap} and \eqref{eq:Rcap2p} below.

\begin{figure}[t]
\begin{center}
\[
B\ =\
\begin{pmatrix}~
\begin{tikzpicture}[scale=0.42]
\foreach \k in {0,0.5,...,4.5}
  \fill[gray!50] (\k,6-\k) rectangle ++(1.5,-1.5);
\draw[gray!30,very thin,step=0.5cm] (0,0) grid (6,6);
\end{tikzpicture}
~\end{pmatrix}
\ =\
\begin{pmatrix}~
\begin{tikzpicture}[scale=0.42]
\foreach \k in {0,0.5,...,4.5}
  \fill[gray!50] (\k,6-\k) rectangle ++(1.5,-1.5);
\draw[gray!30,very thin,step=0.5cm] (0,0) grid (6,6);
\draw[black,line width=0.5mm,step=1.5cm] (0,0) grid (6,6);
\foreach \i in {1,4}
  {
  \foreach \j in {1,4}
     \node at ({1.5*\j-0.75},{6.75-1.5*\i}) {$b_{{\i},{\j}}$};
  }
\end{tikzpicture}
~\end{pmatrix}
\ =\
\begin{pmatrix}~
\begin{tikzpicture}[scale=0.42]
\draw[gray!10,very thin,step=0.5cm] (0,0) grid (6,6);
\draw[gray,line width=0.3mm,step=1.5cm] (0,0) grid (6,6);
\foreach \k [evaluate=\k as \l using int(80-20*\k)] in {1,2,3}
  {
    \fill[gray!\l] ({1.5*(\k-1)},{7.5-1.5*\k}) rectangle ++(3,-3);
    \draw[black,line width=0.7mm] ({1.5*(\k-1)},{7.5-1.5*\k}) rectangle ++(3,-3);
    \pgfmathtruncatemacro{\sub}{\k-1}
    \node at ({1.5*\k-0.5},{6.5-1.5*\k}) {$B_{n,{\sub}}$};
  }
\end{tikzpicture}
~\end{pmatrix}
\]
\end{center}
\caption{Illustration, for the case that $M=12$, $N=4$, $m_i=3$, $i=1,\ldots,N$, and $n=2$, of the part $B$ of the matrix $A$, of its block tridiagonal representation, and of the sub-matrices $B_{n,k}$, for $k=0,\ldots,N-n$, that arise in the $\tau$ method.} \label{fig:matrices}
\end{figure}

\subsection{The $\tau$ method and earlier generalisations of the Gershgorin theorem} \label{sec:tau}
For $n=1,\ldots,N$ and $k=0,\ldots,N-n$, let 
$$
B_{n,k}:=[b_{ij}]_{i,j=k+1}^{k+n},
$$ 
so that $B_{n,k}$ is the $n\times n$ block-tridiagonal matrix consisting of the blocks of $B$ whose rows and columns are in the range $k+1$ through $k+n$. The first sequence  of inclusion sets we propose, termed the $\tau$ method ($\tau$ for truncation), is based on these {\it truncations} or {\it finite sections} $B_{n,k}$ of the block-tridiagonal matrix $B$; see Figure \ref{fig:matrices}. To define these inclusion sets,  for $n\in \N$ let
\begin{equation} \label{eq:epsdef}
\eps_n(A) := 2r(A) \sin(\theta_n(A)/2) + \|C\| \leq 2r(A) \sin(\pi/(2n+4)) + \|C\|\leq \frac{\pi r(A)}{n+2} + \|C\|,
\end{equation}
where $\theta_n(A)$ is the unique solution $t$ in the range
$\left[\frac{\pi}{2n+1}\,,\,\frac{\pi}{n+2}\right]$
of the equation
\begin{equation} \label{eq:theta}
2\sin\Big(\frac{t}{2}\Big)\cos\Big(\Big(n+\frac{1}{2}\Big)t\Big)\ +\ \frac{r_L(A)r_U(A)}{(r(A))^2} \sin\left(\left(n-1\right) t\right)\ =\ 0.
\end{equation}
Note that, in particular, 
$$
\eps_1(A)= r(A)+\|C\|.
$$
For $\eps\geq 0$ and $n=1,\ldots,N-1$ (with $N> n\geq 3$ for the second definition), let
\begin{eqnarray*}
\sigma^n_{\eps}(A) &:=& \bigcup_{k=0}^{N-n} \Specn_{\eps+\eps_n(A)} B_{n,k} \cup \bigcup_{m=1}^{n-1} \left(\Specn_{\eps+\eps_n(A)} B_{m,0}\cup \Specn_{\eps+\eps_n(A)} B_{m,N-m}\right),\\
\widehat \sigma^n_{\eps}(A) &:=&\bigcup_{k=0}^{N-n} \Specn_{\eps+\eps_{n-2}(A)} B_{n,k},
\end{eqnarray*}
and define the {\it $\tau$-method inclusion set $\Sigma^n_{\eps}(A)$} by
\begin{equation} \label{eq:Sigdef}
\Sigma^n_{\eps}(A) := \left\{\begin{array}{ll}\sigma^n_{\eps}(A), & \mbox{if } n\leq 2,\\
\sigma^n_{\eps}(A) \cap \widehat \sigma^n_{\eps}(A),& \mbox{if } n> 2.\end{array} \right.
\end{equation}
The following is our main result for the $\tau$ method.
\begin{theorem}[\bf The $\tau$ method inclusion sets] \label{thm:tau} For $\eps\geq 0$ and $n=1,\ldots,N-1$,
$$
\Speps A \subset \Sigma^n_{\eps}(A), \qquad \mbox{in particular} \qquad \Spec A \subset  \Sigma^n_{0}(A).
$$
\end{theorem}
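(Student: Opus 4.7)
The plan is to reduce the theorem to the bi-infinite $\tau$-method inclusion result (Theorem \ref{thm:taubi}) via two successive reductions. First, decompose $A = B + C$ with $B$ the block-tridiagonal part of $A$. The standard perturbation bound $\Speps A \subseteq \Specn_{\eps + \|C\|} B$, combined with the identity $\eps_n(A) = 2 r(A) \sin(\theta_n(A)/2) + \|C\|$, reduces the claim to proving the analogous inclusion for $B$, with truncation error $\delta_n := 2 r(A) \sin(\theta_n(A)/2)$ and shifted parameter $\eps' := \eps + \|C\|$.

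Second, extend $B$ to a bi-infinite block-tridiagonal matrix $\widetilde B$ in such a way that (i) the upper and lower off-diagonal norm bounds are preserved, so that $r_L(\widetilde B) = r_L(A)$ and $r_U(\widetilde B) = r_U(A)$, and (ii) $\Specn_{\eps'} B \subseteq \Specn_{\eps'} \widetilde B$. Natural candidates are zero padding or a periodic/reflecting extension. Applying the bi-infinite $\tau$-method (Theorem \ref{thm:taubi}) to $\widetilde B$ then yields that every $\lambda \in \Specn_{\eps'} \widetilde B$ lies in $\Specn_{\eps' + \delta_n} \widetilde B_{n,k}$ for some $k \in \Z$. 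Windows $\widetilde B_{n,k}$ lying entirely inside the finite block of $\widetilde B$ are exactly the interior truncations $B_{n,k'}$, $k' \in \{0,\ldots,N-n\}$; windows that overlap the boundary, after accounting for the chosen extension, are identified with the boundary truncations $B_{m,0}$ or $B_{m,N-m}$, $m < n$, that make up the rest of $\sigma_\eps^n(A)$.

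For the stronger inclusion into $\widehat\sigma_\eps^n(A)$ (needed when $n > 2$), I would apply the bi-infinite $\tau$-method at window size $n-2$, but position the windows so that each sits strictly interior to some full $B_{n,k}$, leaving a one-block buffer on each side. This allows the boundary-overlap terms that would otherwise appear in $\sigma_\eps^n$ to be absorbed into the full $n$-windows, yielding the improved error $\delta_{n-2}$ while only the interior truncations $B_{n,k}$ with $k \in \{0,\ldots,N-n\}$ are retained. Intersecting the two inclusions gives $\Sigma_\eps^n(A)$.

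The main obstacle, I expect, is the second step: arranging a bi-infinite extension of $B$ under which both the sharp constant $\delta_n$ of Theorem \ref{thm:taubi} remains valid and the bi-infinite boundary windows coincide precisely with $B_{m,0}$ and $B_{m,N-m}$, without producing spurious extra terms beyond those listed in $\sigma_\eps^n(A)$. This is presumably the motivation for the generalisations flagged by the authors as Theorems \ref{cor:nubound} and \ref{thm:taubi} of the results in \cite{CW.Heng.ML:SpecIncl1}. The derivation of $\delta_n$ via the transcendental equation \eqref{eq:theta} is itself a delicate Jacobi-matrix computation with off-diagonal norms $r_L(A)$ and $r_U(A)$, but one that can be imported wholesale from \cite{CW.Heng.ML:SpecIncl1}.
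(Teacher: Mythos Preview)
Your proposal is essentially correct and matches the paper's approach. The reduction to the block-tridiagonal case via the perturbation bound is exactly what the paper does, and your identification of the boundary-window bookkeeping as the crux is spot on.

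Two specifics worth noting, since you leave the choice of extension open. For $\sigma^n_\eps(A)$ the paper uses the \emph{periodic} extension $E=\diag(\ldots,A,A,\ldots)$; this gives $\Speps E=\Speps A$ exactly (direct sum), and each boundary window $E_{n,k}$ with $N-n<k<N$ is itself a direct sum $\diag(A_{N-k,k},A_{k+n-N,0})$, so its pseudospectrum is precisely the union of the two boundary terms you want --- no spurious pieces. Zero padding would not work here, since the zero tails contribute $\{0\}$ to the spectrum. For $\widehat\sigma^n_\eps(A)$ the paper instead uses \emph{zero padding} but does not invoke Theorem~\ref{thm:taubi} directly; rather it applies Theorem~\ref{cor:nubound} at window size $n$ with the constraint $w_1=w_n=0$, so that the penalty term becomes $\eta_{n-2}$ and, crucially, the vanishing end-weights let any boundary-overlapping window be slid to $k=0$ or $k=N-n$ without changing $\|E_{n,k}x_{n,k}\|$. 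This is exactly your ``one-block buffer'' intuition, but implemented through the freedom in $w$ rather than by repositioning windows after the fact.
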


\noindent To aid comprehension and use of the above result we make a number of remarks.

\vspace{0.7ex} 

\begin{remark}{\bf (The case $n=1$ and previous generalisations of Gershgorin).}  \label{rem:case1} In the case that $n=1$ and $\eps=0$ the above result is close to previous generalisations of the classical Gershgorin theorem  to block matrices. For in that case the bound on $\Spec A$ in Theorem \ref{thm:tau} reduces to
\begin{equation} \label{eq:n=1}
\Spec A  \subset  \Sigma^1_{0}(A) =  \sigma^1_0(A) = \bigcup_{k=0}^{N-1} \Specn_{r(A)+\|C\|} B_{1,k}= \bigcup_{k=1}^{N} \Specn_{r(A)+\|C\|} a_{k,k}. 
\end{equation}
This is reminiscent of block matrix versions of the Gershgorin theorem, discovered independently by Ostrowski \cite{Ostrowski}, Feingold \& Varga \cite{Feingold}, and Fiedler \& Pt\'ak \cite{Fiedler} (and see \cite[Chapter 6]{Varga}). These read that (e.g., \cite[Theorem 6.3]{Varga})
\begin{equation} \label{eq:salas}
\Spec A \subset \bigcup_{k=1}^N  G_k, 
\end{equation}
where
$$
G_k := \{\lambda\in \C: \|(a_{k,k}-\lambda I)^{-1}\|^{-1} \leq r_k\}, \quad r_k := \sum_{j=1, \,j\neq k}^N \|a_{k,j}\|,
$$
and the norms here are any consistent set of matrix norms (see \cite[\S6.1]{Varga} for detail). Of course, the set $G_k$ is precisely an $\eps$-pseudospectrum of $a_{k,k}$, defined with respect to the chosen norm, with $\eps=r_k$. If we take each of the matrix norms in the definition of $G_k$ in \eqref{eq:salas} to be the $2$-norm, then $G_k$ is our $2$-norm pseudospectrum and \eqref{eq:salas} reads
\begin{equation} \label{eq:salas2}
\Spec A \subset \bigcup_{k=1}^N \Specn_{r_k} a_{k,k}.
\end{equation}
In the case that $N=M$, so that each $a_{k,k}\in \C$ is a scalar, \eqref{eq:salas} and \eqref{eq:salas2} reduce to the classical Gershgorin theorem \cite{Gershgorin,Varga}, that
\begin{equation} \label{eq:ger}
\Spec A \subset G:=\bigcup_{k=1}^N  G_k, \, \mbox{ where } \, G_k = \{\lambda\in \C: |a_{k,k}-\lambda| \leq r_k\}, \;\; r_k = \sum_{j=1, \,j\neq k}^N |a_{k,j}|,
\end{equation}
with $G_k$ commonly termed the $k$th {\it Gershgorin disc}.

To make the connection between \eqref{eq:salas2} and \eqref{eq:n=1}, note that, where $D=\diag(a_{1,1},\ldots,a_{N,N})$ is the main diagonal of $A$, each $r_k$ in \eqref{eq:salas2} is a lower bound for the mixed infinity and 2-norm of $A-D$, defined by
$$
\|A-D\|_{\infty,2} := \max_{k\in \{1,\ldots,N\}} \sum_{j=1, \,j\neq k}^N \|a_{k,j}\|,
$$
with equality for some $k$, while $r(A)+\|C\|$ in \eqref{eq:n=1} is an upper bound for $\|A-D\|$, the $2$-norm of $A-D$, with equality, for example, when $B$ is diagonal so that $r(A)=0$ and $A-D=C$. 
\end{remark}

\vspace{1ex}

\begin{remark}{\bf (The case $n\geq 2$ and an example).} Theorem \ref{thm:tau} is equivalent to a statement that $\Speps A\subset  \sigma^n_{\eps}(A)$ for $n\in \N$ and also  $\Spec A\subset  \widehat \sigma^n_{\eps}(A)$ for $n>2$. Both $\sigma^n_{\eps}(A)$ and $\widehat \sigma^n_{\eps}(A)$ are unions of the $\eta$-pseudospectra of finitely many of the submatrices $B_{m,k}$, with $m=n$ for $\widehat \sigma^n_{\eps}(A)$ and $1\leq m\leq n$ for $\sigma^n_{\eps}(A)$, and with 
$\eta \leq \eps +\pi r(A)/n + \|C\|$
for each pseudospectrum. For the example 
 in Figure \ref{fig:matrices} with $N=4$,  and taking $n=2$ as in the figure caption, the Theorem \ref{thm:tau} bound on $\Spec A$ is
\begin{eqnarray*}
\Spec A &\subset  &\Sigma^2_{0}(A) = \sigma^{2}_{0}(A)\\
& = & \Specn_{\eps_2(A)} B_{2,0}\ \cup\ \Specn_{\eps_2(A)} B_{2,1}\ \cup\ \Specn_{\eps_2(A)} B_{2,2}\\ 
& & \qquad \qquad \cup\  \Specn_{\eps_2(A)} B_{1,0}\ \cup\  \Specn_{\eps_2(A)} B_{1,3}\\
& = & \Specn_{\eps_2(A)} B_{2,0}\ \cup\ \Specn_{\eps_2(A)} B_{2,1}\ \cup\ \Specn_{\eps_2(A)} B_{2,2}\\ 
& & \qquad \qquad \cup\  \Specn_{\eps_2(A)} a_{1,1}\ \cup\  \Specn_{\eps_2(A)} a_{4,4},
\end{eqnarray*}
since $B_{1,0}=b_{1,1}=a_{1,1}$ and $B_{1,3}=b_{4,4}=a_{4,4}$, and note that  $b_{1,1}$, $b_{4,4}$, $B_{2,0}$, $B_{2,1}$, and $B_{2,2}$ are displayed in Figure \ref{fig:matrices} and that $\eps_2(A) \leq 2r(A) \sin(\pi/8)+\|C\|<0.77 r(A) + \|C\|$.
\end{remark}

\vspace{1ex}

\begin{remark}{\bf (The case that $A$ is banded)} \label{rem:band} In the case that the $M\times M$ matrix $A$ is banded with band-width $w$, the block-matrix representation  \eqref{eq:block} of $A$ is tridiagonal if the block dimensions are large enough, in particular if $m_i\geq w$, for $i=1,\ldots,N$, in which case $B=A$ and $C=0$. (In Figure \ref{fig:matrices} the matrix $B$ is banded with order $M=12$ and band-width $w=2$, so that, taking $N=4$ and $m_i=3$ for $i=1,\ldots N$, as in Figure \ref{fig:matrices}, its block representation is tridiagonal.) We show examples in \S\ref{sec:disLap} and \S\ref{sec:banded} that illustrate that, for large matrices $A$, the inclusion set $\Sigma^{n}_{\eps}(A)$ for $\Speps A$ can be sharp in such cases if $n$ is large enough ($1\ll n\ll N$), so that
each $\eta$-pseudospectrum of $B_{m,k}$ in the definition of $\Sigma^{n}_{0}(A)$ has a small value of $\eta\leq \pi r(A)/n$.
\end{remark}

\vspace{1ex}

\begin{remark}{\bf (Computation)} \label{rem:comp} 
Computation of $\Sigma^{n}_{\eps}(A)$ needs computation of the pseudospectra of finitely many square matrices  and the computation of $\|C\|$. We envisage that our new inclusion sets will typically be used in cases where the order, $\# B_{n,k}$, of each $B_{n,k}$, namely
$
\# B_{n,k} = \sum_{i=k+1}^{k+n} m_i,
$
 is in the range $[1,1000]$, so that precise computation of pseudospectra of $B_{n,k}$ is feasible (see, e.g., \cite[\S IX]{TrefEmbBook}). On the other hand we expect use in cases where $M$ is so large that exact computation of the $2$-norm of $C$ may be unfeasible, in which case $\|C\|$ can be replaced in the definition   \eqref{eq:epsdef}, and so in the definition of $\Sigma^n_{\eps}(A)$, by some more computationally feasible upper bound, at the cost of an increase in the size of the inclusion set, e.g.,
$$
\|C\| \leq \|C\|_F \qquad \mbox{or} \qquad \|C\| \leq \sqrt{\|C\|_1\|C\|_\infty\,},
$$ 
where $\|\cdot\|_F$ denotes the Frobenius norm. In the special case that $A$ is Hermitian, also $A_{n,k}$ is Hermitian for each $n$ and $k$ so that, where $\D:=\{z\in \C:|z|<1\}$,
$$
\Speps A_{n,k} = \Spec A_{n,k} + \eps \overline{\D} = \bigcup_{\lambda \in \Spec A_{n,k}} (\lambda +  \eps \overline{\D})
$$
is the closed $\eps$-neighbourhood of the eigenvalues of $A_{n,k}$, for $\eps>0$ (see \S\ref{sec:tools}).
\end{remark}

\begin{figure}
\[
\begin{tabular}{cc}
$
\begin{pmatrix}~
\begin{tikzpicture}[scale=0.45]
\fill[gray!20] (1.5,4.5) rectangle ++(3,-3);
\foreach \k in {0,0.5,...,5}
  \fill[gray!50] (\k,6-\k) rectangle ++(1.0,-1.0);
\draw[gray!30,very thin,step=0.5cm] (0,0) grid (6,6);
\draw[black,line width=0.7mm] (1.5,4.5) rectangle ++(3,-3);
\node at (3,3) {\Large $B_{n,k}$};
\end{tikzpicture}
~\end{pmatrix}
$
&
$
\begin{pmatrix}~
\begin{tikzpicture}[scale=0.45]
\fill[gray!20] (1.5,5) rectangle ++(3,-4);
\foreach \k in {0,0.5,...,5}
  \fill[gray!50] (\k,6-\k) rectangle ++(1.0,-1.0);
\draw[gray!30,very thin,step=0.5cm] (0,0) grid (6,6);
\draw[black,line width=0.7mm] (1.5,5) rectangle ++(3,-4);
\node at (3,3) {\Large $B^+_{n,k}$};
\end{tikzpicture}
~\end{pmatrix}
$
\\
the $\tau$-method
&
the $\tau_1$-method
\end{tabular}
\]

\vspace{-2ex}

\caption{Typical submatrices $B_{n,k}$ and $B^+_{n,k}$ of the block-tridiagonal matrix $B$.} \label{fig:three}
\end{figure}

\subsection{The $\tau_1$ method: rectangular submatrices} \label{sec:tau1}

Let $b_{0,1}$ and $b_{N+1,N}$ denote zero matrices of dimensions $1\times m_1$, and $1\times m_N$, respectively. 
For $n=1,\ldots,N$ and $k=0,\ldots,N-n$, let 
$B^+_{n,k}$ 
be the $(n+2)\times n$ block-tridiagonal matrix (see Figure \ref{fig:three})
\begin{equation} \label{eq:Bn+}
B^+_{n,k} := \begin{pmatrix}
b_{k,k+1} & 0&\cdots&0& 0\\\hline
& & B_{n,k}\\\hline
0& 0 &\cdots&0 & b_{k+n+1,k+n}
\end{pmatrix}, 
\end{equation}
with the understanding that the $0$'s are zero matrices of dimensions consistent with the block structure of $B^+_{n,k}$. 
For $k=1,\ldots,N-n-1$,  $B_{n,k}^+$ 
consists precisely of the blocks of $B$ whose rows are in the range $k$ through $k+n+1$ and columns in the range $k+1$ to $k+n$. For all $k=0,\ldots,N-n$, $B^+_{n,k}$ contains all the non-zero blocks of $B$ in columns $k+1$ through $k+n$.  

Our second sequence of inclusion sets, termed the $\tau_1$ method ($\tau_1$ for one-sided truncation), is based on pseudospectra of these {\it one-sided} or {\it rectangular truncations}, $B^+_{n,k}$, of the block-tridiagonal matrix $B$. Our notion of the pseudospectrum of an $(n+2)\times n$ rectangular matrix is essentially that of \cite{WrightTrefethen,TrefEmbBook}. Let $E^+_n$ and $I_n^+$ be block matrices of the same dimensions and with the same block sizes as $B^+_{n,k}$, with $I_n^+$ taking the form 
\begin{equation} \label{eq:In+}
 I^+_n := \begin{pmatrix}
0&\cdots&0\\\hline
&I_n &\\\hline
0&\cdots&0
\end{pmatrix},
\end{equation}
where $I_n$ is an order $n$ block identity matrix. Then we define the $\eps$-pseudospectrum of the rectangular matrix $E^+_n$ by
\begin{equation} \label{eq:recteps}
\Speps E^+_n := \{\lambda\in \C: s_{\min}(E^+_n-\lambda I^+_n)\leq \eps\}.
\end{equation}

To define the $\tau_1$ inclusion sets,  for $n\in \N$ let
\begin{equation} \label{eq:epstau1def}
\eps''_n(A) := 2r(A) \sin(\pi/(2n+2)) + \|C\| \leq \frac{\pi r(A)}{n+1} + \|C\|.
\end{equation}
For $\eps\geq 0$ and $n=1,\ldots,N-1$, define the {\it $\tau_1$-method inclusion set $\Gamma^{n}_{\eps}(A)$} by
\begin{equation} \label{eq:Gamdef}
 \Gamma_{\eps}^{n}(A)\ :=\ \bigcup_{k=0}^{N-n}
\Specn_{\eps+\eps'_n(A)} B^{+}_{n,k}.
\end{equation}
The following is our main result on the $\tau_1$ method. Note that the inclusions here are two-sided, in contrast to Theorem \ref{thm:tau}. Obvious variants of Remarks \ref{rem:band} and \ref{rem:comp} apply in this $\tau_1$ method case.
\begin{theorem}[\bf The $\tau_1$ method inclusion sets] \label{thm:tau1} For $n=1,\ldots,N-1$ and  $\eps\geq 0$,
$$
\Speps A \subset \Gamma^{n}_{\eps}(A) \subset \Specn_{\eps+\eps'_n(A)+2\|C\|}A, 
$$
in particular
$$\Spec A \subset  \Gamma^{n}_{0}(A)  \subset \Specn_{\eps'_n(A)+2\|C\|}A.$$
\end{theorem}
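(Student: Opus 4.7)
The plan is to prove the two inclusions of Theorem \ref{thm:tau1} separately. The first (forward) inclusion $\Speps A\subset\Gamma^n_\eps(A)$ will be obtained by reducing to a bi-infinite $\tau_1$-inclusion developed in \S\ref{sec:bi} (a companion to Theorem \ref{thm:taubi}); the second (reverse) inclusion is a direct embedding argument. What makes Theorem \ref{thm:tau1} two-sided, unlike Theorems \ref{thm:tau} and \ref{thm:pi}, is that the $(n+2)\times n$ rectangular matrix $B^+_{n,k}$ contains \emph{all} non-zero blocks of $B$ in columns $k+1,\ldots,k+n$, so zero-padded test vectors generate no truncation error.

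For the forward inclusion, I would take $\lambda\in\Speps A$ with witness $x\in\C^M$, $\|x\|=1$, $\|(A-\lambda I)x\|\leq\eps$, so that $\|(B-\lambda I)x\|\leq\eps+\|C\|$, and zero-extend $x$ and $B$ to the bi-infinite setting and invoke the bi-infinite $\tau_1$-inclusion. The underlying mechanism is sinusoidal windowing: for each shift $k$, let $W_k$ be block-diagonal with weight $\sin(\pi(j-k)/(n+1))$ on block $j\in\{k+1,\ldots,k+n\}$ (and $0$ otherwise). The identity $(B-\lambda I)W_k x=W_k(B-\lambda I)x+[B,W_k]x$, together with the commutator estimate $\|[B,W_k]\|\leq 2r(A)\sin(\pi/(2n+2))$ (from block-tridiagonality of $B$ and the bound $|w_{j+1}-w_j|\leq 2\sin(\pi/(2n+2))$ for consecutive sinusoidal weights), produces, via an averaging argument over $k$ that exploits $\sum_k W_k^2$ being essentially scalar, a local test vector $\tilde y:=W_{k^\ast}x$ realising $\|(B-\lambda I)\tilde y\|\leq(\eps+\eps''_n(A))\|\tilde y\|$. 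Since $\tilde y$ is supported within the support of $x$, one checks $k^\ast\in\{0,\ldots,N-n\}$; identifying $(B-\lambda I)\tilde y$ on block rows $k^\ast,\ldots,k^\ast+n+1$ with $(B^+_{n,k^\ast}-\lambda I^+_n)y$ by \eqref{eq:Bn+} (where $y$ is the natural $n$-block restriction of $\tilde y$) yields $\lambda\in\Specn_{\eps+\eps''_n(A)} B^+_{n,k^\ast}\subset\Gamma^n_\eps(A)$.

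For the reverse inclusion, pick $\lambda\in\Specn_{\eps+\eps''_n(A)} B^+_{n,k}$ for some $k$, with unit $y$ realising $\|(B^+_{n,k}-\lambda I^+_n)y\|\leq\eps+\eps''_n(A)$. Embed $y$ as $y'\in\C^M$ by placing its blocks in positions $k+1,\ldots,k+n$ with zero padding elsewhere, so $\|y'\|=1$. Block-tridiagonality of $B$ forces $(B-\lambda I)y'$ to be supported in block rows $k,\ldots,k+n+1$ and to agree there with $(B^+_{n,k}-\lambda I^+_n)y$ by \eqref{eq:Bn+}, giving $\|(B-\lambda I)y'\|\leq\eps+\eps''_n(A)$ and hence $\|(A-\lambda I)y'\|\leq\eps+\eps''_n(A)+\|C\|\leq\eps+\eps''_n(A)+2\|C\|$, placing $\lambda\in\Specn_{\eps+\eps''_n(A)+2\|C\|}A$.

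The main obstacle is the forward inclusion, specifically the exact tracking of the constant $2r(A)\sin(\pi/(2n+2))$ in the commutator estimate and the averaging that selects $k^\ast$; both of these are already handled in the bi-infinite setting of \S\ref{sec:bi}, so the finite-matrix argument essentially reduces to verifying that zero-extension preserves the relevant norms and that only shifts $k^\ast\in\{0,\ldots,N-n\}$ occur, which holds because $B$ is banded and agrees with its bi-infinite zero-extension on this range.
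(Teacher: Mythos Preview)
Your proposal is correct and follows essentially the paper's route: the forward inclusion via bi-infinite zero-extension plus optimal sinusoidal windowing (packaged in the paper as Theorem~\ref{thm:main}, resting on Theorems~\ref{cor:nubound} and~\ref{thm:inf}), and the reverse inclusion via the zero-padding embedding you describe (which the paper phrases as an application of \eqref{eq:numon}). The paper organises things by first reducing to the block-tridiagonal case $C=0$ via \eqref{eq:PseudInc2} and then lifting back, whereas you handle $C$ inline; the two are equivalent, and your direct reverse-inclusion argument in fact gives $\eps+\eps''_n(A)+\|C\|$, one $\|C\|$ sharper than the stated bound. One small point to tighten: your justification ``$\tilde y$ is supported within the support of $x$'' only yields $-n<k^\ast<N$, not $k^\ast\in\{0,\ldots,N-n\}$; the boundary cases require the short padding argument that the paper carries out as Cases~2 and~3 in the proof of Theorem~\ref{thm:main}, which works precisely because the zero-extension of $B$ has no entries outside the original $N\times N$ block.
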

 
\section{Notations and tools} \label{sec:tools}
Throughout, $\N=\{1,2,\ldots\}$, $\Z$, $\R$, and $\C$ denote the sets of natural, integer, real, and complex numbers, respectively,
$\D:=\{z\in\C:|z|<1\}$, and $\T:=\{z\in\C:|z|=1\}$. $\overline S$ denotes the closure of a set $S\subset\C$ and $\bar z$ denotes the complex conjugate of $z\in \C$. For Hilbert spaces $H_1$ and $H_2$, $L(H_1,H_2)$ 
 denotes the space of bounded linear operators from $H_1$ to $H_2$, with $L(H_1,H_1)$ abbreviated as $L(H_1)$, and $H'_1$ denotes the dual space of $H_1$ (space of continuous linear functionals). For $E\in L(H_1)$ the spectrum of $H_1$, $\Spec H_1$, is the set of $\lambda\in \C$ such that $E-\lambda I$ is not invertible, where $I$ is the identity operator. If $H_1$ is finite-dimensional, this is the set of eigenvalues of $E$. If $E$ is a  complex-valued matrix, $E^T$ denotes its transpose and $E^H$ its conjugate transpose.

\paragraph{Our Hilbert space $Y_\I$} Our arguments will feature Hilbert spaces that are finite- or infinite-dimensional, constructed in the following manner. Choose $\I\subset \Z$, finite or infinite. For each $i\in \I$, let $X_i$ denote a complex Hilbert space and set 
\begin{equation} \label{eq:YI}
Y_\I := \bigoplus_{i\in \I}X_i,
\end{equation}
which (see, e.g., \cite[\S1.6]{ConwayFA}) is a Hilbert space with the norm $\|\cdot\|$ given by
$$
\|x\|^2 = \sum_{i\in \I} \|x_i\|^2,  \quad \mbox{for} \quad x=(x_i)_{i\in \I}\in Y_\I.
$$
 In the case $X_i=X$, for $i\in \I$ and some Hilbert space $X$,  we have $Y_\I = \ell^2(\I,X)$.  

\paragraph{Band operators}  Let $E\in L(Y_\I)$. Then (e.g., \cite{LiBook}) $E$ has a matrix representation $[E]=[e_{ij}]_{i,j\in \I}$, denoted again by $E$, with all $e_{ij}\in L(X_j,X_i)$, such that, for every $x=(x_j)_{j\in \I}\in Y_\I$ with finitely many non-zero entries, $Ex = y=(y_i)_{i\in \I}$, where 
\begin{equation} \label{eq:matmult}
y_i = \sum_{j\in \I} e_{ij}x_j, \quad i\in \I.
\end{equation}

Let $BO(Y_\I)$ denote the linear subspace of those  $E\in L(Y_\I)$ whose matrix representation is {\em banded} with some {\em band-width $w$}, meaning that $e_{ij}=0$ for $|i-j|>w$. If $E\in BO(Y_\I)$, then \eqref{eq:matmult} holds (with the sum finite) for all $x\in Y_\I$. Conversely, if $[E]=[e_{ij}]_{i,j\in \I}$ is a banded matrix of operators  $e_{ij}\in L(X_j,X_i)$ that satisfies 
\begin{equation} \label{eq:eijbound}
\sup_{i,j\in \I}\|e_{ij}\| < \infty,
\end{equation}
then the mapping $E:Y_\I\to Y_\I$, $x\mapsto y$, with $y$ given by \eqref{eq:matmult}, satisfies $E\in BO(Y_\I)$, and $[E]$ is the matrix representation of  $E$. As a special case of this observation the block matrix $A$ given by \eqref{eq:block} can be identified, in the usual way, with an operator in $L(Y_\I)$, denoted also by $A$, with   $\I=\{1,\ldots,N\}$ and $X_i = \C^{m_i\times 1}$, $i=1,\ldots,N$, and with the mapping $x\mapsto Ax$ given by
$$
(Ax)_i = \sum_{j=1}^N a_{ij} x_j, \quad \mbox{for} \quad x=(x_1,\ldots,x_N) \in Y_\I.
$$
Note that  $L(Y_\I)=BO(Y_\I)$ if $\I$ is finite.


\paragraph{The dual space $(Y_\I)'$ and adjoint operators} For $\I\subset\Z$ and $E\in L(Y_\I)$, we denote by $E^*\in L(Y_\I)$ and $E'\in L((Y_\I)')$ the Hilbert space and Banach space adjoints of $E$, respectively. We will identify the dual space $(Y_\I)'$ with $Y_{\I}$ through the mapping $Y_\I\to (Y_\I)'$, $x=(x_i)_{i\in \I}\mapsto \hat x$, where $\hat x(y) := (y,\bar x)$,
for $y\in Y_\I$, where $(\cdot,\cdot)$ is the inner product on $Y_\I$, $\bar x=(\bar x_i)_{i\in \Z}$, and, for $i\in \I$, $\bar x_i := J_ix_i$, where $J_i:X_i \to X_i$ is any fixed anti-linear isometric involution on $X_i$ (sometimes called a conjugate map, and easily constructed using an orthonormal basis for $X_i$, e.g., \cite[Conclusion 2.1.18]{SaSc:11}). In the case that each $X_i=\C^{n_i}=\C^{n_i\times 1}$, for some $n_i\in \N$, we choose $\bar x_i = J_i x_i$ to be simply the complex conjugate of $x_i$. If $[e_{ij}]_{i,j\in\I}$ is the matrix representation of $E$, then $[e^*_{ji}]_{i,j\in \I}$ is the matrix representation of $E^*$ and, with the above identification of $(Y_\I)'$ with $Y_\I$, $[e'_{ji}]_{i,j\in \I}$ is the matrix representation of $E'$. In particular,  if each $X_i=\C^{n_i}=\C^{n_i\times 1}$, for some $n_i\in \N$, so  $e_{ij}\in \C^{n_i\times n_j}$, for $i,j\in \I$, we have $e^*_{ij}=e^H_{ij}$ and $e'_{ij}=e_{ij}^T$, for $i,j\in \I$. Further, $E$ is a block matrix with complex-valued matrix blocks  and $E^*=E^H$, $E'=E^T$.

\paragraph{The lower norm, spectrum, and pseudospectrum}
For $\I\subset\Z$, a set $T\subset\I$ and $E\in L(Y_\I)$, put
\begin{equation} \label{eq:nuT}
\nu_T(E)\ :=\ \inf\{\|Ex\|:\|x\|=1,\ \supp(x)\subset T\},
\end{equation}
where, for $x=(x_i)_{i\in\I}$, $\supp(x):=\{i\in\I:x_i\ne 0\}$. Clearly, it holds that
\begin{equation}\label{eq:numon}
\nu_{T}(E)\ \ge\ \nu_{U}(E)
\quad\text{if}\quad
T\subset U\subset\I.
\end{equation}
Another basic result (e.g., \cite[Lemma 2.38]{LiBook}) is that, if $E,F\in L(Y_\I)$,
\begin{equation} \label{eq:Lipschitz}
|\nu_T(E)-\nu_T(F)|\ \le\ \|E-F\|, \quad \mbox{ for all } \quad T\subset \I.
\end{equation}
Note also that if  $Y_\I$ is finite-dimensional, so that $\{x\in Y_\I:\|x\|=1\}$ is compact, then the $\inf$ in \eqref{eq:nuT} can be replaced by $\min$.

 Abbreviate $\nu_\I(E)=:\nu(E)$. By analogy to the norm of $E$, i.e.\ to $\|E\|:= \sup\{\|Ex\|:\|x\|=1\}$, $\nu(E)$ is sometimes  (by abuse of notation) called the {\sl lower norm} 
 of $E$. Since $Y_\I$ is a Hilbert space, and where $s_{\rm max}(E)$ and $s_{\rm min}(E)$ denote the largest and smallest singular values\footnote{Recall that the singular values of a bounded linear operator $F$ on a Hilbert space $Y$ are the points in the spectrum of $(F^*F)^{1/2}$.} of $E$,
$$
\|E\|=s_{\rm max}(E) \quad \mbox{ and } \quad \nu(E)=s_{\rm \min}(E).
$$
Another key property of $\nu(\cdot)$ is that
\begin{equation} \label{eq:invnu}
\|E^{-1}\|^{-1}\ =\ \min\big(\,\nu(E),\nu(E')\,\big)\ =:\ \mu(E),
\end{equation}
where  $\|E^{-1}\|^{-1}:= 0$ if $E$ is not invertible. In particular, $E$ is invertible if and only if $\nu(E)$ and $\nu(E')$ are both nonzero, i.e., if and only if $\mu(E)\neq 0$, in which case
$\nu(E)=\nu(E')=\mu(E)$. Further, if $E$ is Fredholm of index zero, in particular if $Y_\I$ is finite-dimensional, then $\nu(E)=0$ if and only if $\nu(E')=0$, so that 
\begin{equation} \label{eq:munu}
\mu(E)=\nu(E)=s_{\min}(E).
\end{equation}
It follows from \eqref{eq:Lipschitz} (since $\mu(E)=\nu(E)$ if $\mu(E)\neq 0$) that, for $E,F\in L(Y_\I)$, also
\begin{equation} \label{eq:lnPro2}
|\mu(E)-\mu(F)|\leq \|E-F\|.
\end{equation}

We have recalled the definition of $\Spec E$ above. By \eqref{eq:invnu},
\[
\Spec E\ =\ \{\lambda\in\C\ :\ E-\lambda I \mbox{ is not invertible}\}\ =\ \{\lambda\in\C\ :\ \mu(E-\lambda I)=0\}.
\]
Generalising \eqref{eq:speceps}, $\Speps E$,   the {\em closed $\eps$-pseudospectrum of $E$}, is defined for $\eps\geq 0$ by
\begin{equation} \label{eq:Speps}
\Speps E := \{\lambda\in \C: \|(E-\lambda I)^{-1}\|^{-1}\leq \eps\} =\{\lambda\in\C\ :\ \mu(E-\lambda I)\le\eps\}, 
\end{equation}
by \eqref{eq:invnu}.  Note that $\Specn_0 E = \Spec E$. We will occasionally mention the  {\sl open $\eps$-pseudospectrum of $E$}, defined for $\eps>0$ by
\begin{equation} \label{eq:speps}
\speps E := \{\lambda\in \C: \|(E-\lambda I)^{-1}\|^{-1}< \eps\} =\{\lambda\in\C\ :\ \mu(E-\lambda I)<\eps\}.
\end{equation}
Using \eqref{eq:lnPro2} it is easy to see that the sets $\speps E$ and $\Speps E$ are open and closed, respectively. Indeed (see, e.g., \cite{Shargo08}), since $Y_\I$ is a Hilbert space and $E\in L(Y_\I)$,
\begin{equation} \label{eq:clop}
\Speps E = \overline{\speps E} \quad \mbox{and} \quad \speps E = {\rm int}(\Speps E),
\end{equation}
the interior of $\Speps E$. Suppose that $\I=\cup_{j\in J}\J_j$, with $\J_i\cap \J_j=\emptyset$, for $i\neq j$. In the case that $E\in L(Y_\I)$ is the direct sum $E=\oplus_{j\in J} E_j$, where $E_j\in L(Y_{\J_j})$ for $j\in J$,  a useful result, that can be seen, for example, via \eqref{eq:speps} and \eqref{eq:clop}, is that 
\begin{equation} \label{eq:ds2}
\Speps E = \overline{\bigcup_{j\in J} \Speps E_j}, \qquad \mbox{for} \quad \eps\geq 0.
\end{equation}

An informative characterisation of $\Speps E$ (which holds since $Y_\I$ is a Hilbert space; see \cite[Theorem 3.27]{HaRoSi2}) 
is that
\begin{equation} \label{eq:spepspert2}
\Speps E\ = \bigcup_{\|F\|\leq\eps} \Spec (E+F), \qquad \eps>0.
\end{equation}
It follows from \eqref{eq:spepspert2} that, for $\eps>0$,
$\Spec E + \eps \overline{\D}\subset \Speps E$, with  (see \cite[p.~247]{Davies2007:Book}) equality if $E$ is normal,
and that
\begin{equation} \label{eq:PseudInc2}
\Speps(B+T) \subset \Specn_{\eps+\delta} B \quad \mbox{if} \quad \|T\|\leq \delta.
\end{equation}

We will use the above definitions of $\Speps E$ and $\speps E$ also when $E$ is a square complex-valued matrix (cf.~\eqref{eq:speceps}, our norms then the matrix $2$-norm), and note that \eqref{eq:Speps}-\eqref{eq:PseudInc2} hold equally in that case. In particular, in the case that $E=\diag(E_1,E_2)$ is the direct sum of complex-valued square matrices $E_1$ and $E_2$, it follows from \eqref{eq:ds2} (cf.~\cite[Thm.~2.4]{TrefEmbBook}) that
\begin{equation} \label{eq:ds}
\Speps E = \Speps E_1 \cup \Speps E_2, \quad \mbox{for} \quad \eps\geq 0.
\end{equation}

In the case when $E$ is an $(n+2)\times n$ matrix, for some $n\in \N$ (with scalar, block, or operator-valued entries), we define the closed $\eps$-pseudospectrum of $E$, for $\eps\geq 0$, by \eqref{eq:recteps}. 
 Since $s_{\min}(E-\lambda I_n)=\nu(E-\lambda I_n)$,
\begin{equation} \label{eq:speps.rect}
\begin{array}{ccc}
\Speps E\ =\ \{\lambda\in\C\ :\ \nu(E-\lambda I_n)\le\eps\},&&\eps\ge 0.
\end{array}
\end{equation}
The example \eqref{eq:chara} below illustrates that 
$\Speps E$ can be empty when $E$ is rectangular, in contrast to the case when $E$ is square when each pseudospectrum contains a neighbourhood of the spectrum.

\section{Spectral inclusions for bi-infinite matrices} \label{sec:bi}

Our inclusion set results for finite matrices, Theorems \ref{thm:tau} and \ref{thm:tau1}, are derived from generalisations, which we present in this section,  of analogous inclusion sets for the spectra and pseudospectra of bi-infinite matrices from \cite{CW.Heng.ML:SpecIncl1}. 

Suppose that $\I=\Z$ and that $E\in L(Y_\I)$, where $Y_\I$ is given by \eqref{eq:YI}. It is enough for our purposes to suppose that $E$ has a matrix representation $[e_{ij}]_{i,j\in \Z}$ that is tridiagonal, in which case, since $E\in L(Y_I)$, the coefficients $e_{ij}$ satisfy \eqref{eq:eijbound}. Analogously to \eqref{eq:rdef}, let
\begin{equation}  \label{eq:rdef2}
\begin{aligned}
r_L(E) := \sup\{\|e_{i+1,i}\|:i\in \Z\}, &\quad r_U(E) := \sup\{\|e_{i,i+1}\|:i\in \Z\},\\
r(E) & := r_L(E)+r_U(E).
\end{aligned}
\end{equation}
For $k\in \Z$ and $n\in \N$, let
$$
E_{n,k}:=[e_{ij}]_{i,j=k+1}^{k+n},
$$ 
and (cf.~\eqref{eq:Bn+}) let
\begin{equation} \label{eq:En+}
\begin{aligned}
E^+_{n,k} &:= \begin{pmatrix}
e_{k,k+1} & 0&\cdots&0& 0\\\hline
& & E_{n,k}\\\hline
0& 0 &\cdots&0 & e_{k+n+1,k+n}
\end{pmatrix},\\ 
(E')^+_{n,k} &:= \begin{pmatrix}
e'_{k+1,k} & 0&\cdots&0& 0\\\hline
& & E'_{n,k}\\\hline
0& 0 &\cdots&0 & e'_{k+n,k+n+1}
\end{pmatrix}.
\end{aligned}
\end{equation}
For $w\in \R^n\setminus \{0\}$, and setting $w_0:=0$ and $w_{n+1} := 0$, let
\begin{equation*} 
S_{n}(w)\ :=\ \sum_{j=1}^{n}w_{j}^{2}, \quad T_{n}^{-}(w)\ :=\
\sum_{j=1}^{n}\left(w_{j-1}-w_{j}\right)^{2},
\end{equation*}
\[
T_{n}^{+}(w)\ :=\ \sum_{j=1}^{n}\left(w_{j+1}-w_{j}\right)^{2}, \quad T_{n}(w)\ :=\
w_{1}^{2}+w_{n}^{2}+\sum_{j=1}^{n-1}\left(w_{i+1}-w_{i}\right)^{2}.
\]
For $w\in \R^n\setminus \{0\}$ and tridiagonal $E\in L(Y_\I)$, let
\begin{equation} \label{eq:epsn1}
\eta_n(E,w)\ :=\
r_L(E)\sqrt{\frac{T_{n}^{-}(w)}{S_{n}(w)}}\ +\
r_U(E)\sqrt{\frac{T_{n}^{+}(w)}{S_{n}(w)}}\,,
\end{equation}
\begin{equation} \label{eq:epsn2}
\eta'_n(E,w)\ :=\ r(E)\sqrt{\displaystyle\frac{T_{n}(w)}{S_{n}(w)}}.
\end{equation}

The following is the main result for bi-infinite matrices that we need. The inequalities \eqref{eq:infon} and  \eqref{eq:infonG} are results regarding the approximation of the lower norm of a bi-infinite matrix by the lower norm of a finite submatrix. (The inequality \eqref{eq:infonG} provides an upper as well as a lower bound, the upper bound an immediate consequence of \eqref{eq:numon}.)  These inequalities generalise  results in   \cite{CW.Heng.ML:SpecIncl1} for the case that, for some Banach space $X$,  $X_i=X$, for $i\in \Z$, so that $Y_\Z=\ell^2(\Z,X)$, but the proofs apply to the case we need that $Y_\I$ has the form \eqref{eq:YI}, with each $X_i$ a Hilbert space. (The proofs of \eqref{eq:akbk1} and \eqref{eq:infon} are those of \cite[Prop.~3.3, Cor.~3.4]{CW.Heng.ML:SpecIncl1}, of \eqref{eq:akbk2} and \eqref{eq:infonG} are those of \cite[Prop.~5.1, Cor.~5.2]{CW.Heng.ML:SpecIncl1}.)

\begin{theorem}[{\bf Approximation of $\nu(E)$}] \label{cor:nubound}
Let $n\in\N$, $w=(w_1,\ldots,w_n)\in\R^n\setminus\{0\}$,   and  $x=(x_i)_{i\in \Z}\in Y_\Z\setminus\{0\}$, suppose $E\in L(Y_\Z)$ is tridiagonal, and let $x_{n,j} := (w_1 x_{j+1},\ldots, w_n x_{j+n})^T$, for $j\in \Z$. Then, for some $k,\ell,m\in \Z$, $x_{n,k}$, $x_{n,\ell}$, and $x_{n,m}$ are non-zero with
\begin{eqnarray} \label{eq:akbk1}
\frac{\|E_{n,k} x_{n,k}\|}{\|x_{n,k}\|} &\leq& \frac{\|Ex\|}{\|x\|}+\eta_{n}(E,w),\\  \label{eq:akbk2}
\frac{\|E^+_{n,\ell} x_{n,\ell}\|}{\|x_{n,\ell}\|} &\leq & \frac{\|Ex\|}{\|x\|}+\eta'_n(E,w).
\end{eqnarray}
As a consequence,
\begin{eqnarray} \label{eq:infon}
\inf_{k\in \Z} \nu(E_{n,k}) & \leq & \nu(E)+\eta_n(E,w),\\
 \label{eq:infonG}
\inf_{k\in \Z} \nu(E^+_{n,k}) & \leq & \nu(E)+\eta'_n(E,w)\ \leq\ \inf_{k\in \Z} \nu(E^+_{n,k}) +\eta'_n(E,w).
\end{eqnarray}
\end{theorem}

We think of the terms $\eta_n(E,w)$ and $\eta'_n(E,w)$ in \eqref{eq:infon} and \eqref{eq:infonG} as {\em penalty terms} accounting for the truncation of $E$ to $E_{n,k}$ and $E^+_{n,k}$, respectively. The following result from \cite{CW.Heng.ML:SpecIncl1} minimises these penalty terms as a function of  $w\in\R^n\setminus\{0\}$. The formulae \eqref{eq:wtauinf} and \eqref{eq:wtau1inf} are proved in  \cite[Corollaries 3.8 and 5.4]{CW.Heng.ML:SpecIncl1}. 

\begin{theorem}[{\bf Minimisation of the penalty terms}] \label{thm:inf} For each $n\in \N$ and each tridiagonal $E\in L(Y_Z)$, the infimum of each of $\eta_n(E,w)$ and $\eta'_n(E,w)$, as a function of $w\in\R^n\setminus\{0\}$, is achieved for some $w$ with $\|w\|=1$, and
\begin{equation} \label{eq:wtauinf}
\min_{w\in \R^n, \, \|w\|=1} \eta_n(E,w)\ \leq\ \eta_n(E)\ :=\ 2r(E) \sin(\theta_n(E)/2),
\end{equation}
with equality if $n=1$ or $r_L(E)r_U(E)=0$,
where $\theta_n(E)$ is the unique solution $t$ in the range
$\left[\frac{\pi}{2n+1}\,,\,\frac{\pi}{n+2}\right]$
of equation \eqref{eq:theta}, and
\begin{eqnarray}  \label{eq:wtau1inf}
\min_{w\in \R^n, \, \|w\|=1} \eta'_n(E,w) &=& \eta'_n(E)\ :=\ 2r(E) \sin(\pi/(2n+2)).
\end{eqnarray}
\end{theorem}

Inclusion set results, analogous to Theorems \ref{thm:tau} and \ref{thm:tau1}, follow by application of the above results.  Theorem \ref{thm:taubi} below, that follows from the above theorems and that we use to prove our main results in \S\ref{sec:main}, is a variant on results in \cite{CW.Heng.ML:SpecIncl1} for the case that $Y_\Z=\ell^2(\Z,X)$, for some Banach space $X$. To state Theorem \ref{thm:taubi} we introduce additional notation. For tridiagonal $E\in L(Y_\Z)$ and $n\in \N$, let
\begin{eqnarray} \label{eq:mudag}
\mu_n(E) & :=& \inf_{k\in \Z} \, \mu(E_{n,k})  = \inf_{k\in \Z} \, \min\Big(\nu\left(E_{n,k}\right),\,\nu\left(E'_{n,k}\right)\Big),\\ \label{eq:mudag2}
\mu^+_n(E)&  := & \inf_{k\in \Z} \min\Big(\nu\left(E^+_{n,k}\right),\,\nu\left((E')^+_{n,k}\right)\Big),
\end{eqnarray}
and, for $\eps\geq 0$ (cf.~\eqref{eq:Sigdef} and \eqref{eq:Gamdef}), let
\begin{eqnarray} \label{eq:sigmaAAlt}
\Sigma^n_{\eps}(E) & := &  \{\lambda\in \C:\mu_n(E-\lambda I)\leq \eps+\eta_n(E)\},\\ \label{eq:gamdef}
\Gamma^n_{\eps}(E) &:=  &\left\{\lambda\in \C : \mu^+_n(E-\lambda I)\leq \eps+\eta'_n(E)\right\},
\end{eqnarray} 
 where $\eta_n(E)$ and $\eta'_n(E)$ are as defined in Theorem \ref{thm:inf}. The following theorem is a bi-infinite version of Theorems \ref{thm:tau} and \ref{thm:tau1}. We omit the proof which is identical to the  proofs of this result (in \cite[Thm.~3.5, Cor.~3.8]{CW.Heng.ML:SpecIncl1} for \eqref{eq:sigmaAAlt} and  \cite[Thm.~5.3, Cor.~5.4]{CW.Heng.ML:SpecIncl1} for \eqref{eq:gamdef}) for the special case that $X_i=X$, $i\in \Z$.

\begin{theorem}[\bf The $\tau$ and $\tau_1$ method inclusion sets: bi-infinite matrices] \label{thm:taubi} Suppose $E\in$
$L(Y_\Z)$ is tridiagonal. For $\eps\geq 0$ and $n\in \N$,
$$
\Speps E\ \subset\ \Sigma^n_\eps(E) \qquad \mbox{and} \qquad \Speps E\ \subset\ \Gamma^n_\eps(E) \subset\ \Specn_{\eps+\eta'_n(E)}.
$$
\end{theorem}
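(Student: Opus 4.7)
The plan is to reduce both inclusions to the approximation inequalities \eqref{eq:infon} and \eqref{eq:infonG} of Theorem \ref{cor:nubound}, minimised over $w$ via \eqref{eq:wtauinf} and \eqref{eq:wtau1inf} of Theorem \ref{thm:inf}, combined with the decomposition $\mu(F) = \min(\nu(F), \nu(F'))$ from \eqref{eq:invnu} applied to $F = E - \lambda I$. First observe that $r_L$, $r_U$, and hence $\eta_n$ and $\eta''_n$, are insensitive to the diagonal, so $\eta_n(E - \lambda I) = \eta_n(E)$ and similarly for $\eta''_n$. Passing to the Banach-space adjoint merely swaps $r_L$ with $r_U$, and since both penalty terms depend only on $r = r_L + r_U$ and the symmetric ratio $r_L r_U / r^2$ (cf.\ \eqref{eq:theta}), one also has $\eta_n(E') = \eta_n(E)$ and $\eta''_n(E') = \eta''_n(E)$. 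Finally, principal and plus-truncations commute with adjunction: $((E - \lambda I)_{n,k})' = ((E - \lambda I)')_{n,k}$, and analogously for the $(\cdot)^+_{n,k}$ variant.

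For the $\tau$-inclusion $\Speps E \subset \Sigma^n_\eps(E)$, suppose $\lambda \in \Speps E$, so $\mu(E - \lambda I) \leq \eps$, and hence at least one of $\nu(E - \lambda I)$, $\nu((E - \lambda I)')$ is at most $\eps$. Applying \eqref{eq:infon} with the optimal $w$ to whichever of $E - \lambda I$ or $(E - \lambda I)'$ has small lower norm produces some $k$ satisfying either $\nu((E-\lambda I)_{n,k}) \leq \eps + \eta_n(E)$ or $\nu(((E-\lambda I)_{n,k})') \leq \eps + \eta_n(E)$; in either case $\min(\nu((E-\lambda I)_{n,k}), \nu((E-\lambda I)'_{n,k})) \leq \eps + \eta_n(E)$, so $\mu_n(E - \lambda I) \leq \eps + \eta_n(E)$ and $\lambda \in \Sigma^n_\eps(E)$. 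The inclusion $\Speps E \subset \Gamma^n_\eps(E)$ is proved by the identical argument with the left half of \eqref{eq:infonG} and the bound \eqref{eq:wtau1inf} in place of \eqref{eq:infon} and \eqref{eq:wtauinf}, and with $\eta''_n, \mu^+_n, (\cdot)^+_{n,k}$ replacing $\eta_n, \mu_n, (\cdot)_{n,k}$.

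For the remaining inclusion $\Gamma^n_\eps(E) \subset \Specn_{\eps + \eta''_n(E)} E$, the right-hand inequality in \eqref{eq:infonG} — equivalently, the elementary observation that $F^+_{n,k}$ encodes the action of $F$ on block vectors supported in columns $\{k+1, \ldots, k+n\}$, whose image, by tridiagonality of $F$, is confined to rows $\{k, \ldots, k+n+1\}$ — gives $\nu(F) \leq \nu(F^+_{n,k})$ for every $k$. Applying this to $F = E - \lambda I$ and to $F = (E - \lambda I)'$ and then taking the minimum and the infimum over $k$ yields $\mu(E - \lambda I) \leq \mu^+_n(E - \lambda I)$, so any $\lambda \in \Gamma^n_\eps(E)$ satisfies $\mu(E - \lambda I) \leq \eps + \eta''_n(E)$ and lies in $\Specn_{\eps + \eta''_n(E)} E$. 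The only genuinely analytic content is already packaged in Theorems \ref{cor:nubound} and \ref{thm:inf}; the main obstacle in writing this out carefully is purely bookkeeping, namely verifying that the adjoint-with-submatrix interchanges remain consistent under the identification of $(Y_\Z)'$ with $Y_\Z$ used throughout.
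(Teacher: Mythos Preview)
Your argument is correct and matches the approach the paper references (the paper itself omits the proof, deferring to \cite{CW.Heng.ML:SpecIncl1}): reduce to \eqref{eq:infon}, \eqref{eq:infonG} via Theorem~\ref{thm:inf}, applied to $E-\lambda I$ and to $(E-\lambda I)'$, together with $\mu=\min(\nu,\nu')$.

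One small correction: your claim that plus-truncations ``commute with adjunction'' is literally false, since $(E^+_{n,k})'$ is $n\times(n+2)$ while $(E')^+_{n,k}$ is $(n+2)\times n$. But you don't actually need this. Applying \eqref{eq:infonG} to $(E-\lambda I)'$ yields a bound on $\nu\big(((E-\lambda I)')^+_{n,k}\big)=\nu\big((E')^+_{n,k}-\lambda I^+_n\big)$, and this is exactly the second quantity appearing in the definition \eqref{eq:mudag2} of $\mu^+_n$; no interchange of $(\cdot)'$ with $(\cdot)^+_{n,k}$ is required. So the argument stands once you drop the commutation claim for the rectangular truncations.
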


\section{The proofs of our main results} \label{sec:main}
We turn now to the proofs of our main results, the inclusion set results for finite matrices, Theorems \ref{thm:tau} and \ref{thm:tau1}. Throughout this section $A$ is the finite complex-valued matrix with block representation \eqref{eq:block}.

\begin{theorem} \label{thm:main} Suppose that $A$ is block-tridiagonal. Then
\begin{eqnarray} \label{eq:nu1}
\min_{0\leq k\leq N-n} \nu(A^+_{n,k}) &\leq & \nu(A)+\eps'_{n}(A)\ \leq\  \min_{0\leq k\leq N-n} \nu(A^+_{n,k}) + \eps'_{n}(A), \hspace*{4ex}
\end{eqnarray}
for $n=1,\ldots,N-1$, and
\begin{eqnarray} \label{eq:nu2}
\min_{0\leq k\leq N-n} \nu(A_{n,k}) &\leq & \nu(A)+\eps_{n-2}(A), \qquad \mbox{for} \quad n>2.
\end{eqnarray} 
\end{theorem}
\begin{proof}
Let $X_i = \C^{m_i\times 1}$,  $i=1,\ldots,N$, so that $A$ given by \eqref{eq:block} is the matrix representation of a linear operator on $Y_\I$ with $\I=\{1,\ldots,N\}$. Extend the definition of $X_i$ to $i\in \Z$ by periodicity, i.e.\ so that $X_{i+N}=X_i$, $i\in \Z$.  Define $E=[e_{ij}]_{i,j\in \Z}$ so that $e_{ij}:=a_{ij}$, for $1\leq i,j\leq N$,  and $e_{ij}:=0$, otherwise (see Figure \ref{fig:cases}). 
Since $A$ is tridiagonal, $E\in L(Y_\Z)$ is tridiagonal. 
Note that, for $n\in \N$,
\begin{equation} \label{eq:epseta}
\eps_n(A)=\eta_n(E), \quad \mbox{and} \quad \eps'_n(A)=\eta'_n(E).  
\end{equation}

Suppose now that $n\in \I$. Since $Y_\I$ is finite-dimensional we can choose $\tilde x\in Y_\I$ with $\|\tilde x\|=1$ such that $\nu(A) = \|A\tilde x\|$. Extend $\tilde x$ by zeros to be an element $x=(x_i)_{i\in \Z}$ of  $Y_\Z$. Then $x_i=\tilde x_i$, for $i\in \I$, $x_i=0$, otherwise, $\|x\|=1$, and $\|Ex\|=\|A\tilde x\|=\nu(A)$.  For $k\in \Z$ let $\widehat E_{n,k}$ denote $E_{n,k}$ or $E_{n,k}^+$, and let $\widehat A_{n,k}$ denote $A_{n,k}$ or $A_{n,k}^+$, respectively (cf.~Figure 1.2), for $0\leq k\leq N-n$. For any $w\in \R^n\setminus\{0\}$, and where $x_{n,j} := (w_1 x_{j+1},\ldots, w_n x_{j+n})^T$ for $j\in \Z$, it follows by Theorem \ref{cor:nubound} that there exists  $k\in \Z$ such that $x_{n,k}\neq 0$ and
\begin{equation} \label{eq:akbkg}
\nu(\widehat E_{n,k}) \leq \frac{\|\widehat E_{n,k} x_{n,k}\|}{\|x_{n,k}\|}  \leq \nu(A)+\hat\eta_{n}(E,w),
\end{equation}
where $\hat\eta_n(E,w)$ denotes $\eta_n(E,w)$ or $\eta'_n(E,w)$, in the respective cases   $\widehat E_{n,k}=E_{n,k}$ or $E_{n,k}^+$. Since $x_{n,k}=0$ if $k+n<1$ or $k+1>N$, it follows that $-n< k< N$. To complete the argument we consider the cases $-n< k<0$, $0\leq k \leq N-n$, and $N-n<k<N$ separately (see Figure \ref{fig:cases}).

\begin{figure}[t]
\[
\begin{tabular}{cp{3mm}cp{3mm}c}
$
\begin{pmatrix}~
\begin{tikzpicture}[scale=0.4]
\fill[gray!20] (1.5,4.5) rectangle ++(3,-3);
\foreach \k in {1.5,2,...,3.5}
  \fill[gray!60] (\k,6-\k) rectangle ++(1.0,-1.0);
\draw[gray!40,very thin,step=0.5cm] (0,0) grid (6,6);
\draw[black!30,line width=0.5mm] (1.5,4.5) rectangle ++(3,-3);
\end{tikzpicture}
~\end{pmatrix}
$
&&
$
\begin{pmatrix}~
\begin{tikzpicture}[scale=0.4]
\fill[gray!20] (1.5,4.5) rectangle ++(3,-3);
\fill[gray!30] (3.5,2.5) rectangle ++(2,-2);
\foreach \k in {1.5,2,...,3.5}
  \fill[gray!60] (\k,6-\k) rectangle ++(1.0,-1.0);
\draw[gray!40,very thin,step=0.5cm] (0,0) grid (6,6);
\draw[black!30,line width=0.5mm] (1.5,4.5) rectangle ++(3,-3);
\draw[black,line width=0.7mm] (3.5,2.5) rectangle ++(2,-2);
\end{tikzpicture}
~\end{pmatrix}
$
&&
$
\begin{pmatrix}~
\begin{tikzpicture}[scale=0.4]
\fill[gray!20] (1.5,4.5) rectangle ++(3,-3);
\fill[gray!30] (0.5,5.5) rectangle ++(2,-2);
\foreach \k in {1.5,2,...,3.5}
  \fill[gray!60] (\k,6-\k) rectangle ++(1.0,-1.0);
\draw[gray!40,very thin,step=0.5cm] (0,0) grid (6,6);
\draw[black!30,line width=0.5mm] (1.5,4.5) rectangle ++(3,-3);
\draw[black,line width=0.7mm] (0.5,5.5) rectangle ++(2,-2);
\end{tikzpicture}
~\end{pmatrix}
$
\\
$A$ and its extension $E$ &&
Case 2: $N-n<k<N$ && Case 3: $-n<k<0$
\end{tabular}
\]

\vspace{-3ex}

\caption{The bi-infinite matrix $E\in L(Y_\Z)$ that is the extension of $A\in L(Y_\I)$, and the matrices $E_{n,k}$, outlined in bold, in Cases 2 and 3.}\label{fig:cases}
\end{figure}

{\em Case 1: $0\leq k\leq N-n$.} If $k$ lies in this range then  $\widehat E_{n,k}=\widehat A_{n,k}$, so that \eqref{eq:akbkg} implies that 
\begin{equation} \label{eq:wideA}
\nu(\widehat A_{n,k})\leq \nu(A)+\hat \eta_n(E,w). 
\end{equation}

{\em Case 2: $N-n<k<N$.} If $k$ lies in this range put 
$$
x^-_{n,k} := (w_1 x_{k+1},\ldots, w_{N-k} x_{N})^T\in \bigoplus_{i=k+1}^N X_{i}
$$
and
$$
\hat x_{n,k} := \left(\begin{array}{c}0_{n+k-N}\\x^-_{n,k}\end{array}\right) \in \bigoplus_{i=N+1-n}^N X_{i},
$$
where $0_{\ell}$ denotes an appropriate null vector of length $\ell$, and put 
$$
\alpha := a_{k,k+1} \quad and \quad  \beta :=w_1\alpha x_{k+1}.
$$
Then $\|x_{n,k}\|=\|x^-_{n,k}\|= \|\hat x_{n,k}\|$ and, in the cases $\widehat E_{n,k}=E_{n,k}$ and $E_{n,k}^+$ we have
\begin{eqnarray*}
\widehat E_{n,k} x_{n,k} &=& \left(\begin{array}{c}A_{N-k,N+1-n} \,x^-_{n,k}\\0_{n+k-N}\end{array}\right) \mbox{\; and \;}  \left(\begin{array}{c}A^+_{N-k,N+1-n} \,x^-_{n,k}\\0_{n+k-N}\end{array}\right),
\end{eqnarray*}
respectively, and
\begin{eqnarray*}
\widehat A_{n,N-n} \,\hat x_{n,k} &=& \left(\begin{array}{c} 0_{n+k-N-1}\\  w_1 \alpha x_{k+1}\\ A_{N-k,N+1-n} \,x^-_{n,k}\end{array}\right) \mbox{\; and \;}
\left(\begin{array}{c} 0_{n+k-N}\\A^+_{N-k,N+1-n} \,x^-_{n,k}\end{array}\right),
\end{eqnarray*}
respectively. Thus, provided that $w_1=0$  in the case that $\widehat E_{n,k}=E_{n,k}$ and $\widehat A_{n,k}=A_{n,k}$ (which implies that $n>1$), we have that $\|\widehat E_{n,k} x_{n,k}\|=\|\widehat A_{n,N-n} \, \hat x_{n,k}\|$, so that \eqref{eq:akbkg} implies that \eqref{eq:wideA} holds for $k=N-n$.

{\em Case 3: $-n<k<0$.} If $k$ lies in this range a similar argument to that of Case 2 shows 
 that   \eqref{eq:wideA} holds for $k=0$, provided that $w_n=0$  in the case that $\widehat E_{n,k}=E_{n,k}$ and $\widehat A_{n,k}=A_{n,k}$.

Thus, in every case, \eqref{eq:wideA} holds for some $k$ in the range $0\leq k\leq N-n$, provided that $w_1=w_n=0$  in the case that $\widehat A_{n,k}=A_{n,k}$ (which implies that $n>2$). Thus
$$
\min_{0\leq k\leq N-n} \nu(\widehat A_{n,k})\leq \nu(A)+\hat \eta_n(E,w). 
$$
 Taking the infimum of $\hat \eta_n(E,w)$ in the above equation over all $w\in \R^n\setminus\{0\}$ (over all $w$ with $w_1=w_n=0$ in the case that $n>2$ and  $\widehat A_{n,k}=A_{n,k}$, noting that this infimum is the same as the infimum of $\eta_{n-2}(E,\tilde w)$ over all $\tilde w\in \R^{n-2}\setminus\{0\}$), we see that \eqref{eq:nu1} and \eqref{eq:nu2} follow from  Theorem \ref{thm:inf} and \eqref{eq:epseta} and, in the case of \eqref{eq:nu1}, by application of \eqref{eq:numon}.
\end{proof}

We come now to the proofs of our main results, establishing inclusion sets for $\Spec A$ and $\Speps A$. Many authors, for example Davies \cite{Davies2007:Book} and Trefethen \& Embree \cite{TrefEmbBook}, prefer to work with open pseudospectra, defined by \eqref{eq:speps}, rather than closed pseudospectra given by \eqref{eq:Speps}. We can derive $\tau$-method inclusion sets for the open pseudospectra $\speps A$ by combining  Theorem  \ref{thm:tau} with \eqref{eq:clop}.

\begin{proof}[{\bf Proof of Theorem  \ref{thm:tau1}}.] Note that it is enough to consider the case that $A$ is block-tridiagonal. For if $n\in\{1,\ldots,N-1\}$ 
and $\Speps A \subset \Gamma^{n}_{\eps}(A)\subset \Specn_{\eps+\eps'_n(A)}A$  whenever $A$ is block-tridiagonal and $\eps\geq 0$, then, in the case that $A$ is not block-tridiagonal, $\Speps B \subset    \Gamma^{n}_{\eps}(B)\subset \Specn_{\eps+\eps'_n(A)}B$, where $B$ is the tridiagonal part of $A$, so that, by \eqref{eq:PseudInc2}, 
$$
\Speps A \subset \Specn_{\eps+\|C\|}(B) \subset \Gamma^{n}_{\eps+\|C\|}(B)=\Gamma^{n}_\eps(A),
$$ 
and also
$$
\Gamma^{n}_\eps(A)= \Gamma^{n}_{\eps+\|C\|}(B)\subset \Specn_{\eps+\|C\|+\eps'_n(A)}B \subset \Specn_{\eps+2\|C\|+\eps'_n(A)}A.
$$

So suppose that $A$ is block-tridiagonal, so that $A$ coincides with $B$, its tridiagonal part, and its remaining part is $C=0$. Let $1\leq n<N$ and  $\eps\geq 0$, and
 suppose that $\lambda\in \Speps A$. Then, by \eqref{eq:Speps} and \eqref{eq:munu}, $\nu(A-\lambda I)\leq \eps$. But this implies, by Theorem \ref{thm:main}, that, for some $k\in \{0,\ldots,N-n\}$,  $\nu(A^{+}_{n,k}-\lambda I) \leq \eps+\eps'_{n}(A)$,  so that, by \eqref{eq:speps.rect},  $\lambda\in \Specn_{\eps+\eps'_{n}(A)} A^{+}_{n,k} \subset \Gamma^{n}_\eps(A)$. On the other hand, if $\lambda\in \Gamma^{n}_\eps(A)$, then 
 $\lambda\in \Specn_{\eps+\eps'_{n}(A)} A^{+}_{n,k}$, for some $k\in \{0,\ldots,N-n\}$, so that $\nu(A^+_{n,k}-\lambda I) \leq \eps+\eps'_{n}(A)$, so that, by \eqref{eq:numon}, $\nu(A-\lambda I)\leq \eps+\eps'_{n}(A)$, so that $\lambda \in \Specn_{\eps+\eps'_{n}(A)}(A)$.
\end{proof}

\begin{proof}[{\bf Proof of Theorem  \ref{thm:tau}.}] As in the above proof it is enough to consider the case that $A$ is block-tridiagonal. 

Let $X_i = \C^{m_i\times 1}$, for $i=1,\ldots,N$, so that $A$ given by \eqref{eq:block} is the matrix representation of a linear operator on $Y_\I$ with $\I=\{1,\ldots,N\}$. Extend the definition of $X_i$ to $i\in \Z$ by periodicity, i.e.\ so that $X_{i+N}=X_i$, $i\in \Z$. Suppose that $A$ is block-tridiagonal, so that $A$ coincides with $B$, its tridiagonal part, and its remaining part is $C=0$. Let $1\leq n< N$ and $\eps\geq 0$. To show that $\Speps A \subset \Sigma^n_\eps(A)$, we need to show that $\Speps A \subset \sigma^n_\eps(A)$ and that, for $n>2$, $\Speps A \subset \hat \sigma^n_\eps(A)$.

To see that $\Speps A \subset \sigma^n_\eps(A)$, define $E=[e_{ij}]_{i,j\in \Z}=\diag(\ldots,A,A,\ldots)$, with $e_{1,1}=a_{1,1}$. Note that $E\in L(Y_\Z)$, with $Y_\Z$ given by \eqref{eq:YI}, $E$ is tridiagonal, and $\Speps E = \Speps A$ by \eqref{eq:ds2}. Thus, for $\eps\geq 0$ and $n=1,\ldots,N$, $\Speps A = \Speps E \subset \Sigma^n_\eps(E)$ by Theorem \ref{thm:taubi}. With our definition of $E$, $E_{n,N+k}=E_{n,k}$, for $k\in \Z$. Note also that  $\eta_n(E)=\eps_n(A)$, since $\|C\|=0$. Thus 
$$
\Sigma^n_\eps(E) = \{\lambda\in \C:\mu_n(E-\lambda I)\leq \eps+\eps_n(A)\}, \quad \mbox{where} \quad \mu_n(E)  := \min_{0\leq k\leq N-1} \, \mu(E_{n,k}),
$$
so that, by \eqref{eq:Speps},
$$
\Sigma^n_\eps(E) = \bigcup_{k=0}^{N-1} \Specn_{\eps+\eps_n(A)} E_{n,k}.
$$
Now $E_{n,k}=A_{n,k}$, for $k=0,\ldots,N-n$, Further, if $n>1$, then, for $k=N-n+1,\ldots,N-1$, $E_{n,k} = \diag(A_{N-k,k},A_{k+n-N,0})$, so that
$$
\Specn_{\eps+\eps_n(A)} E_{n,k} = \Specn_{\eps+\eps_n(A)} A_{N-k,k} \cup  \Specn_{\eps+\eps_n(A)}A_{k+n-N,0},
$$
by \eqref{eq:ds}, so that $\Sigma^n_\eps(E) = \sigma^n_{\eps}(A)$. Thus $\Speps A = \Speps E \subset \sigma^n_\eps(E)$.

It follows that also  $\Speps A \subset \hat \sigma^n_\eps(A)$, if $N> n\geq 3$, by arguing by application of Theorem \ref{thm:main} (i.e., \eqref{eq:nu2})  as in the last part of the proof of Theorem \ref{thm:tau1}.
\end{proof}

\section{Examples} \label{sec:examples}

In this section we illustrate the inclusion set bounds that we have proposed in Theorems \ref{thm:tau} and \ref{thm:tau1}, with some  focus on cases where the matrix $A$ is banded, so that Remark \ref{rem:band} applies.

\subsection{The discrete Laplacian and Jordan block} \label{sec:disLap}

We start with two simple tridiagonal examples, one real symmetric ($A=L_M$), the other non-normal ($A=V_M$), both Toeplitz so that we know the spectrum explicitly. Here $L_M$ and $V_M$, both order $M$, are  a discrete Laplacian and a Jordan block, respectively. Precisely, $L_1=V_1=0$ and, for $M>1$,
$$
L_M :=\ \left(\begin{array}{ccccc}0&1\\
1&0&1\\
&\ddots&\ddots&\ddots\\
&&1&0&1\\
&&&1&0
\end{array}\right)_{M\times M}  
\quad \mbox{and} \quad
V_M :=\ \left(\begin{array}{cccc}0&1\\
&\ddots&\ddots\\
&&0&1\\
&&&0
\end{array}\right)_{M\times M} .
$$
For $M\in \N$, $\Spec V_M=\{0\}$ and (e.g., \cite{BoeGru}) 
\begin{equation} \label{eq:DLS}
\Spec L_M= \{2\cos(j\pi/(M+1)):j\in \{1,\ldots,M\}\}.
\end{equation}

\subsubsection{The Jordan block} \label{sec:jordan}
Let's write down the $\tau$ and $\tau_1$ inclusion sets of \S\ref{sec:intro} for the Jordan block case  $A=V_M$, choosing for simplicity $N=M$, so that the block structure \eqref{eq:block} is trivial, each block a single complex number. These inclusion sets are illustrated, for particular values of $n$ and $\eps$, in Figure \ref{fig:jordan}.
In the case $A=V_N$ Theorem \ref{thm:tau} tells us that, for $\eps\geq 0$ and $n<N$,
\begin{equation} \label{eq:SpepsVN}
\Speps V_N\subset \Sigma_\eps^n(V_N) = \left\{\begin{array}{ll}\bigcup_{m=1}^n  \Specn_{\eps+ \eps_{n}}V_m, & n=1,2,\\
\Specn_{\eps+ \eps_{n-2}} V_n \cap \bigcup_{m=1}^n  \Specn_{\eps+ \eps_{n}}V_m, & n>2,\end{array}\right. 
\end{equation}
where
$$
\eps_n := \eps_n(V_N) = 2\sin(\pi/(4n+2)).
$$

Let $V_1^+ := (1,0,0)^T$, $\widetilde V_1^+ := (0,0,0)^T$, and, for $n>1$, let
$$
V^+_{n} := \begin{pmatrix}
1 & 0&\cdots&0\\\hline
& & V_{n}\\\hline
0& 0 &\cdots&0
\end{pmatrix}_{(n+2)\times n}, \quad \widetilde V^+_{n} := \begin{pmatrix}
0&\cdots&0\\\hline
 & V_{n}\\\hline
 0 &\cdots&0
\end{pmatrix}_{(n+2)\times n}.
$$
Then $\{B^{+}_{n,k}: 0\leq k\leq N-n\} = \{V^+_n,\widetilde V^+_n\}$ so that,
by Theorem \ref{thm:tau1},
\begin{equation} \label{SpepsVN2}
\Speps V_N \subset \Gamma^n_\eps(V_N) = \Specn_{\eps+\eps'_n} V^+_n \cup \Specn_{\eps+\eps'_n} \widetilde V^{+}_n \subset \Specn_{\eps+\eps'_n} V_N, 
\end{equation}
for $\eps\geq 0$ and $1\leq n< N$, where
$$
\eps'_n := \eps'_n(V_N)  = 2\sin(\pi/(2n+2)). 
$$

\begin{figure}[t]
\[
\begin{tabular}{cc}
\includegraphics[width=57mm]{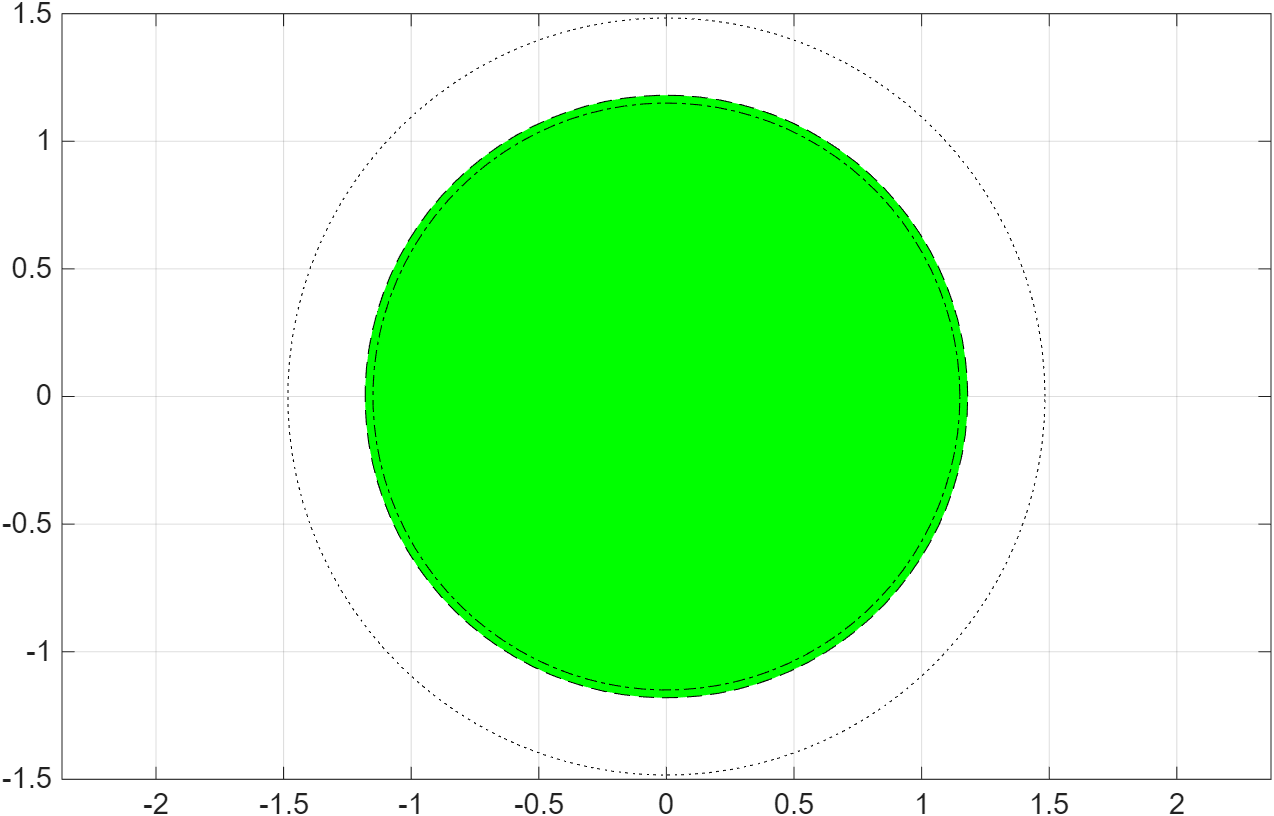} &
\includegraphics[width=57mm]{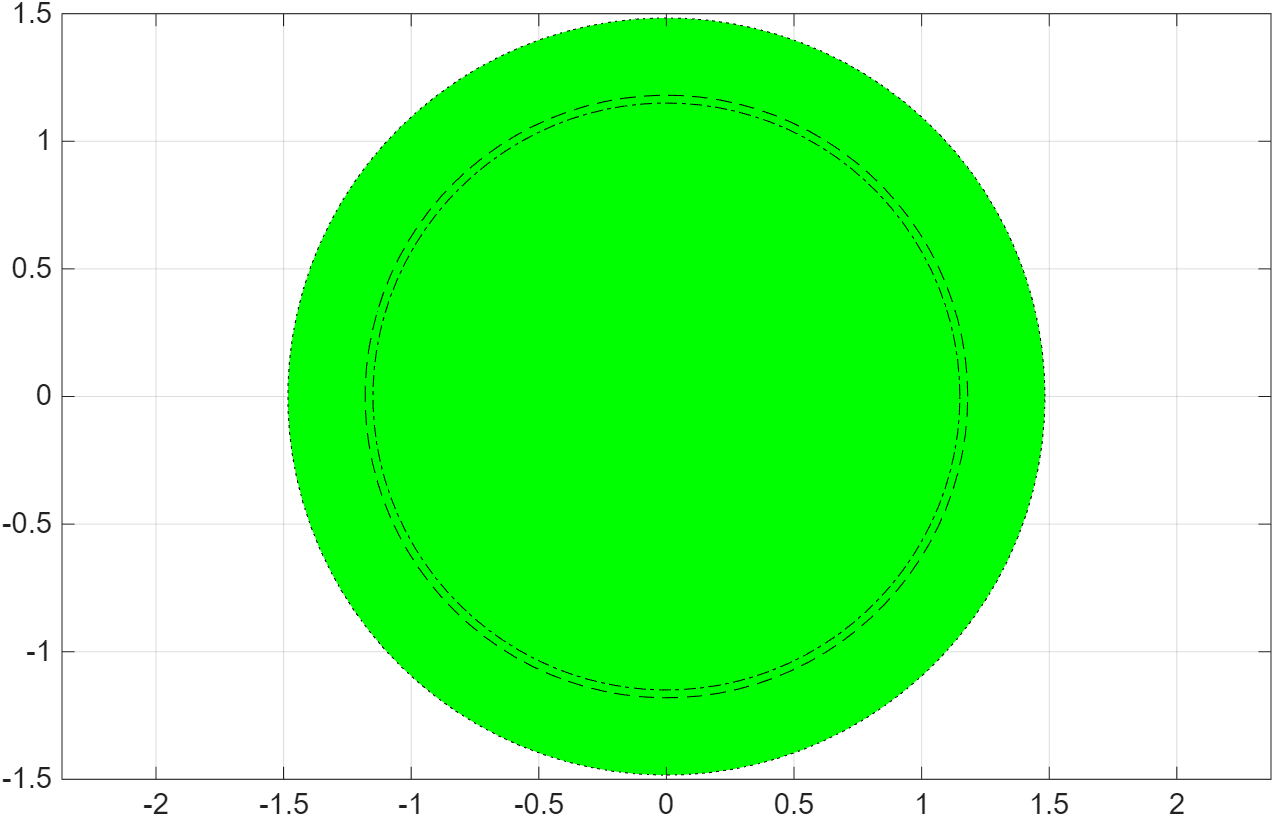}\\
$\Sigma^n_{\eps}(V_N)$ & $\Gamma^n_{\eps}(V_N)$
\end{tabular}
\]

\vspace{-2ex}

\caption{The $\tau$ and $\tau_1$ inclusion sets, $\Sigma^n_{\eps}(V_N)$ and $\Gamma^n_{\eps}(V_N)$, for $n=4$ and $\eps=0.15$.
Each is an inclusion set for $\Speps V_N$ if $N\geq n$. Also shown in each panel are circles of radii $1+\eps=1.15$ ($- .$), $\alpha_n(\eps_n+\eps)\approx 1.18$ ($-\,-$), and $\alpha_n(\eps'_n+\eps)\approx 1.48$ ($\cdots$). The circles of radii $\alpha_n(\eps_n+\eps)$ and $\alpha_n(\eps'_n+\eps)$ are the boundaries of $\Sigma^n_{\eps}(V_N)$ and $\Gamma^n_{\eps}(V_N)$, respectively, by \eqref{eq:inclus2}, \eqref{eq:inclus}, and \eqref{eq:PiCh}. In the supplementary materials we provide Matlab codes to reproduce this figure and Figures \ref{fig:penta} and \ref{fig:Toep2} below, and to produce similar figures for other values of the parameters $n$ and $\eps$.} \label{fig:jordan}
\end{figure}

Our next proposition characterises the inclusion sets \eqref{eq:SpepsVN} and \eqref{SpepsVN2} more precisely, and uses the notations $v_n$ and $v_n^+$, where
$$
v_n(s) := s_{\min}(V_n-sI_n), \qquad v^+_n(s) := s_{\min}(V^+_n-sI^+_n), \qquad \mbox{for } s\geq 0, \quad n\in \N.
$$
It is shown in \cite[\S8.1]{CW.Heng.ML:SpecIncl1} that
$$
s_{\min}(V_n-\lambda I_n) = v_n(|\lambda|) \quad \mbox{and} \quad s_{\min}(V^+_n-\lambda I^+_n) = v^+_n(|\lambda|), \qquad \lambda\in \C,
$$ 
that
$v_n$ is continuous and strictly monotonically increasing on $[0,\infty)$, with $v_n(0)=0$, $v_n(1)=\eps_n$, and $v_n(s)\to+\infty$ as $s\to +\infty$, and that
\begin{equation} \label{eq:vn+}
v_n^+(s) = \sqrt{1+s^2-2s c_n}, \quad s\geq 0, \; \mbox{where} \; c_n := \cos(\pi/(n+1))=1-(\eps'_n)^2/2.
\end{equation}
Noting that $v_n^+$ has a unique minimum on $(0,\infty)$ at $s=c_n$, with  $v_n^+(c_n) = \sin(\pi/(n+1))$, it follows that, for $\eps\geq 0$, 
\begin{equation} \label{eq:chara}
\begin{aligned}
\Speps V^+_n &= \left\{\begin{array}{ll}\emptyset, & 0\leq \eps < \sin(\pi/(n+1)),\\
\{\lambda\in \C:a^-_n(\eps) \leq |\lambda| \leq a^+_n(\eps)\}, & \eps \geq \sin(\pi/(n+1)), \end{array}\right.\\ 
\Speps V_n &= \alpha_n(\eps) \overline{\D},
\end{aligned}
\end{equation}
where 
$$
a^\pm_n(\eps) := \cos(\pi/(n+1))\pm \sqrt{\eps^2-\sin^2(\pi/(n+1))},
$$
are the solutions of $v_n^+(s)=\eps$, and $\alpha_n(\eps)$ is the unique solution of $v_n(s)=\eps$. We will show in the proof of the following proposition that, analogously to \eqref{eq:vn+},
 \begin{equation} \label{eq:vn}
v_n(s) = \sqrt{1+s^2-2s \cos(\phi_n(s))}, \quad s\geq 1, 
\end{equation}
where $\phi_n(s)$ is the unique solution $t$ in the range $[\pi/(2n+1),\pi/(n+1))$ of the equation
\begin{equation} \label{eq:phi}
s\sin((n+1)t)=\sin(nt).
\end{equation}

\begin{proposition} \label{prop:VN}
For $n\in \N$ and $\eps\geq 0$,
\begin{equation} \label{eq:PiCh}
\Specn_{\eps} \widetilde V^{+}_n = \Specn_{\eps}  V_n  = \alpha_n(\eps)\overline{\D}.
\end{equation}
Further, for $n\in \N$, $\alpha_n(\eps) <\alpha_n(\eps')$, for $0\leq \eps<\eps'$, 
\begin{equation} \label{eq:maxan}
\max\{a_n^+(\eps_n+\eps),\alpha_j(\eps_n+\eps)\} <  \alpha_n(\eps_n+\eps),  \quad \mbox{for}\quad \eps\geq 0 \;\mbox{ and }\; 1\leq j<n,
\end{equation}
and 
\begin{equation} \label{eq:bounds}
1+\eps\leq \alpha_n(\eps_n+\eps) \leq 1+\eps+\min\{\eps_n,\sqrt{2\eps_n\eps}\}, \quad \mbox{for}\quad \eps\geq 0.
\end{equation}
Thus, for $N>1$, $n=1,\ldots,N-1$, and $\eps\geq0$,
\begin{eqnarray} \nonumber
(1+\eps)\overline{\D} &\subset &\Sigma_\eps^n(V_N) = \Specn_{\eps+\eps_n} V_n  = \alpha_n(\eps+\eps_n)\overline{\D}\\  \label{eq:inclus2}
&\subset &\left(1+\eps+\min\{\eps_n,\sqrt{2\eps_n\eps}\}\right) \overline{\D}, 
\end{eqnarray}
in particular $\Sigma_0^n(V_N)=\overline{\D}$, and
\begin{equation} \label{eq:inclus}
\Sigma_\eps^n(V_N) \subsetneq \Gamma_\eps^n(V_N) =  \Specn_{\eps+\eps'_n} V_n  = \alpha_n(\eps+\eps'_n)\overline{\D}.
\end{equation}
\end{proposition}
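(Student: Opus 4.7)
The plan is to deduce the various assertions from three ingredients: the already-recalled formula \eqref{eq:vn+} for $v_n^+$, the formula \eqref{eq:vn} for $v_n$ that must be proved along the way, and the observation that $V_n^{\per,t}$ is unitary.

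I would first dispatch \eqref{eq:PiCh}. The matrix $\widetilde V_n^+ - \lambda I_n^+$ is obtained from $V_n - \lambda I_n$ by prepending and appending a zero row, so $\|(\widetilde V_n^+ - \lambda I_n^+)x\| = \|(V_n-\lambda I_n)x\|$ for every $x\in\C^n$; combined with $s_{\min}(V_n-\lambda I_n) = v_n(|\lambda|)$ and the continuity and strict monotonicity of $v_n$ (already recalled), this yields $\Specn_\eps \widetilde V_n^+ = \Specn_\eps V_n = \alpha_n(\eps)\overline{\D}$. The columns of $V_n^{\per,t}$ are the orthonormal vectors $e_2,e_3,\ldots,e_n,\bar t\,e_1$, so $V_n^{\per,t}$ is unitary and hence normal; direct inspection of $V_n^{\per,t} v = \lambda v$ shows that $\Spec V_n^{\per,t}$ equals the set of $n$-th roots of $\bar t$, and normality then gives $\Speps V_n^{\per,t} = \Spec V_n^{\per,t} + \eps\overline{\D}$.

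The technical core is the proof of \eqref{eq:vn}. I would form the Hermitian tridiagonal matrix $M_s := (V_n-sI_n)^H(V_n-sI_n)$, with diagonal $(s^2,1+s^2,\ldots,1+s^2)$ and off-diagonals $-s$, substitute $\mu = 1+s^2-2s\cos t$, and solve the recurrence $(M_s-\mu I)x=0$ with the trigonometric ansatz $x_k=\sin(kt+c)$; the boundary equations at $k=1$ and $k=n$ reduce to the transcendental equation \eqref{eq:phi}, and monotonicity of its left-hand side in $t\in[\pi/(2n+1),\pi/(n+1))$ selects a unique root $\phi_n(s)$, which corresponds to the smallest admissible $\mu$ and hence to $v_n(s)^2$. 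Once \eqref{eq:vn} is established, one has the parallel representation
\begin{equation*}
v_n(s)^2 = (s-1)^2 + 4s\sin^2(\phi_n(s)/2),\quad v_n^+(s)^2 = (s-1)^2 + 4s\sin^2(\pi/(2n+2)),\quad s\geq 1,
\end{equation*}
and the lower bound $\sin(\phi_n(s)/2)\geq \sin(\pi/(4n+2)) = \eps_n/2$.

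The strict monotonicity of $\alpha_n$ is immediate from that of $v_n$. For \eqref{eq:maxan}, the $a_n^+$ part reduces via the two boxed identities to $\phi_n(s)<\pi/(n+1)$ applied at $s = a_n^+(\eps_n+\eps)\geq 1$, while the $\alpha_j$ part uses the embedding bound $v_n\leq v_j$ (obtained by zero-padding $x\in\C^j$ to $(x,0)\in\C^n$), made strict via $\phi_j(s)>\phi_n(s)$ on the common $s$-domain. The bounds \eqref{eq:bounds} then come from the $1$-Lipschitz property of $v_n$ with $v_n(1)=\eps_n$ for the lower bound, and from $v_n(s)^2\geq (s-1)^2+s\eps_n^2$ combined with $\sqrt{a+b}\leq \sqrt a+\sqrt b$ for the upper bound. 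The inclusions \eqref{eq:inclus2} are then bookkeeping: \eqref{eq:maxan} collapses the union in \eqref{eq:SpepsVN} to $\Specn_{\eps+\eps_n}V_n$, and the intersection with $\Specn_{\eps+\eps_{n-2}}V_n$ for $n>2$ is inert because $\eps_{n-2}>\eps_n$. Finally, for \eqref{eq:inclus}, the same $a_n^+<\alpha_n$ comparison with $\eps''_n$ in place of $\eps_n$ (noting $\eps''_n\geq \sin(\pi/(n+1))$, so the first set is non-empty) absorbs $\Specn_{\eps+\eps''_n}V_n^+$ into $\Specn_{\eps+\eps''_n}\widetilde V_n^+ = \alpha_n(\eps+\eps''_n)\overline{\D}$; the strict inclusions come from $\eps_n<\eps''_n<\eps'_n$ and monotonicity of $\alpha_n$; and $\Pi_\eps^{n,t}(V_N)\subset(1+\eps+\eps'_n)\overline{\D}$ follows from $\alpha_n(\delta)\leq 1+\delta$ (a consequence of $v_n(s)\geq s-1$ for $s\geq 1$) together with the trivial containment of disks of radius $\eps+\eps'_n$ about unit-modulus centres in $(1+\eps+\eps'_n)\overline{\D}$. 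The main obstacle is the trigonometric derivation of \eqref{eq:vn}, in particular pinpointing which root of \eqref{eq:phi} in the prescribed range delivers the smallest singular value; the strict comparison $\phi_j>\phi_n$ underlying the $\alpha_j$ part of \eqref{eq:maxan} is a secondary hurdle.
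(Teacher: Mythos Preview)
Your plan matches the paper's proof closely: the same tridiagonal eigenvalue computation for \eqref{eq:vn}, the same Lipschitz-plus-quadratic route to \eqref{eq:bounds}, and the same collapsing of the unions for \eqref{eq:inclus2}--\eqref{eq:inclus}. Two small points where the paper is tighter: for the $a_n^+$ comparison in \eqref{eq:maxan} the paper evaluates $v_n^+$ at $s=\alpha_n(\eps_n+\eps)\geq 1$ rather than $v_n$ at $s=a_n^+(\eps_n+\eps)$ (which need not be $\geq 1$, or even defined, for small $\eps$), and for the ``secondary hurdle'' $\phi_j(s)>\phi_n(s)$ that you flag but do not prove, the paper treats $n$ as a continuous parameter $p$ in \eqref{eq:phi} and obtains $\partial_p\psi(s,p)<0$ by implicit differentiation.
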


We prove this proposition below. But let us first make two observations:
\begin{enumerate}
\item By \eqref{eq:inclus2} and \eqref{eq:inclus}, the $\tau$-method inclusions sets $\Sigma_\eps^n(V_N)$ are, for this example, sharper than the $\tau_1$-method inclusion sets $\Gamma_\eps^n(V_N)$. Both are closed discs centred on the origin, the $\tau_1$ discs having  larger radius, since $\eps_n'>\eps_n$ so that $\alpha_n(\eps+\eps'_n)>\alpha_n(\eps+\eps_n)$.  See Figure \ref{fig:jordan}, which provides an illustration of \eqref{eq:inclus2} and \eqref{eq:inclus} for $n=4$ and $\eps=0.15$.
\item \label{point3} The $\tau$-method inclusion set for $\Spec V_N$ is $\Sigma_0^n(V_N)=\overline{\D}$, the same inclusion set as provided by the Gerschgorin theorem \eqref{eq:ger}.
This is a poor approximation to $\Spec_0 V_N = \Spec V_N=\{0\}$. On the other hand, for every $\eps>0$,  $\Sigma_\eps^n(V_N)$ and $\Gamma_\eps^n(V_N)$ are both good approximations to $\Speps V_N$ if $N$ and $n$ are sufficiently large, in the sense that, where $S_n$ denotes  $\Sigma_\eps^n(V_N)$ or $\Gamma_\eps^n(V_N)$,  the Hausdorff distance between $S_n$ and $\Speps V_N$ (see \eqref{eq:HD} below) satisfies $d_H(S_n,\Speps V_N)\to 0$ as $n,N\to \infty$ with $n< N$.
\end{enumerate}
 
The last sentence of \ref{point3} is the content of Corollary \ref{cor:convVN},  
a special case of general results
for 
Toeplitz matrices, Theorems \ref{thm:Tgen} and \ref{thm:final} below. This corollary (and the later theorems) reference the standard  Hausdorff distance between compact subsets $T_1,T_2 \subset \C$, defined by
\begin{equation} \label{eq:HD}
d_H(T_1,T_2)\ :=\ \max\left\{ \sup_{t_1\in T_1} \dist(t_1,T_2)\,,\, \sup_{t_2\in T_2}\dist(t_2,T_1)\right\}.
\end{equation} 
For a sequence $(T_n)_{n\in \N}$ of compact subsets of $\C$ and a compact $T\subset \C$ we will write $T_n\Hto T$ as $n\to\infty$ if $d_H(T_n,T)\to 0$ as $n\to\infty$. We recall (e.g., \cite{HaRoSi2})  that $d_H(\cdot,\cdot)$ is a metric on the set of compact subsets of $\C$.

\begin{corollary} \label{cor:convVN}
Suppose that $\eps>0$. As $N\to\infty$, $\Speps V_N\Hto (1+\eps)\overline{\D}$. Further, where $S_n$ denotes  $\Sigma_\eps^n(V_N)$ or $\Gamma_\eps^n(V_N)$,  $S_n\Hto (1+\eps)\overline{\D}$, as $n,N\to\infty$ with $n< N$, so that $d_H(S_n,\Speps V_N)\to 0$ as $n,N\to\infty$ with $n<N$. 
\end{corollary}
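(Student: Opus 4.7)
The statement splits naturally into three parts: (i) $\Speps V_N\Hto(1+\eps)\overline{\D}$ as $N\to\infty$; (ii) $S_n\Hto(1+\eps)\overline{\D}$ as $n,N\to\infty$ with $n<N$; and (iii) $d_H(S_n,\Speps V_N)\to 0$. Part (iii) is immediate from (i) and (ii) by the triangle inequality for $d_H$, so the work lies in (i) and (ii).

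Part (ii) is almost entirely a consequence of Proposition~\ref{prop:VN}, which gives the explicit forms $\Sigma_\eps^n(V_N)=\alpha_n(\eps+\eps_n)\overline{\D}$, $\Gamma_\eps^n(V_N)=\alpha_n(\eps+\eps''_n)\overline{\D}$, and $\Pi_\eps^{n,t}(V_N)=\alpha_n(\eps+\eps'_n)\overline{\D}\cup(t^{1/n}\T_n+(\eps+\eps'_n)\overline{\D})$. The bound \eqref{eq:bounds} gives $\alpha_n(\eps+\eps_n)\to 1+\eps$ as $n\to\infty$. Since $\eps_n\leq\eps''_n\leq\eps'_n\to 0$ (elementary from the sine formulas), applying \eqref{eq:bounds} with $\eps$ replaced by $\eps+\eps''_n-\eps_n\geq\eps$ (respectively $\eps+\eps'_n-\eps_n$) yields $\alpha_n(\eps+\eps''_n)\to 1+\eps$ and $\alpha_n(\eps+\eps'_n)\to 1+\eps$ in the same manner. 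The ``circular'' component of $\Pi_\eps^{n,t}(V_N)$ is contained in $\overline{\D}+(\eps+\eps'_n)\overline{\D}\subset(1+\eps+\eps'_n)\overline{\D}$ and, combined with the dominating disk component $\alpha_n(\eps+\eps'_n)\overline{\D}\supset(1+\eps)\overline{\D}$, gives $\Pi_\eps^{n,t}(V_N)\Hto(1+\eps)\overline{\D}$; for $\Sigma_\eps^n$ and $\Gamma_\eps^n$ the convergence is trivial, as these are single closed disks.

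For (i), the ``outer'' bound $\sup_{\lambda\in\Speps V_N}\dist(\lambda,(1+\eps)\overline{\D})\to 0$ follows from Theorem~\ref{thm:tau} with $n=N-1$, combined with the convergence in (ii) (since $\Speps V_N\subset\Sigma_\eps^{N-1}(V_N)$). The ``inner'' bound $\sup_{z\in(1+\eps)\overline{\D}}\dist(z,\Speps V_N)\to 0$ I would prove by first showing pointwise that every $\lambda$ with $|\lambda|<1+\eps$ lies in $\Speps V_N$ for all sufficiently large $N$, and then upgrading this to a uniform statement via a standard finite-$\delta$-net compactness argument on $(1+\eps)\overline{\D}$. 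For the pointwise statement, the test vector $x=(1,\lambda,\ldots,\lambda^{N-1})^T$ satisfies $(V_N-\lambda I)x=-\lambda^N e_N$, which handles $|\lambda|\leq 1$ (as $s_{\min}(V_N-\lambda I)\leq|\lambda|^N/\|x\|\to 0$). For $1<|\lambda|<1+\eps$ I would identify $V_N-\lambda I$ with the finite section $P_N(S^*-\lambda I)P_N$ of the backward shift $S^*$ on $\ell^2(\N)$; because $S^*-\lambda I$ is an invertible upper-triangular Toeplitz operator (symbol $z\mapsto z-\lambda$, analytic and nonvanishing on $\overline{\D}$), and because the product of two upper-triangular Toeplitz matrices truncates commutatively with $P_N$, the finite-section identity $(V_N-\lambda I)^{-1}=P_N(S^*-\lambda I)^{-1}P_N$ holds on $\C^N$. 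A standard strong-convergence argument ($P_N\to I$ strongly) then yields $\|(V_N-\lambda I)^{-1}\|\to\|(S^*-\lambda I)^{-1}\|$, and the classical identity $\|T(g)\|=\|g\|_\infty$ for analytic Toeplitz operators evaluates the limit as $\sup_{|z|\leq 1}|1/(z-\lambda)|=1/(|\lambda|-1)$, which exceeds $1/\eps$ precisely when $|\lambda|<1+\eps$.

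The main technical obstacle is the norm-convergence $\|(V_N-\lambda I)^{-1}\|\to 1/(|\lambda|-1)$ for $1<|\lambda|>1+\eps$ in the last paragraph; once that is established, the remaining steps (the test-vector calculation for $|\lambda|\leq 1$, the compactness upgrade to uniform convergence, the triangle inequality) are entirely routine.
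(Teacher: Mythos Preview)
Your argument is correct, but for part (i) you are working far harder than necessary and overlooking that the paper has already done the work for you. Immediately before Proposition~\ref{prop:VN} the characterisation $\Speps V_n = \alpha_n(\eps)\overline{\D}$ is established (equation \eqref{eq:chara}). Hence $\Speps V_N$ is itself a closed disk of radius $\alpha_N(\eps)$, and once $N$ is large enough that $\eps_N<\eps$ one may write $\eps = \eps_N + (\eps-\eps_N)$ and apply \eqref{eq:bounds} directly (with $\eps$ there replaced by $\eps-\eps_N$) to get
\[
1+\eps-\eps_N\ \le\ \alpha_N(\eps)\ \le\ 1+\eps,
\]
i.e.\ $(1+\eps-\eps_N)\overline{\D}\subset \Speps V_N\subset (1+\eps)\overline{\D}$. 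That is the paper's entire proof of (i): one line, no test vectors, no Toeplitz operator theory, no compactness upgrade. Your route via the backward shift and finite sections is sound (modulo the symbol being $\bar z-\lambda$ rather than $z-\lambda$, which is harmless since $|\bar z-\lambda|=|z-\bar\lambda|$ on $\T$, and a typo ``$1<|\lambda|>1+\eps$'' where you mean $<$), but it re-derives from scratch something already encapsulated in \eqref{eq:chara} and \eqref{eq:bounds}.

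Similarly for part (ii), rather than treating the three sets separately and shifting $\eps$ in \eqref{eq:bounds}, the paper simply invokes the chain of strict inclusions \eqref{eq:inclus2}--\eqref{eq:inclus},
\[
(1+\eps)\overline{\D}\ \subset\ \Sigma_\eps^n(V_N)\ \subsetneq\ \Gamma_\eps^n(V_N)\ \subsetneq\ \Pi_\eps^{n,t}(V_N)\ \subset\ (1+\eps+\eps'_n)\overline{\D},
\]
and lets $\eps'_n\to 0$. Your more granular computations are correct but unnecessary given this sandwich.
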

\begin{proof}
By \eqref{eq:inclus2}, if $N$ is large enough so that $\eps_N<\eps$, then $(1+\eps-\eps_N)\overline{\D} \subset \Speps V_N \subset (1+\eps)\overline{\D}$, so that $\Speps V_N\Hto (1+\eps)\overline{\D}$ as $N\to\infty$. The rest of the corollary follows from \eqref{eq:inclus2} and \eqref{eq:inclus}.
\end{proof}

\begin{proof}[{\bf Proof of Proposition \ref{prop:VN}.}] Let us first show the claimed \eqref{eq:vn}. For $n\in \N$ and $s\geq 0$, $(V_n-sI_n)^H(V_n-sI_n))=(1+s^2)I_n-s(V_n+V_n^T)-O_n$, where $O_n=[o_{ij}]_{i,j=1}^n$ and $o_{ij}=\delta_{1,i}\delta_{1,j}$, $i,j=1,\ldots,n$, so that
\begin{eqnarray*}
[s_{\min}(V_n-sI_n)]^2 &=& \min \Spec((V_n-sI_n)^H(V_n-sI_n))\\
&=&1+s^2-s\min \Spec(-V_n-V_n^T-s^{-1}O_n),
\end{eqnarray*}
for $s>0$. Further, for $s\geq 1$, $\Spec(-V_n-V_n^T-s^{-1}O_n)\subset [-2,2]$ by \eqref{eq:ger}. Thus,  arguing, e.g., as in \cite[\S8.1]{CW.Heng.ML:SpecIncl1}, if $s\geq 1$, $\lambda\in \Spec(-V_n-V_n^T-s^{-1}O_n)$ is an eigenvalue with corresponding eigenvector $(v_n,\ldots,v_1)^T$ if and only if $\lambda= -2\cos(t)$, for some $t\in (0,\pi)$, with $v_j=\sin(jt)$, $j=1,\ldots,n$, and $t$ satisfying \eqref{eq:phi}. It is easy to see that the smallest positive solution of \eqref{eq:phi} is the unique solution in $[\pi/(2n+1),\pi/(n+1))$, so that $1+s^2-s\min \Spec(-V_n-V_n^T-s^{-1}O_n)=[v_n(s)]^2$, and \eqref{eq:vn} follows.
Suppose now that $n\in \N$ and $\eps\geq 0$. It is clear that, for $\lambda\in \C$, $\nu(\widetilde V^+_n-\lambda I_n^+)=\nu(V_n-\lambda I_n)$, so that $\Specn_{\eps} \widetilde V^{+}_n = \Specn_{\eps}  V_n$, by \eqref{eq:munu}, \eqref{eq:Speps}, and \eqref{eq:speps.rect}. Further, $\Speps V_n = \alpha_n(\eps)\overline{\D}$ by \eqref{eq:chara}, so \eqref{eq:PiCh} follows.

For $n\in \N$, $\alpha_n(\eps_1) <\alpha_n(\eps_2)$, for $0\leq \eps_1<\eps_2$, since $v_n$ is strictly monotonic increasing. Let us show that, for $s\geq 1$, $\phi_n(s)>\phi_{n+1}(s)$, for $n\in \N$. This is clear for $s=1$ as $\phi_n(1)= \pi/(2n+1)$, $n\in \N$. To see that this holds also for $s>1$, let $\psi(s,p)$ denote the unique solution $t$ of $s\sin((p+1)t)=\sin(pt)$ in $(\pi/(2p+1),\pi/(p+1))$, for $s>1$, $p\geq 1$, so that $\psi(s,n)=\phi_n(s)$, $n\in \N$. Differentiating this equation with respect to $p$, we see that $\partial_p\psi(s,p) < 0$, for $s>1$, $p\geq 1$, so that $\psi(s,p)$ is strictly decreasing as a function of $p$, and the required result follows. Thus, for $s\geq 1$, $n\in \N$, and $1\leq j<n$, $v_j(s) > v_{n}(s)$, and note that also $v_n^+(s) > v_n(s)$. Since, moreover, $v_n(1)=\eps_n$, $n\in \N$, so that $\alpha_n(\eps_n+\eps) \geq 1$, for $\eps\geq 0$, it follows that $\alpha_j(\eps_n+\eps) < \alpha_n(\eps_n+\eps)$, for $1\leq j<n$ and $\eps\geq 0$, and also $a^+_n(\eps_n+\eps) < \alpha_n(\eps_n+\eps)$, so that \eqref{eq:maxan} holds.

Supose that $n\in \N$ and $\eps\geq 0$. To see \eqref{eq:bounds}, note first that $v_n(s)=\nu(V_n-sI_n)$, for $s\geq 0$, so that$|v_n(1+\eps)-v_n(1)| \leq \eps$, by \eqref{eq:Lipschitz}. Since also $v_n(1)=\eps_n$, it follows that $v_n(1+\eps) \leq \eps_n+\eps$, so $1+\eps\leq \alpha_n(\eps_n+\eps)$. On the other hand, let $s=\alpha_n(\eps_n+\eps)\geq 1$. Then $v_n(s) = \eps_n + \eps$ and, since $\phi_n(s) \geq \pi/(2n+1)$ and $\cos(\pi/(2n+1))=1-\eps_n^2/2$, it follows from \eqref{eq:vn}  that
\begin{eqnarray*}
1+s^2-2s + \eps_n^2 s \leq (\eps_n+\eps)^2,
\end{eqnarray*}
so that, since $\sqrt{a^2+b^2} \leq a+b$, for $a,b\in \R$,
\begin{eqnarray*}
s &\leq& 1-\frac{\eps_n^2}{2} + \sqrt{\frac{\eps_n^4}{4} + 2\eps_n\eps + \eps^2}
 \leq  1 + \sqrt{2\eps_n\eps + \eps^2}\\
& = & 1 + \eps + \frac{2\eps_n\eps}{\eps +  \sqrt{2\eps_n\eps + \eps^2}} \leq  1 + \eps + \min\{\eps_n,\sqrt{2\eps_n\eps}\},
\end{eqnarray*}
establishing \eqref{eq:bounds}.

Since $\alpha_n(\eps_1) <\alpha_n(\eps_2)$, for $0\leq \eps_1<\eps_2$ and $n\in \N$, it follows from \eqref{eq:PiCh} that $\Specn_{\eps+\eps_{n-2}}V_n\cap \Specn_{\eps+\eps_{n}}V_n= \Specn_{\eps+\eps_{n}}V_n$, for $n>2$, and that 
\begin{equation} \label{eq:incluss}
\Specn_{\eps+\eps_{n}}V_n\subsetneq \Specn_{\eps+\eps'_n}V_n. 
\end{equation}
Further, \eqref{eq:maxan} implies that $\Specn_{\eps+\eps_n} V_m \subset \Specn_{\eps+\eps_n} V_n$, for $m=1,\ldots,n$ and that $\Specn_{\eps+\eps'_n} V^+_n \subset \Specn_{\eps+\eps'_n} V_n$. Thus, and by \eqref{eq:SpepsVN}, \eqref{SpepsVN2}, and \eqref{eq:PiCh},
\begin{equation} \label{eq:ff}
\Sigma_\eps^n(V_N) = \Specn_{\eps+\eps_n} V_n \quad \mbox{and} \quad
\Gamma_\eps^n(V_N) =  \Specn_{\eps+\eps'_n} V_n.
\end{equation}
Equation \eqref{eq:inclus2} follows from \eqref{eq:ff}, \eqref{eq:PiCh}, and \eqref{eq:bounds}. The inclusions \eqref{eq:inclus} follow from \eqref{eq:ff}, \eqref{eq:PiCh}, \eqref{eq:incluss}, and \eqref{eq:inclus}.
\end{proof}

\subsubsection{The discrete Laplacian} \label{sec:disLap2}

Let us turn now to the case $A=L_M$, again choosing for simplicity $N=M$ so that the block structure \eqref{eq:block} is trivial.
By Theorem \ref{thm:tau} (cf.~\eqref{eq:SpepsVN}) the $\tau$-method inclusion set is, for $\eps\geq 0$ and $n<N$,
\begin{equation} \label{eq:SpepsLN}
\Speps L_N\subset \Sigma_\eps^n(L_N) = \left\{\begin{array}{ll}\bigcup_{m=1}^n  \Specn_{\eps+ \eps_{n}}L_m, & n=1,2,\\
\Specn_{\eps+ \eps_{n-2}} L_n \cap \bigcup_{m=1}^n  \Specn_{\eps+ \eps_{n}}L_m, &n>2,\end{array}\right. 
\end{equation}
with 
$$
\eps_n := \eps_n(L_N) = 4\sin(\theta_n^{*}/2),
$$
where $\theta_n^{*} = \theta_n(L_N)$ denotes the unique solution in the range $[\pi/(2n+1),\pi/(n+2)]$ of the equation 
$$
8\sin(t/2)\cos((n+1/2)t) +\sin((n-1)t)=0.
$$
Equivalently (see \cite[\S3.2]{CW.Heng.ML:SpecIncl1}), $\theta_n^{*}$ is the unique solution in the range $(\pi/(n+3),\pi/(n+2)]$ of the equation
\begin{equation}\label{equi1s}
2\cos((n+1)t/2) = \cos((n-1)t/2),
\end{equation}
in particular, $\theta^{*}_1=\pi/3$ and $\theta^{*}_2=2\cos^{-1}(\sqrt{7/8})$.
Thus $\eps_1 =2$, $\eps_2=\sqrt{2}$, and
$$
4\sin(\pi/(2n+6)) < \eps_n < 4\sin(\pi/(2n+4)), \qquad n>1,
$$
so that $\eps_n\sim 2\pi/n$ as $n\to\infty$. As noted in Remark \ref{rem:comp}, since each $L_n$ is Hermitian,
\begin{equation} \label{eq:SpepsLn}
\Speps L_n = \Spec L_n + \eps \overline{\D}, \qquad n\in \N, \quad \eps>0,
\end{equation}
with $\Spec L_n$ given by \eqref{eq:DLS}. Note that $\Sigma_0^1(L_N)=\Specn_{\eps_1}L_1=2\overline{\D}$, the same inclusion set G as provided by Gershgorin's theorem \eqref{eq:ger} in the case that $N>2$. 

In this example, in contrast to the Jordan block example (see \eqref{eq:inclus2}),  $\Sigma^n_\eps(A)$, for $n>1$, cannot be characterised as the pseudospectrum of a single submatrix of $A$. 
In particular, for $n>1$, the inclusion set $\Sigma^n_{\eps}(L_N)$ is strictly larger than $\Specn_{\eps_n+\eps} L_n$. But, importantly, $\Specn_{\eps_n+\eps} L_n$ is also an inclusion set for $\Speps L_N$, for every $N\in \N$. This follows by applying Theorem \ref{thm:taubi} to the bi-infinite version of $L_N$, i.e.\ to the operator $E=V+V'$, where $V$ and its adjoint $V'$ are the left and right shift, respectively. To see this, note that $\Spec E=[-2,2]\supset \Spec L_N$ and $\Sigma^n_{\eps}(E) = \Specn_{\eps_n+\eps} L_n$, by \eqref{eq:Speps} and \eqref{eq:sigmaAAlt}, so that Theorem \ref{thm:taubi} implies that 
$\Specn_{\eps_n+\eps} L_n= \Sigma^n_{\eps}(E)\supset [-2,2]+\eps{\overline \D} \supset \Speps L_N$.

This last observation, in particular that $[-2,2]\subset \Sigma_0^n(L_N)$, leads to an explicit characterisation of  $\R\cap \Sigma_0^n(L_N)$, which, since $L_N$ is Hermitian, is also an inclusion set for $\Spec L_N$. Since, for $m\in \N$ and $\eps\geq 0$,
$
\R \cap \Specn_{\eps}L_m = \Spec L_m + [-\eps,\eps], 
$
with $\Spec L_m$ given by \eqref{eq:DLS}, it follows that
\begin{equation} \label{eq:22incl}
[-2,2] \cup (\R \cap \Specn_{\eps}L_m) = [-S_{\max}(m)-\eps,S_{\max}(m)+\eps],
\end{equation}
with $S_{\max}(m):= \max \Spec L_m = - \min \Spec L_m =2\cos(\pi/(m+1))$, so that, by \eqref{eq:SpepsLN},
\begin{equation} \label{eq:RSig}
\Spec L_N \subset \R\cap \Sigma_0^n(L_N) =  [-2-\eps^*_n,2 + \eps^*_n], 
\end{equation}
where, for $n\in \N$, 
\begin{eqnarray*}
\eps^*_n&:=&S_{\max}(n)-2+\eps_n=4(\sin(\theta^*_n/2) - \sin^2(\pi/(2n+2)))\\
&\geq &4(\sin(\pi/(4n+2))  - \sin^2(\pi/(2n+2)))\geq 0,
\end{eqnarray*}
with equality in the above inequalities if and only if $n=1$. Note that $\eps^*_n\sim \eps_n\sim  2\pi/n$ as $n\to\infty$.

Let us turn now to the $\tau_1$ inclusion sets.
Let 
$L_1^+ := (1,0,1)^T$, $\widetilde L_1^+ := (1,0,0)^T$, $\widehat L_1^+ := (0,0,1)^T$, and, for $n>1$, let
\begin{eqnarray*}
L^+_{n} &:=& \begin{pmatrix}
1 & 0&\cdots&0&0\\\hline
& & V_{n}\\\hline
0& 0 &\cdots&0&1
\end{pmatrix}_{(n+2)\times n}, \quad
\widetilde L^+_{n} := \begin{pmatrix}
1 & 0&\cdots&0\\\hline
& & V_{n}\\\hline
0& 0 &\cdots&0
\end{pmatrix}_{(n+2)\times n}, \\
& & \hspace{15ex} \widehat L^+_{n} := \begin{pmatrix}
0&\cdots&0&0\\\hline
 & V_{n}\\\hline
 0 &\cdots&0&1
\end{pmatrix}_{(n+2)\times n}.
\end{eqnarray*}
Then $\{B^{+}_{n,k}: 0\leq k\leq N-n\} = \{\widetilde L^+_n,\widehat L^+_n\}$, if $n=N-1$, $= \{L^+_n,\widetilde L^+_n,\widehat L^+_n\}$, if $n<N-1$. Recall that a permutation of rows of a matrix leaves its singular values unchanged. Thus $s_{\min}(\widetilde L^+_n-\lambda I^+_N) = s_{\min}(\widehat L^+_n-\lambda I^+_N)$, for $\lambda\in \C$, so that $\Speps \widetilde L^+_n = \Speps \widehat L^+_n$, for $\eps \geq 0$. Thus, for $\eps\geq 0$,
by application of  Theorem \ref{thm:tau1},
\begin{equation} \label{SpepsLN2}
\Speps L_N \subset \Gamma^n_\eps(L_N) = \left\{\begin{array}{ll}\Specn_{\eps+\eps'_n} \widetilde L^{+}_n,& n=N-1,\\
\Specn_{\eps+\eps'_n} L^+_n \cup \Specn_{\eps+\eps'_n} \widetilde L^{+}_n, &n<N-1,\end{array}\right. 
\end{equation}
and also $\Gamma^n_\eps(L_N)   \subset \Specn_{\eps+\eps'_n} L_N$, where
$$
\eps'_n := \eps'_n(L_N)  = 4\sin(\pi/(2n+2)). 
$$
We note that, for this example, because $A=L_N$ is Hermitian, the computation of $\Sigma^n_\eps(L_N)$ is straightforward in view of \eqref{eq:SpepsLn}, while determining $\Gamma^n_\eps(L_N)$ requires the computation of pseudospectra of  rectangular matrices.

Equations \eqref{eq:DLS} and \eqref{eq:RSig} suggest that $\Sigma^n_0(L_N)$ is an increasingly good approximation of $\Spec L_N$ as $n$ and $N$ increase. This, and the analogous result for $\Gamma^n_\eps(L_N)$, is confirmed by the following lemma (cf.\ Corollary \ref{cor:convVN}).

\begin{lemma} \label{lem:conv}
Suppose that $\eps \geq 0$. As $N\to\infty$, $\Speps L_N\Hto [-2,2]+\eps\overline{\D}$. Further, where $S_n$ denotes  $\Sigma_\eps^n(L_N)$ or $\Gamma_\eps^n(L_N)$,  $S_n\Hto [-2,2]+\eps\overline{\D}$, as $n,N\to\infty$ with $n< N$, so that $d_H(S_n,\Speps L_N)\to 0$ as $n,N\to\infty$ with $n<N$. 
\end{lemma}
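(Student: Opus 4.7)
The first assertion is immediate from Hermiticity of $L_N$: by \eqref{eq:SpepsLN}, $\Speps L_N = \Spec L_N + \eps\overline{\D}$, while by \eqref{eq:DLS} the spectrum $\Spec L_N$ lies in $[-2,2]$ with mesh $O(1/N)$, so $\Spec L_N \Hto [-2,2]$ as $N\to\infty$; combining this with the elementary bound $d_H(K_1+K,K_2+K)\le d_H(K_1,K_2)$ for compact sets $K_1,K_2,K\subset \C$ yields $\Speps L_N \Hto [-2,2]+\eps\overline{\D}$.

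For the remaining assertions the plan is to sandwich
\[
\Speps L_N\ \subset\ S_n\ \subset\ [-2,2]+(\eps+\delta_n)\overline{\D}
\]
for some $\delta_n\to 0$ as $n\to\infty$. The left inclusion holds by Theorem \ref{thm:tau} or Theorem \ref{thm:tau1}. For the right inclusion when $S_n = \Sigma_\eps^n(L_N)$ I will use that $L_N$ is Toeplitz, so that $B=L_N$, $C=0$, and every $B_{m,k}$ in the definition \eqref{eq:Sigdef} reduces to $L_m$; hence
\[
\Sigma_\eps^n(L_N)\ \subset\ \sigma_\eps^n(L_N)\ =\ \bigcup_{m=1}^n \Specn_{\eps+\eps_n}L_m.
\]
Since each $L_m$ is Hermitian with $\Spec L_m\subset [-2,2]$ (by \eqref{eq:DLS} applied to $L_m$), \eqref{eq:SpepsLN} gives $\Specn_{\eps+\eps_n}L_m = \Spec L_m + (\eps+\eps_n)\overline{\D} \subset [-2,2]+(\eps+\eps_n)\overline{\D}$, so $\delta_n:=\eps_n$ works. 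For $S_n = \Gamma_\eps^n(L_N)$, the right-hand bound in Theorem \ref{thm:tau1} (with $\|C\|=0$) combined with Hermiticity of $L_N$ gives $\Gamma_\eps^n(L_N)\subset \Specn_{\eps+\eps''_n}L_N = \Spec L_N + (\eps+\eps''_n)\overline{\D} \subset [-2,2]+(\eps+\eps''_n)\overline{\D}$, so $\delta_n := \eps''_n$.

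It will then remain to convert the sandwich into Hausdorff convergence. The right inclusion gives $\sup_{s\in S_n}\dist(s,[-2,2]+\eps\overline{\D})\le \delta_n$. Conversely, any $t=x+y$ with $x\in [-2,2]$ and $|y|\le\eps$ admits $\lambda\in\Spec L_N$ with $|\lambda-x|\le d_H(\Spec L_N,[-2,2])$, so that $\lambda+y\in\Speps L_N\subset S_n$ and $\dist(t,S_n)\le d_H(\Spec L_N,[-2,2])$. Therefore
\[
d_H(S_n,[-2,2]+\eps\overline{\D})\ \le\ \max\{\delta_n,\,d_H(\Spec L_N,[-2,2])\}\ \longrightarrow\ 0
\]
as $n,N\to\infty$ with $n<N$, and the triangle inequality for $d_H$ combined with the first part of the lemma then gives $d_H(S_n,\Speps L_N)\to 0$ as well. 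The only point that I expect to require a little care is isolating a single parameter $\delta_n$ uniformly across the finitely many submatrices appearing in the definition of $\Sigma_\eps^n(L_N)$; this is precisely where the Toeplitz structure of $L_N$ is used.
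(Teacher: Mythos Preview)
Your proof is correct and follows essentially the same sandwich argument as the paper: bound $S_n$ below by $\Speps L_N$ (via Theorems \ref{thm:tau}, \ref{thm:tau1}) and above by $[-2,2]+(\eps+\delta_n)\overline{\D}$ with $\delta_n\to 0$, then conclude Hausdorff convergence. The only cosmetic difference is that for $\Sigma_\eps^n(L_N)$ the paper bounds above via $\widehat\sigma^n_\eps(L_N)=\Specn_{\eps+\eps_{n-2}}L_n$ (taking $\delta_n=\eps_{n-2}$), whereas you bound via $\sigma^n_\eps(L_N)=\bigcup_{m=1}^n\Specn_{\eps+\eps_n}L_m$ (taking $\delta_n=\eps_n$); both choices work equally well.
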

\begin{proof} It is clear from \eqref{eq:DLS} that $\Spec L_N \Hto [-2,2]$ as $N\to\infty$, so that $\Speps L_n = \Spec L_N + \eps{\overline \D} \Hto [-2,2]+\eps\overline{\D}$. By \eqref{eq:SpepsLN}, $\Speps L_N \subset \Sigma_\eps^n(L_N) \subset \Specn_{\eps_{n-2}+\eps} L_n = \Spec L_n + (\eps+\eps_{n-2})\overline{\D}$, so that also $\Sigma_\eps^n(L_N)\Hto [-2,2]+\eps\overline{\D}$ as $n,N\to\infty$. Similarly, by the discussion around \eqref{SpepsLN2}, $\Speps L_N \subset \Gamma_\eps^n(L_N) \subset \Specn_{\eps''_n+\eps} L_N$, so also  $\Gamma_\eps^n(L_N)\Hto [-2,2]+\eps\overline{\D}$ as $n,N\to\infty$.
\end{proof}

The above lemma tells us that the inclusion set $\Sigma^n_0(L_N)$ is an increasingly good approximation to $\Spec L_N$ as $n,N\to\infty$, whereas the inclusion set provided by the standard Gershgorin theorem \eqref{eq:ger} is $G=2\overline{\D}$ if $N>2$. Of course, where the matrix is Hermitian so the spectrum is real, it makes sense to use this fact, taking the intersection of inclusion sets with $\R$. While $G$ is a poor approximation to $\Spec L_N$, $\R\cap G = [-2,2]$  is a sharp bound when $N$ is large, by  Lemma \ref{lem:conv}, and, except when $n=1$, sharper than $\R\cap\Sigma_0^n(L_N)$ given by \eqref{eq:RSig}.  Indeed, where $G$ is the Gershgorin inclusion set \eqref{eq:ger}, we will see in Remark \ref{rem:BandG} that $G\cap\R$ is a sharp inclusion set for the eigenvalues of large tridiagonal Hermitian Toeplitz matrices in general; our $\tau$ and $\tau_1$ sets offer no improvement.

Thus the example $L_N$ makes the point that our new inclusion sets need not improve on Gershgorin; that, for some Hermitian matrices, $\R\cap G$ already provides sharp eigenvalue enclosures. But we will see in \S\ref{sec:bandT} that, for large Hermitian Toeplitz matrices in general, $\R\cap G$ overestimates the spectrum while our $\tau$- and $\tau_1$-inclusion sets continue to provide sharp approximations (see Theorem \ref{thm:Herm}, Remark \ref{rem:BandG}, and Theorem \ref{thm:final}). Similarly, we will see  in \S\ref{sec:2T} that, within the class of  tridiagonal matrices, if $L_N$ is perturbed by addition of a real diagonal matrix, $\R\cap G$ can significantly overestimate the spectrum while $\Sigma_0^n(L_N)$ can, again, provide a sharp inclusion set.

\begin{remark}{\bf(Comparison with existing block-matrix Gershgorin)} \label{rem:bmG}
We have just made comparison with the classical Gershgorin theorem \eqref{eq:ger}. But how does our new inclusion set family compare with the existing block matrix versions of the Gershgorin theorem, discussed in Remark \ref{rem:case1}? Suppose that $A=L_M$ for some $M>1$ and write $A$ in the block form \eqref{eq:block} for some $1<N\leq M$, so that  $a_{ij}\in \C^{m_i\times m_j}$, for $i,j=1,\ldots, N$. Let us compute the inclusion set \eqref{eq:salas2} given by the block-matrix version of Gershgorin in the case that the matrix norms are the $2$-norm. It is clear that $A$ is block-tridiagonal, that $a_{ii} = L_{m_i}$, $i=1,\ldots,N$, and  that each non-zero off-diagonal block $a_{ij}$ has a single non-zero entry (taking the value 1), so that $\|a_{i,i+1}\|=\|a_{i+1,i}\|=1$, $i=1,\ldots,N-1$. Thus \eqref{eq:salas2} reads in this case
\begin{eqnarray*}
\Spec L_M\ \subset\ G^{N} &:= &\Specn_1 L_{m_1} \cup \Specn_1 L_{m_N} \cup \bigcup_{k=2}^{N-1} \Specn_2 L_{m_k}\\
&   = &\left(\overline{\D}+(\Spec L_{m_1} \cup \Spec L_{m_N})\right) \cup \left(2\overline{\D} +\bigcup_{k=2}^{N-1} \Spec L_{m_k}\right).
\end{eqnarray*}
This is no better than the inclusion set provided by the standard Gershgorin theorem, which it reduces to if $N=M$. Indeed, if $M>2$ it is immediate from the above and \eqref{eq:DLS} that $[-2,2]\subset G^{N}$. Thus, arguing as we did to get \eqref{eq:RSig}, using \eqref{eq:22incl}, we see that, where $m_- := \max(m_1,m_N)$ and $m_+:= \max_{k=2,\ldots,N-1}m_k$, 
\begin{equation} \label{eq:RcapGN}
\R\cap G^{N} =  [-S_{\max}(m_-)-1,S_{\max}(m_-)+1] \cup [-S_{\max}(m_+)-2,S_{\max}(m_+)+2],
\end{equation}
with the second interval present only for $N>2$. Thus $\R\cap G^{N}$ is strictly larger than $\R\cap G = [-2,2]$ if $m_->2$, or if $N>2$ and $m_+>1$.
  Indeed, as  $M\to\infty$ with $N>2$ and  $m_+\to\infty$, $G^{N}\Hto [-2,2] + 2\overline{\D}$  and $\R\cap G^{N}\Hto [-4,4]$.
\end{remark}

\subsection{General Toeplitz matrices} \label{sec:bandT}

The matrices in \S\ref{sec:disLap} are both examples of tridiagonal Toeplitz matrices. In this section we consider general Toeplitz matrices, i.e., the case $A=A_M$, for some $M>1$, where, for some complex coefficients $(a_j)_{j\in \Z}$,
$$
A_M := \left(\begin{array}{cccc} a_0 &a_{-1} &\ldots & a_{1-M}\\
a_1 & a_0 &\ldots &a_{2-M}\\
\vdots &\vdots & & \vdots\\
a_{M-2} & a_{M-3} & \ldots & a_{-1}\\
a_{M-1} & a_{M-2} & \ldots & a_0
\end{array}\right)_{M\times M}, \quad M\in \N.
$$

We are particularly interested in the case when $M$ is large, seeking to generalise Corollary \ref{cor:convVN} and Lemma \ref{lem:conv} as they relate to the $\tau$ and $\tau_1$ methods. The study of large Toeplitz  matrices and the asymptotics of their spectra and pseudospectra has a substantial literature that we will draw on; see, for example, \cite{ReichelTref} and the monographs \cite{BoeGru,BoeSi2,TrefEmbBook}.

Let $a$ denote the symbol associated to the coefficients $(a_j)_{j\in \Z}$, defined formally by
\begin{equation} \label{eq:sym}
a(t) := \sum_{j\in \Z} a_j t^j, \qquad t\in \T.
\end{equation}
As a substantial illustration of Remark \ref{rem:band}, we will first study, focussing on the $\tau$ method,  the case that, for some $w\in \N$, $a_j=0$ if $|j|>w$, so that $(A_M)_{M>1}$ is a family of banded Toeplitz matrices, each with band-width $\leq w$. In that case the sum \eqref{eq:sym} is finite and $a$ is rational. As our only example in this paper where $A$ does not coincide with its tridiagonal part $B$, so that the remaining part $C=A-B$ is non-zero, we will also consider, focussing on the $\tau_1$ method, the case where infinitely many of the coefficients $a_j$ are non-zero, but $a$ is in the so-called {\emph Wiener algebra} (e.g., \cite[Example 1.5]{BoeSi2}), meaning that
\begin{equation} \label{eq:wiener}
\|a\|_{\mathcal{W}} := \sum_{j\in \Z} |a_j| <\infty.
\end{equation}
In that case the sum \eqref{eq:sym} is absolutely and uniformly convergent, and $a\in C(\T)$, the symbol is continuous. A key result is that if $a$ is in the Wiener algebra then\footnote{This result from \cite{BoeSi2} needs, in fact, only that $a$ is piecewise continuous. The notion of convergence of sets used in \cite[Corollary 3.18]{BoeSi2} coincides with Hausdorff convergence by standard arguments, e.g., \cite[Proposition 3.6]{HaRoSi2}.} \cite[Corollary 3.18]{BoeSi2}, for all $\eps>0$,
\begin{equation} \label{eq:Toep}
\Speps A_M\ \Hto\ \Speps A_\infty^{+}, \qquad \mbox{as} \qquad M\to\infty.
\end{equation}
Here $A_\infty^{+}$ is the infinite Toeplitz matrix given by
$$
A_\infty^{+} := \left(\begin{array}{ccc} a_0 &a_{-1} &\ldots \\
a_1 & a_0 &\ldots\\
\vdots &\vdots & \vdots
\end{array}\right),
$$
which is a bounded operator on $\ell^2(\N)$ for $a$ in the Wiener algebra, with
$$
\|A_\infty^{+}\| \leq \|a\|_{\mathcal{W}}.
$$ 
$\Speps A_\infty^{+}$, the $\eps$-pseudospectrum of $A_\infty^{+}$ as an operator on $\ell^2(\N)$, is characterised in \cite[\S3.6]{BoeSi2}. 

If $A_\infty^{+}$ is self-adjoint, so that $a_{-j}=\bar a_j$, $j\in \Z$, then $A_M$ is Hermitian, so that, for $\eps>0$, $\Speps A_\infty^{+} = \Spec A_\infty^{+} + \eps {\overline \D}$, $\Speps A_M = \Spec A_M + \eps {\overline \D}$, and $d_H(\Spec A_M,\Spec A_\infty^{+}) \leq 2\eps + 
d_H(\Speps A_M,\Speps A_\infty^{+})$. Thus, if $A_\infty^{+}$ is self-adjoint and $a$ is in the Wiener algebra, \eqref{eq:Toep} holds also for $\eps=0$, i.e., $\Spec A_M$ $\Hto$ $\Spec A_\infty^{+}$; further, $a$ is real-valued and (e.g., \cite[Theorem 1.17]{BoeSi2}) $\Spec A_\infty^+ = a(\T)$, i.e.
\begin{equation} \label{eq:SpAinf1}
\Spec A_\infty^{+} = [a_{\min},a_{\max}], \quad \mbox{where} \quad a_{\min}:= \min_{t\in \T} a(t),  \quad a_{\max}:= \max_{t\in \T} a(t).
\end{equation}

\vspace{-3ex}
 
\subsubsection{The banded case} \label{sec:banded}

Suppose that, for some $w\in \N$, $a_j=0$ if $|j|>w$, so that, for each $M>1$, $A_M$ is banded  with band-width $w_M\leq w$.
Let us proceed as in Remark \ref{rem:band}, writing $A=A_M$ in the block form \eqref{eq:block}, with $m_i\geq w_M$, for $i=1,\ldots,N$. Then $A$, written in the block form \eqref{eq:block}, is tridiagonal, coinciding with its tridiagonal part $B$. Further, for $n=1,\ldots, N$ and $k=0,\ldots,N-n$,
$$
B_{n,k} = A_{M_{n,k}}, \qquad \mbox{with} \qquad M_{n,k} := \sum_{i=k+1}^{k+n} m_i,
$$
so that, by Theorem \ref{thm:tau},
\begin{equation} \label{eq:SpepsT}
\Speps A\subset \Sigma_\eps^n(A) = \left\{\begin{array}{ll}\sigma_\eps^n(A), & n=1,2,\\
\sigma_\eps^n(A) \cap \widehat \sigma_\eps^n(A), & n>2,\end{array}\right. 
\end{equation}
for $\eps\geq 0$, where
\begin{eqnarray} \nonumber
\widehat \sigma_\eps^n(A) &:= &
\bigcup_{k=0}^{N-n}  \Specn_{\eps+ \eps_{n-2}(A)}A_{M_{n,k}},\\ \nonumber
\sigma_\eps^n(A) &:=& \bigcup_{k=0}^{N-n} \Specn_{\eps+\eps_n(A)} A_{M_{n,k}} \cup\\ \label{eq:sigT}
& & \hspace{4ex} \bigcup_{m=1}^{n-1} \left(\Specn_{\eps+\eps_n(A)} A_{M_{m,0}}\cup \Specn_{\eps+\eps_n(A)} A_{M_{m,N-m}}\right),
\end{eqnarray}
and  with $\eps_n(A)$ given by \eqref{eq:epsdef} (with $C=0$).

The following result is essentially a corollary of \eqref{eq:Toep}. Its proof uses standard properties of Hausdorff convergence and pseudospectra (see, e.g., \cite[\S1.3, Eqn.~(2.8)]{CW.Heng.ML:SpecIncl1}), namely: i) that, if $L$, $U$, $S$, and $T$ are compact subsets of $\C$ and $L\subset S\subset U$, then 
$$
d_H(S,T) \leq \max\{d_H(L,T),d_H(U,T)\},
$$
in particular $d_H(S,U)\leq d_H(L,U)$; and ii) that if $F\in L(H)$, for some Hilbert space $H$, then, for $\eps\geq 0$, $\Specn_\eta F\Hto \Speps F$ as $\eta \to \eps^+$.   

\begin{theorem} \label{thm:Tgen}
Suppose $\eps>0$ and $A=A_M$ and $m_i\geq w_M$, for $i=1,\ldots,N$. Then $\Speps A$ $\Hto$ $\Speps A_\infty^{+}$ as $M\to\infty$ and $\Sigma_\eps^n(A)\Hto  \Speps A_\infty^{+}$ as $n\to\infty$ (with $M\geq w_M N$ and $N>n$), so that $d_H(\Sigma_\eps^n(A),\Speps A)\to 0$ as  $M,n\to\infty$. If $a_{-j} = \bar a_j$, $j\in \Z$, then these results hold also for $\eps=0$, in particular $d_H(\Sigma_0^n(A),\Spec A)\to 0$ as  $M,n\to\infty$.
\end{theorem}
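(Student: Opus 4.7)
The convergence $\Speps A \Hto \Speps A_\infty$ as $M \to \infty$ is immediate: the assumption $a_j = 0$ for $|j|>w$ makes $a$ a Laurent polynomial, hence in the Wiener algebra, so \eqref{eq:Toep} applies. The second assertion $d_H(\Sigma_\eps^n(A),\Speps A)\to 0$ will then follow by the triangle inequality from the harder claim $\Sigma_\eps^n(A)\Hto \Speps A_\infty$ as $M,n\to\infty$.

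For that, my plan is to apply, for $n>2$, the sandwich $\Speps A \subset \Sigma_\eps^n(A) \subset \widehat\sigma_\eps^n(A)$ from Theorem \ref{thm:tau}, together with property (i) of Hausdorff convergence recalled just before the theorem. Since $\Speps A \Hto \Speps A_\infty$ has just been established, it then suffices to prove $\widehat\sigma_\eps^n(A) \Hto \Speps A_\infty$. Two observations are key here: (a) every submatrix $A_{M_{n,k}}$ appearing in $\widehat\sigma_\eps^n(A)$ is itself a finite Toeplitz section with the same symbol $a$ (because $m_i \geq w_M$), of order $M_{n,k} \geq n w_M \geq n$; and (b) the penalty satisfies $\eps_{n-2}(A) \leq \pi r(A)/n \to 0$ as $n\to\infty$ uniformly in $M$ and in the block decomposition, since $r(A) \leq 2\|A_M\| \leq 2\|a\|_{\mathcal{W}}$ is bounded independently of $M$.

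The crux---and the step I expect to be the main obstacle---is the two-parameter upper inclusion $\widehat\sigma_\eps^n(A) \subset \Speps A_\infty + \delta\,\overline{\D}$, since $\eta_n := \eps + \eps_{n-2}(A)$ depends on $n$ while the $M_{n,k}$ vary independently. I would reduce this to fixed-$\eta$ statements as follows: given $\delta>0$, first choose $\eta>\eps$ small enough that $d_H(\Specn_\eta A_\infty,\Speps A_\infty) < \delta/2$, which is possible by property (ii), that $\Specn_\eta A_\infty \Hto \Speps A_\infty$ as $\eta\to\eps^+$; then for $n$ large enough one has $\eta_n \leq \eta$, so monotonicity of pseudospectra yields $\Specn_{\eta_n} A_{M_{n,k}} \subset \Specn_\eta A_{M_{n,k}}$; and for this \emph{fixed} $\eta$, \eqref{eq:Toep} applied to each Toeplitz matrix $A_{M_{n,k}}$ gives $d_H(\Specn_\eta A_{M_{n,k}},\Specn_\eta A_\infty) < \delta/2$ once $M_{n,k}$ is large, uniformly in $k$ since all these submatrices share the same symbol $a$ and $M_{n,k} \geq n$. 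The complementary lower inclusion $\Speps A_\infty \subset \widehat\sigma_\eps^n(A) + \delta\,\overline{\D}$ is much easier: it follows from $\Speps A \subset \widehat\sigma_\eps^n(A)$ combined with $\Speps A \Hto \Speps A_\infty$ as $M\to\infty$.

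For the Hermitian case $a_{-j}=\bar a_j$, I would observe that each $A_{M_{n,k}}$ is then itself Hermitian (a Toeplitz section of a Hermitian symbol), so $\Specn_{\eps_{n-2}(A)} A_{M_{n,k}} = \Spec A_{M_{n,k}} + \eps_{n-2}(A)\,\overline{\D}$, and $\widehat\sigma_0^n(A)$ reduces to a union of shrinking neighbourhoods of finite Hermitian-Toeplitz spectra. The inequality $d_H(\Spec A_M,\Spec A_\infty) \leq 2\eps + d_H(\Speps A_M,\Speps A_\infty)$ noted in the text, combined with the already-established $\Speps A_M \Hto \Speps A_\infty$ (letting $M\to\infty$ and then $\eps\to 0^+$), yields $\Spec A_{M_{n,k}} \Hto \Spec A_\infty$ as $M_{n,k}\to\infty$ and in particular $\Spec A \Hto \Spec A_\infty$. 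The sandwich argument adapted to $\eps=0$ then delivers $\Sigma_0^n(A) \Hto \Spec A_\infty$, and hence $d_H(\Sigma_0^n(A),\Spec A)\to 0$.
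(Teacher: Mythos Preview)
Your proposal is correct and follows essentially the same route as the paper: the sandwich $\Speps A \subset \Sigma_\eps^n(A) \subset \widehat\sigma_\eps^n(A)$, the uniform bound $\eps_{n-2}(A)\leq 2\pi\|a\|_{\mathcal W}/n$, and the reduction via a fixed auxiliary level $\eta>\eps$ (using property~(ii)) so that \eqref{eq:Toep} can be applied to the Toeplitz sections $A_{M_{n,k}}$ with $M_{n,k}\geq n$. The only cosmetic difference is that the paper packages the upper estimate as a single triangle-inequality chain for $d_H(\Specn_{\eps+\eps_{n-2}(A)}A_K,\Speps A_\infty)$, while you split it into separate ``upper'' and ``lower'' inclusions; and in the Hermitian case the paper simply notes that \eqref{eq:Toep} then holds at $\eps=0$ so the same argument runs verbatim, whereas you spell out the normality consequence more explicitly---both are valid.
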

\begin{proof}
Suppose $\eps>0$. That $\Speps A\Hto \Speps A_\infty^{+}$ as $M\to\infty$ is \eqref{eq:Toep}. To see the rest of the theorem, note that $r_L(A)\leq \|A\|_\infty \leq \|a\|_{\mathcal{W}}$ and $r_U(A)\leq \|a\|_{\mathcal{W}}$, so that $r(A)\leq 2\|a\|_{\mathcal{W}}$ and, by \eqref{eq:epsdef}, 
\begin{equation} \label{eq:epsb}
\eps_{n-2}(A) \leq \frac{2\pi \|a\|_{\mathcal{W}}}{n} \to 0 \quad \mbox{as}\quad n\to\infty.
\end{equation}
Suppose now that $K>1$. Then, for every $\eta>\eps$, provided $n$ is sufficiently large so that $\eps+\eps_{n-2}(A) \leq \eta$,
\begin{eqnarray*}
d_H(\Specn_{\eps+\eps_{n-2}(A)} A_{K},\Speps A_{K}) &\leq& d_H(\Specn_{\eta} A_{K},\Speps A_{K}) \\
& \leq & d_H(\Specn_{\eta} A_{K},\Specn_{\eta} A_\infty^{+})\\
& & \hspace{2ex} +\ d_H(\Specn_{\eta} A_\infty^{+},\Speps A_\infty^{+})\\
& & \hspace{2ex} +\ d_H(\Speps A_\infty^{+},\Speps A_{K}).
\end{eqnarray*}
The second term on the right-hand side of this last expression can be made arbitrarily small by choosing $\eta$ sufficiently close to $\eps$, and, for every $\eta>\eps$, the other terms $\to 0$ as $K\to\infty$ by \eqref{eq:Toep}. Thus, and by \eqref{eq:Toep}, for every $\delta>0$ there exists $n_0,K_0\in \N$ such that
$$
d_H(\Specn_{\eps+\eps_{n-2}(A)} A_{K},\Speps A_\infty^{+}) \leq \delta
$$
if $n> n_0$ and $K>K_0$. Since $M_{n,k}\geq n$, for each $k\in\{0,\ldots,N-n\}$, it follows that $\widehat \sigma_\eps^n(A)$ $\Hto$  $\Speps A_\infty^{+}$ as $n\to\infty$ with $N>n$. Since $\Speps A \subset \Sigma_\eps^n(A) \subset \widehat \sigma_\eps^n(A)$, also $\Sigma_\eps^n(A)\Hto  \Speps A_\infty^{+}$ as $n\to\infty$ with $N>n$.
If $A_\infty^{+}$ is self-adjoint then \eqref{eq:Toep} holds also for $\eps=0$, so the above arguments work also in that case.
\end{proof}

Where $B$ is a square matrix or $B\in L(H)$, for a Hilbert space $H$, let $W(B)$ denote the numerical range of $B$.  Where $\conv(S)$ denotes the convex hull of a set $S\subset \C$, recall that $\conv(\Spec B)\subset \overline{W(B)}$, with equality if $B$ is normal (e.g., \cite{GusRao97}), and note  that $W(PB|_{\widetilde H})\subset W(B)$ if $B\in L(H)$, $\widetilde H\subset H$ is a closed subspace, and $P:H\to\widetilde H$ is orthogonal projection. Thus, and recalling \eqref{eq:SpAinf1}, we see that, for $M$, $M'\in \N$ with $M<M'$,
\begin{equation} \label{eq:conv}
\conv(\Spec A_M) \subset \conv(\Spec A_{M'}) = W(A_{M'}) \subset W(A_\infty^{+}) = \Spec A_\infty^{+},
\end{equation}
 in the Hermitian case $a_{-j} = \bar a_j$, $j\in \Z$, so that 
\begin{equation} \label{eq:inclH}
\Specn_\eps A = \Spec A + \eps \overline{\D} \subset \Spec A^+_\infty + \eps\overline{\D}=\Specn_\eps A^+_\infty, \qquad \eps\geq 0.
\end{equation}
In the Hermitian case it is convenient to introduce
$\rho(a) := \|a-a_0\|_{\mathcal{W}} = 2\sum_{j=1}^\infty |a_j|$,
noting that (cf.~the proof of Theorem \ref{thm:Tgen})
\begin{equation}\label{eq:rAb}
r(A) \leq \rho(a).
\end{equation}
Recall also $\theta_n^*$ introduced above \eqref{equi1s}, so that $\theta_n^*\in (\pi/(n+3),\pi/(n+2]$ is the solution of \eqref{eq:theta} in the Hermitian case that $r_L(A)=r_U(A)=r(A)/2$.
\begin{theorem} \label{thm:Herm}
Suppose $a_{-j} = \bar a_j$, $j\in \Z$, $\eps\geq 0$, $A=A_M$, and $m_i\geq w$, for $i=1,\ldots,N$. Then 
\begin{equation} \label{eq:dHSS}
d_H(\Sigma_\eps^n(A),\Speps A) \leq \Theta_{n,N}\,\rho(a),
\end{equation}
for $1\leq n<N$ and $\eps\geq 0$, where 
$$
\Theta_{n,N} :=  \Theta_n+\Theta_N \; \mbox{and} \; \Theta_m := 2\sin(\theta^*_m/2) \leq 2\sin\left(\frac{\pi}{2m+4}\right)\leq\frac{\pi}{m+2}, \; m\in \N.
$$ 
Further, 
\begin{equation} \label{eq:RcS}
\R\cap \Sigma_0^n(A) \subset [a_{\min}-\Theta_n \rho(a), a_{\max}+\Theta_n\rho(A)]
\end{equation}
and
\begin{equation} \label{eq:Rincl}
\Spec A \subset \R\cap \Sigma_0^n(A) \subset \Spec A + [-\Theta_{n,N}\,\rho(a),\Theta_{n,N}\,\rho(a)].
\end{equation}
\end{theorem}
\begin{proof} Let $A_\infty$ denote the bi-infinite Laurent matrix with symbol $a$, defined as in \cite[\S1.2]{BoeSi2}, and recall that $\Spec A_\infty = a(\T)$ (e.g., \cite[Theorem 1.2]{BoeSi2}), so that, by \eqref{eq:SpAinf1}, 
\begin{equation} \label{eq:AA+}
\Speps A^+_\infty= \Spec A^+_\infty+\eps\overline{\D} = \Spec A_\infty+\eps\overline{\D}=\Speps A_\infty, \quad \eps\geq 0, 
\end{equation}
in this Hermitian case. Write $A_\infty$ in block-tridiagonal form as $A_\infty = [e_{ij}]_{i,j\in \Z}$, choosing the blocks so that $e_{ij}=a_{ij}$, $1\leq i,j\leq N$, and so that $e_{i+N,j+N}=e_{ij}$, $i,j\in \Z$. Then,
where $E_{n,k}$ is as defined below \eqref{eq:rdef2} and $\eta_N(A_\infty)$ by \eqref{eq:wtauinf}, $E_{N,k}=A$, for $k\in \Z$. Thus, and by Theorem \ref{thm:taubi}, $\Specn_\eps A_\infty\subset \Sigma_\eps^N(A_\infty)=\Specn_{\eps+\eta_N(A_\infty)} A$, for $\eps\geq 0$, with $\eta_N(A_\infty)=2r(A_\infty)\sin(\theta^*_N/2)\leq 2\rho(a)\sin(\theta^*_N/2)$ (cf.~\eqref{eq:rAb}). Thus, and by \eqref{eq:inclH} and \eqref{eq:AA+},
\begin{equation} \label{eq:inC1}
\Speps A \subset \Speps A^+_\infty \subset \Specn_{\eps+\eta_N(A_\infty)} A= \Speps A + \eta_N(A_\infty)\overline{\D}, \qquad \eps\geq 0.
\end{equation}
By \eqref{eq:SpepsT}, $\Speps A \subset \Sigma_\eps^n(A) \subset \sigma_\eps^n(A)$. Further, $\sigma_\eps^n(A)\subset$ $\Spec_{\eps+\eps_n(A)} A^+_\infty$ by \eqref{eq:inclH}, where $\eps_n(A) = 2r(A)\sin(\theta^*_n/2)\leq 2\rho(a)\sin(\theta^*_n/2)$, by \eqref{eq:rAb}, so that, applying \eqref{eq:inC1},
$$
\Speps A \subset \Sigma_\eps^n(A) \subset \Speps A + (\eps_n(A)+\eta_N(A_\infty))\overline{\D} \subset \Speps A + \Theta_{n,N}\, \rho(a)\overline{\D},
$$
and \eqref{eq:dHSS} and \eqref{eq:Rincl} follow.
The inclusion \eqref{eq:RcS} holds since, by \eqref{eq:inclH} and \eqref{eq:SpAinf1}, $\Spec A_J \subset [a_{\min},a_{\max}]$ and $\R \cap \Spec_\eps A_J =\Spec A_J + [-\eps,\eps]$, for $\eps\geq 0$ and $J\in \N$. 
\end{proof}

It follows from \cite[Theorem 3.19]{BoeSi2} that $\Speps A_\infty^{+} = (1+\eps)\overline{\D}$ and $\Speps A_\infty^{+} = [-2,2]+\eps \overline{\D}$ for the examples in \S\ref{sec:jordan} and \S\ref{sec:disLap2}, respectively. Thus Theorem \ref{thm:Tgen}, applied with $m_i=1$, $i=1,\ldots, N$, so that $N=M$, implies Corollary \ref{cor:convVN} and Lemma \ref{lem:conv}, as they relate to the $\tau$ method, as special cases.
\vspace{0.7ex}
\begin{remark}{\bf (Approximating $\Speps A$ using the above results)} \label{rem:AppSp}
Given a large Toeplitz matrix $A$ of order $M$ and band-width $w$ one might proceed as follows, informed by the above results, to obtain an inclusion set for $\Speps A$ that is also a good approximation to $\Speps A$. \begin{enumerate}
\item Assuming that $M\geq w^2$, set $N:=\lfloor M/w \rfloor$, the largest integer $\leq M/w$, set $r:=M-wN\in \{0,\ldots,w-1\}$, and note that $N-r\geq 1$.
\item Set $m_i := w$, for $i=1,\ldots,N-r$, $m_i:= w+1$, if $N-r <i\leq N$. Then $\sum_{i=1}^N m_i = M$ and also $nw\leq M_{n,k}\leq n(w+1)$, for $1\leq n<N$, $0\leq k\leq N-n$, so that there are at most $n+1$ different values of $M_{n,k}$. 
\item For $i,j=1,\ldots,N$, let $a_{ij}$  be the appropriate $m_i\times m_j$ submatrix of $A$ such that $A$ is \eqref{eq:block} in block form. Note that this block matrix is tridiagonal by Remark \ref{rem:band}.
\item Define $\Sigma_\eps^n(A)$ by \eqref{eq:SpepsT}, for $n=1,\ldots,N-1$. The observations in the above steps regarding the size of $M_{n,k}$ imply that, no matter how large the value of $M$,  computing $\Sigma_\eps^n(A)$ requires the computation of at most $4n$ pseudospectra of square matrices, and each of these matrices has order $\leq n(w+1)$. Of course,  $\Speps A \subset \Sigma_\eps^n(A)$,  for each $n$. Further, given that $M$ is large, Theorem \ref{thm:Tgen} suggests that, if $\eps>0$, $\Sigma_\eps^n(A)$ will be an increasingly good approximation to $\Speps A$ as $n$ increases. Indeed, if $A$ is Hermitian, this is true also for $\eps=0$ and the error is quantified, for all $\eps\geq 0$, in Theorem \ref{thm:Herm}.
\end{enumerate}
The above algorithm is most straightforward when $M$ is a multiple of the band-width $w$, i.e., $M=Nw$, so that $m_i=w$, $i=1,\ldots,N$. The expression for $\Sigma_\eps^N(A)$ simplifies in that case to
\begin{equation} \label{eq:Sigw}
\Sigma_\eps^n(A) := \Specn_{\eps+ \eps_{n-2}(A)}A_{nw} \cap  \bigcup_{m=1}^{n} \Specn_{\eps+\eps_n(A)} A_{mw},
\end{equation}
with the term  $\Specn_{\eps+ \eps_{n-2}(A)}A_{nw}$ absent for $n=1,2$.
This is an inclusion set for $\Speps A= \Speps A_{Nw}$, for $N>n$. Since $\Speps A_{Nw}\Hto$ $\Speps A_\infty^{+}$ as $N\to\infty$, by \eqref{eq:Toep}, $\Sigma_\eps^n(A)$, defined by \eqref{eq:Sigw}, is also an inclusion set for $\Speps A_\infty^{+}$, i.e.,
\begin{equation} \label{eq:inclInf}
\Speps A_\infty^{+} \subset \Sigma_\eps^n(A), \qquad n\in \N.
\end{equation}
This holds, because it uses \eqref{eq:Toep},  only for  $\eps>0$ in the first instance, but then, by taking the intersection of \eqref{eq:inclInf} over all $\eps>0$, also for $\eps=0$. This in turn implies, in the Hermitian case $a_{-j}=\bar a_j$, $j\in \Z$, that
\begin{equation} \label{eq:Rcap}
\R\cap \Sigma_0^n(A) = [a_{-}(nw)-\eps_n(A), a_+(nw)+\eps_n(A)] \supset [a_{\min},a_{\max}],
\end{equation}
where, for $J\in \N$, $a_-(J) := \min \Spec A_J$ and $a_+(J):= \max \Spec A_J$. For \eqref{eq:inclInf} and \eqref{eq:SpAinf1} imply in the Hermitian case that $[a_{\min},a_{\max}]\subset \Sigma_0^n(A)$, and then \eqref{eq:Rcap} follows by arguing as we did to obtain \eqref{eq:RSig}, using \eqref{eq:conv}.
\end{remark}
\vspace{0.7ex}
\begin{figure}[t]
\begin{center}
\includegraphics[width=80mm]{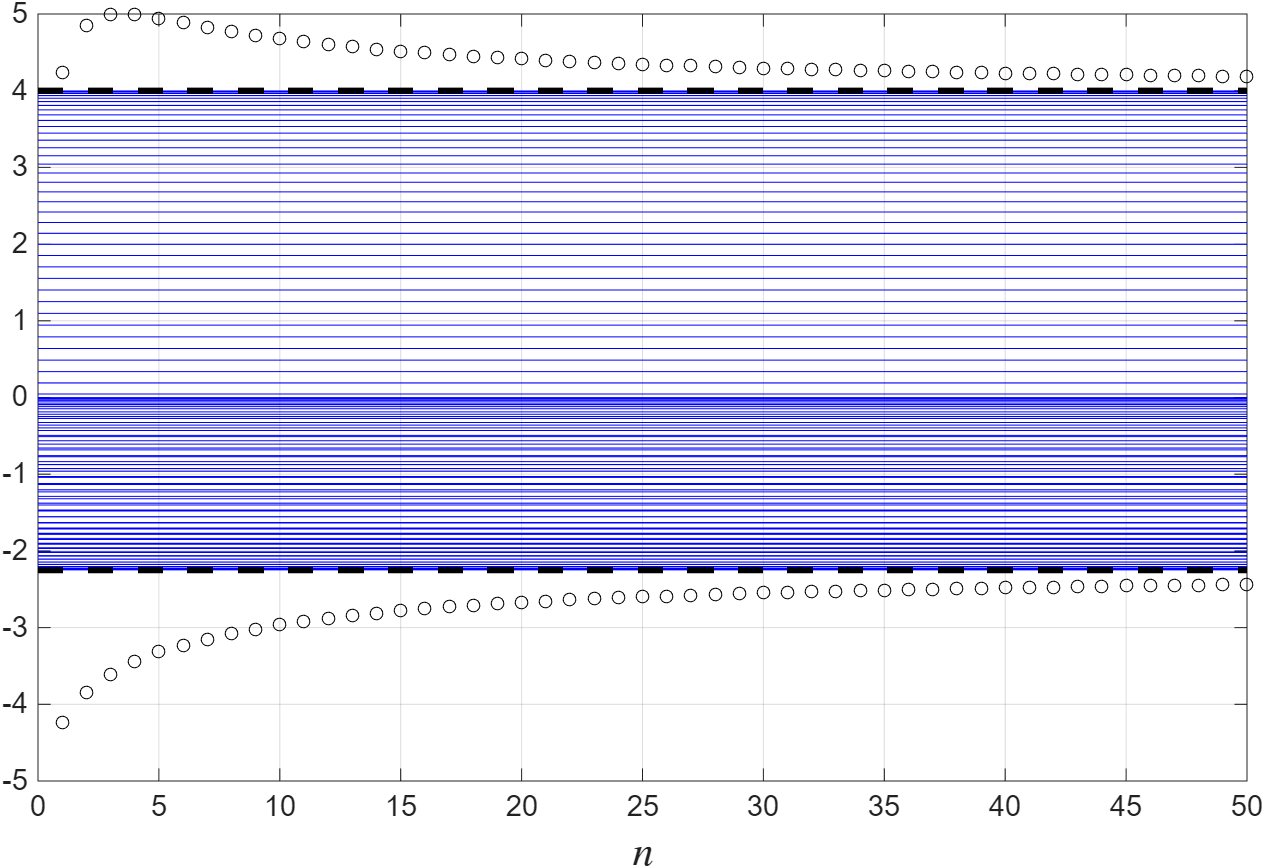} 
\end{center}

\vspace{-3ex}

\caption{Plot of $\Spec A$, where $A=A_M$ is the order $M$ pentadiagonal Toeplitz example at the end of Remark \ref{rem:BandG}. Also plotted are the $\tau$-method inclusions sets, $\R\cap \Sigma_0^n(A)$. The horizontal solid lines indicate the eigenvalues of $A$ when $M=110$, and the dashed lines the endpoints of $\Spec A^+_\infty=[-9/4,4]$. The circles are the upper and lower limits of the interval $\R\cap \Sigma_0^n(A)$, given by \eqref{eq:Rcap}. Where $G$ is the standard Gershgorin inclusion set, $\R\cap G=[-4,4]$; see Remark \ref{rem:BandG}.} \label{fig:penta}
\end{figure}
\begin{remark}{\bf (Gershgorin versus $\tau$ method: Hermitian Toeplitz)} \label{rem:BandG}
As noted in and above \eqref{eq:SpAinf1} and in \eqref{eq:inclH}, $\Specn A \subset \Specn A^+_\infty$ and $\Specn A\Hto \Specn A^+_\infty =[a_{\min}, a_{\max}]$ as $M\to\infty$, so that the eigenvalues of $A$ are an increasingly dense subset of $[a_{\min},a_{\max}]$ as $M$ increases. Thus, and by \eqref{eq:RcS}, as quantified already in \eqref{eq:Rincl}, $\R\cap \Sigma_0^n(A)$ is a sharp inclusion set for $\Spec A$ when $n$ and  $N$ are large. 

Where $G$ is the Gershgorin inclusion set given by \eqref{eq:ger}, we have $G=a_0+\rho(a)\overline{\D}$ if $M>w$, so that
$$
\Specn A \subset \R\cap G = [a_0-\rho(a),a_0+\rho(a)]\supset [a_{\min},a_{\max}].
$$
Thus $\R\cap G$ is a sharp inclusion set, asymptotically in the limit $M\to\infty$, if and only if $[a_0-\rho(a),a_0+\rho(a)]= [a_{\min},a_{\max}]$. This holds if $A$ has only three non-zero diagonals, in particular if $A$ is tridiagonal, and
 in certain other cases. (As an example, suppose that $a_2, a_6>0$, but $a_j=0$ if $j\in \N\setminus \{2,6\}$. Then $a_{\max}=a(0)=a_0+\rho(a)$ and $a_{\min} = a(\ri)=a_0-\rho(a)$.) But generically, when $A$ has more than three non-zero diagonals, $\R\cap G$ is strictly larger than $[a_{\min},a_{\max}]$. 
 
 As a concrete example, suppose that $A$ is pentadiagonal, with $a_0=0$, $a_1=a_2=1$. Then $\R\cap G = [-\rho(a),\rho(a)]=[-4,4]$ while elementary calculations give that 
 $
 [a_{\min},a_{\max}]=[a(-1/4\pm\ri \sqrt{15}/4),a(0)]=[-9/4,4].
 $
 In the case $M=110$ we plot $\Spec A$ and $\R\cap \Sigma_0^n(A)$, given by \eqref{eq:Rcap} with $w=2$, in Figure \ref{fig:penta}. As $n$ increases $\R\cap \Sigma_0^n(A)\Hto [a_{\min}, a_{\max}]$, as predicted. In this example a sharper bound than $\R\cap\Sigma_0^n(A)$ is $\R\cap \Sigma_0^n(A)\cap G$, taking the intersection of our $\tau$-method inclusions sets with classical Gershgorin.
\end{remark}

\subsubsection{The Wiener algebra case}

Suppose now that $a$ is in the Wiener algebra, so that \eqref{eq:wiener} holds. As in the previous subsection we suppose that $A=A_M$, for some $M>1$, and write $A$ in block form as \eqref{eq:block}. In contrast to the previous section $A$ does not, in general, coincide with its tridiagonal part, so its remaining part $C=A-B$ is non-zero. If, for some $w\in \N$, we choose the block form of $A$ so that $m_i\geq w$, $i=1,\ldots, N$, then $C=[c_{ij}]_{i,j=1}^M$ satisfies $c_{ij}=0$ if $|i-j|\leq w$, so that
\begin{equation} \label{eq:Cbound}
\|C\| \leq \max\{\|C\|_1,\|C\|_\infty\} \leq a_w := \sum_{j=w+1}^\infty (|a_j|+|a_{-j}|).
\end{equation}

The argument we made in the banded case to show $\Sigma_\eps^n(A)\Hto \Speps A_\infty^{+}$ does not apply here because $\widehat \sigma_\eps^n(A)$ is no longer a union over $k$ of pseudospectra of Toeplitz matrices. 
Instead, we prove a version of Theorem \ref{thm:Tgen} for the $\tau_1$ inclusion sets $\Gamma_\eps^n(A)\supset \Speps A$ given by \eqref{eq:Gamdef}. Note that the two-sided inclusions for the $\tau_1$ set $\Gamma_\eps^n(A)$, captured in Theorem \ref{thm:tau1}, play a crucial role in the proof.

\begin{theorem} \label{thm:final}
Suppose $\eps>0$ and $A=A_M$. Then $\Speps A$ $\Hto$ $\Speps A_\infty^{+}$ as $M\to\infty$. Further, $\Gamma_\eps^n(A)\Hto  \Speps A_\infty^{+}$ as $n,M,w\to\infty$, where
\begin{equation} \nonumber 
w:= \min_{1\leq i\leq N} m_i.
\end{equation}
Thus $d_H(\Gamma_\eps^n(A),\Speps A)\to 0$ as  $M,n,w\to\infty$.  If $a_{-j} = \bar a_j$, $j\in \Z$, then these results hold also for $\eps=0$, in particular $d_H(\Gamma_0^n(A),\Spec A)\to 0$ as  $M,n,w\to\infty$.
\end{theorem}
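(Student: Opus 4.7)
My plan is to combine the two-sided inclusion from Theorem \ref{thm:tau1} with the Wiener-algebra decay of $\|C\|$ and $r(A)$, then bootstrap the known Hausdorff convergence \eqref{eq:Toep} up to the inclusion set via right-continuity of the pseudospectrum in $\eps$.

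First I would handle the first claim $\Speps A \Hto \Speps A_\infty$ as $M\to\infty$: for $\eps>0$ this is a direct quotation of \eqref{eq:Toep}, and in the Hermitian case $a_{-j}=\bar a_j$ it extends to $\eps=0$ by the remark immediately following \eqref{eq:Toep}. For the main claim I would write down the sandwich from Theorem \ref{thm:tau1}, namely
\begin{equation*}
\Speps A \ \subset\ \Gamma_\eps^n(A) \ \subset\ \Specn_{\eta_{n,w}} A, \qquad \eta_{n,w} := \eps + \eps''_n(A) + 2\|C\|,
\end{equation*}
and then show that $\eta_{n,w}\to\eps$ as $n,w\to\infty$. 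For this, note first that $r(A)\leq r_L(A)+r_U(A)\leq 2\|a\|_{\mathcal W}$ because each block is a submatrix of $A$ and $\|A\|\leq\|a\|_{\mathcal W}$, so $\eps''_n(A)\leq \pi r(A)/(n+1) \to 0$ as $n\to\infty$; and by \eqref{eq:Cbound}, $\|C\|\leq a_w=\sum_{|j|>w}|a_j|\to 0$ as $w\to\infty$ by absolute convergence of the Wiener series.

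Next I would use property (i) from the proof of Theorem \ref{thm:Tgen}: if $L\subset S\subset U$ then $d_H(S,T)\leq \max(d_H(L,T),d_H(U,T))$ for any compact $T$. Fix any $\eta>\eps$; for all $n,w$ large enough to make $\eta_{n,w}\leq\eta$, the sandwich gives $\Speps A \subset \Gamma_\eps^n(A)\subset \Specn_\eta A$, hence
\begin{equation*}
d_H(\Gamma_\eps^n(A),\Speps A_\infty)\ \leq\ \max\bigl(d_H(\Speps A,\Speps A_\infty),\; d_H(\Specn_\eta A,\Speps A_\infty)\bigr).
\end{equation*}
The first term on the right tends to $0$ as $M\to\infty$ by \eqref{eq:Toep}, and the second is bounded via the triangle inequality by $d_H(\Specn_\eta A,\Specn_\eta A_\infty)+d_H(\Specn_\eta A_\infty,\Speps A_\infty)$, whose first summand also tends to $0$ as $M\to\infty$ by \eqref{eq:Toep} applied at level $\eta$. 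Thus $\limsup_{n,M,w\to\infty} d_H(\Gamma_\eps^n(A),\Speps A_\infty)\leq d_H(\Specn_\eta A_\infty,\Speps A_\infty)$, and letting $\eta\to\eps^+$ the right-hand side vanishes by property (ii) from that same proof (right-continuity of the pseudospectrum on a Hilbert space). The Hermitian/self-adjoint case with $\eps=0$ follows by the same argument because \eqref{eq:Toep} is available at $\eps=0$ there. The statement $d_H(\Gamma_\eps^n(A),\Speps A)\to 0$ is then immediate from the triangle inequality in the metric $d_H$.

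The main subtlety, rather than any real obstacle, is keeping track of which parameter controls which error: $n$ controls $\eps''_n(A)$, $w$ controls $\|C\|$, and only $M$ drives the Böttcher--Silbermann convergence \eqref{eq:Toep}. The argument must therefore first fix $\eta>\eps$ (so that $\eta_{n,w}\leq\eta$ for all sufficiently large $n,w$), then invoke \eqref{eq:Toep} at level $\eta$ as $M\to\infty$, and only finally let $\eta\to\eps^+$; this ordering sidesteps any need to compare pseudospectra at the moving level $\eta_{n,w}$ directly with $\Speps A_\infty$.
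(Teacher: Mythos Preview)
Your proof is correct and follows essentially the same route as the paper's: the two-sided inclusion of Theorem~\ref{thm:tau1}, the Wiener-algebra bounds on $r(A)$ and $\|C\|$, and the sandwich/right-continuity argument lifted from the proof of Theorem~\ref{thm:Tgen}. One minor slip: by \eqref{eq:epstau1def} the quantity $\eps''_n(A)$ already contains a $+\|C\|$ term, so your bound should read $\eps''_n(A)\leq \pi r(A)/(n+1)+\|C\|$ and hence $\eps''_n(A)\to 0$ requires $n,w\to\infty$ jointly (not $n\to\infty$ alone); this is harmless since you are sending $w\to\infty$ anyway and your $\eta_{n,w}\to\eps$ conclusion stands.
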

\begin{proof} Suppose that $\eps>0$. Again, the first result is just \eqref{eq:Toep}. Analogously to \eqref{eq:epsb}, we have by \eqref{eq:epstau1def} that
$$
\eps''_{n}(A) \leq \frac{2\pi \|a\|_{\mathcal{W}}}{n+1} + \|C\|,
$$
so that $\eps''_{n}(A) \to 0$ as $n,w\to\infty$, by \eqref{eq:Cbound}.
By Theorem \ref{thm:tau1}, 
$$\Speps A \subset \Gamma_{\eps}^n(A) \subset \Specn_{\eps+\eps'_n(A) + 2\|C\|} A.$$ 
Thus, arguing as in the proof of Theorem \ref{thm:Tgen}, using \eqref{eq:Toep} and \eqref{eq:Cbound}, we see that, as $n,M,w\to\infty$, $\Specn_{\eps+\eps'_n(A) + 2\|C\|} A\Hto \Speps A_\infty^{+}$  so that also $\Gamma_\eps^n(A)\Hto  \Speps A_\infty^{+}$.
If $A_\infty^{+}$ is self-adjoint then \eqref{eq:Toep} holds also for $\eps=0$, so the above arguments work also in that case. 
\end{proof}

\vspace{-2ex}

\subsection{A non-Toeplitz example} \label{sec:2T}
We finish by applying our $\tau$ method to a tridiagonal Hermitian matrix $A$ that is a perturbation, by a real diagonal matrix $D_M= \diag(d_1,\ldots,d_M)$, of the discrete Laplacian $L_M$ introduced in \S\ref{sec:disLap}, i.e., $A=D_M+L_M$, for some $M\in \N$. We choose $N=M$ so that the block structure \eqref{eq:block}, used in the construction of our $\tau$ method inclusion set $\Sigma_0^n(A)$, is trivial, each block a single complex number, as in \S\ref{sec:disLap2}. To obtain an example that has features beyond the Toeplitz case, but is simple enough that everything is known explicitly, we assume that the diagonal is $2$-periodic, i.e., $d_{i+2}=d_i$, $i=1,\ldots,M-2$, so that $A$ is $r$-Toeplitz with $r=2$ in the sense of \cite{GoverBarnett85}, i.e., $a_{i+2,j+2}=a_{ij}$, $i,j=1,\ldots,M-2$. In the case that $M$ is even, this means that $A$ is block Toeplitz, with $2\times 2$ blocks. Noting that 
$$
\Spec(\lambda I + A) = \lambda + \Specn A \quad  \mbox{and} \quad \Sigma_0^n(\lambda I + A) = \lambda + \Sigma_0^n(A), \quad n=1,\ldots, N-1,
$$ 
without loss of generality we focus on the case that, for some $\Delta\in \R$, 
$$
a_{ii}=d_i = (-1)^{i-1}\Delta, \quad i=1,\ldots,M,
$$
denoting the matrix $A$ in that case by $A_M(\Delta)$ when we want to make explicit the dependence on $M$ and $\Delta$.

The study of the eigenvalues of tridiagonal $2$-Toeplitz matrices was initiated in \cite{Gover94}. An explicit formula, for the effect of a 2-periodic diagonal perturbation on the eigenvalues of any tridiagonal matrix with zero main diagonal, was presented recently in \cite{DyachenkoTyaglov24}. To apply the results of \cite{DyachenkoTyaglov24} we note that the eigenvalues of $L_M$ are given by \eqref{eq:DLS} and, for $M\in \N$ with $M\geq 2$, let $M^+:=\lfloor M/2\rfloor$,
$$
\Specn^+ L_M := \{\lambda\in \Spec L_M:\lambda>0\}=\{2 \cos(j\pi/(
M+ 1)) : j \in \{1, \ldots, M^+\}\},
$$
and
\begin{equation} \label{eq:SN+}
S_M^+(\Delta):= \{\sqrt{\lambda^2+\Delta^2}:\lambda\in \mathrm{Spec}^+L_M \}\subset \left(|\Delta|,\sqrt{4+\Delta^2}\right).
\end{equation}
Then  \cite[Theorem 3.1]{DyachenkoTyaglov24} gives that, for $\Delta\in \R$ and $M\in \N$, with $S^+_1(\Delta):= \emptyset$,
\begin{equation} \label{eq:spec2p}
\Spec A = \Spec A_M(\Delta) = \left\{\begin{array}{ll}-S^+_M(\Delta) \cup S^+_M(\Delta), & \mbox{if $M$ is even},\\
-S^+_M(\Delta)\cup S^+_M(\Delta) \cup\{\Delta\}, & \mbox{if $M$ is odd}.\end{array}\right.
\end{equation}

It is informative to relate $\Spec A$ to the spectra of the block Laurent matrix $A_\infty(\Delta)=[a_{ij}]_{i,j\in \Z}$ and the block Toeplitz matrix $A^+_\infty(\Delta)=[a_{ij}]_{i,j\in \N}$, where $a_{ij}:=0$ if $|i-j|>1$, $:=1$ if $|i-j|=1$, and $a_{ii} := (-1)^{i-1}\Delta$, $i\in \Z$.  $A_\infty(\Delta)$ and $A^+_\infty(\Delta)$ are examples of discrete Schr\"odinger operators, so that $A_N(\Delta)$ is a finite section of a discrete Schr\"odinger operator, e.g.~\cite{Gabeletal23}. We have (see \cite[Examples 3.5(c), 3.10(a)]{Gabeletal23}) that, for $\Delta\in \R$,
\begin{equation} \label{eq:SpAinf}
\Spec A^+_\infty(\Delta)=\Spec A_\infty(\Delta) = \left[-\sqrt{4+\Delta^2},-|\Delta|\right]\cup\left[|\Delta|,\sqrt{4+\Delta^2}\right].
\end{equation}
Thus, for $\Delta\in \R$,  $\Spec A_M(\Delta) \subset \Spec A^+_\infty(\Delta)$, for $M\in \N$, and $\Spec A_M(\Delta)\Hto$ $\Spec A^+_\infty(\Delta)$ as $M\to\infty$ (cf.~Lemma \ref{lem:conv}, for the special case $\Delta=0$).

\subsubsection{The $\tau$ method inclusion sets} \label{sec:tauToep2} Let us write down the $\tau$ method inclusion sets, as defined in \S\ref{sec:tau}, for this case. For $n=1,\ldots,N-1$ and $k=0,\ldots,N-n$, we have that $B_{n,k} = A_n(\Delta)$ if $k$ is even, $=A_n(-\Delta)$ if $k$ is odd. Thus, for $N>n\geq 3$ and $\eps\geq 0$,
$$
\widehat \sigma^n_{\varepsilon}(A) = \Specn_{\varepsilon+\varepsilon_{n-2}} A_n(\Delta) \cup \Specn_{\varepsilon+\varepsilon_{n-2}} A_n(-\Delta),
$$
where  $\theta_n^*$ and $\eps_n := 4\sin(\theta_n^{*}/2)$ are defined as in the discrete Laplacian case of \S\ref{sec:disLap2}.
Since $A_n(\pm \Delta)$ are Hermitian, $\Specn_{\eps} A_n(\pm \Delta) = \Spec A_n(\pm \Delta) +\eps\overline{\D}$, 
for $n\in \N$ and $\eps\geq 0$,
and note that, by \eqref{eq:spec2p},
$$
\mathrm{Spec}\, A_n(\Delta) \cup \mathrm{Spec}\, A_n(-\Delta) = \left\{\begin{array}{ll}-S^+_n(\Delta) \cup S^+_n(\Delta), & \mbox{if $n$ is even},\\
-S^+_n(\Delta)\cup S^+_n(\Delta) \cup\{-\Delta,\Delta\}, & \mbox{if $n$ is odd},\end{array}\right.
$$
so that 
\begin{equation} \label{eq:wsig}
\widehat \sigma^n_{\varepsilon}(A) = \left\{\begin{array}{ll}-S^+_n(\Delta) \cup S^+_n(\Delta)+ (\eps+\eps_{n-2})\overline{\D}, & \mbox{if $n$ is even},\\
-S^+_n(\Delta)\cup S^+_n(\Delta) \cup\{-\Delta,\Delta\} + (\eps+\eps_{n-2})\overline{\D}, & \mbox{if $n$ is odd}.\end{array}\right.
\end{equation}
Similarly, for $N>n\geq 1$ and $\eps\geq 0$,
\begin{equation} \label{eq:sig2p}
\sigma^n_{\varepsilon}(A) = \left\{\hspace*{-1ex}\begin{array}{ll} \{\Delta\} \cup \bigcup_{m=1}^n\left(-S^+_m(\Delta) \cup S^+_m(\Delta)\right)+ (\eps+\eps_{n})\overline{\D}, & \mbox{\hspace{-1ex}$n$ even and $N$ odd},\\
 \{-\Delta,\Delta\} \cup \bigcup_{m=1}^n\left(-S^+_m(\Delta) \cup S^+_m(\Delta)\right)+ (\eps+\eps_{n})\overline{\D}, & \mbox{\hspace{-1ex}otherwise.}
\end{array}\right.
\end{equation}
Recall that, for $\eps\geq 0$, the $\tau$-method inclusion sets for $\Speps A$ are $\Sigma_\eps^n(A)$, where $\Sigma_\eps^n(A) := \sigma^n_{\varepsilon}(A)$, for $n=1,2$, $:= \widehat \sigma^n_{\varepsilon}(A) \cap \sigma^n_{\varepsilon}(A)$, for $n>2$.

Similarly to \eqref{eq:wsig}, it follows from Theorem \ref{thm:taubi} that 
$$
\Speps A_\infty \subset \Sigma_\eps^n(A_\infty) = \left\{\begin{array}{ll}-S^+_n(\Delta) \cup S^+_n(\Delta)+ (\eps+\eps_{n})\overline{\D}, & \mbox{$n$ even},\\
-S^+_n(\Delta)\cup S^+_n(\Delta) \cup\{-\Delta,\Delta\} + (\eps+\eps_{n})\overline{\D}, & \mbox{$n$ odd}.\end{array}\right.
$$
Note that $\Sigma_\eps^n(A_\infty)\subset \Sigma_\eps^n(A)$, for $\eps\geq 0$ and $N>n\geq 2$, so that, recalling the characterisation above of $\Spec A_\infty$ and \eqref{eq:SN+},
$$
-S^+_m(\Delta) \cup S^+_m(\Delta) \subset \left[-\sqrt{4+\Delta^2},-|\Delta|\right]\cup\left[|\Delta|,\sqrt{4+\Delta^2}\right] \subset \Sigma_0^n(A),
$$
for $1\leq m\leq n$. From these inclusions, arguing as in the proof of \eqref{eq:RSig}, we get that, for $N>n\geq 1$ and $\Delta\in \R$,
\begin{equation} \label{eq:Rcap2p}
\R \cap \Sigma_0^n(A) = -\widehat S^+_{n,N}(-\Delta) \cup \widehat S^+_{n,N}(\Delta),
\end{equation}
where
$$
\widehat S^+_{n,N}(\Delta) := \left[c^-_{n,N}(\Delta),c^+_n(\Delta)\right] \supset \left[|\Delta|,\sqrt{4+\Delta^2}\right],
$$
with $c_1^+(\Delta):= |\Delta|+\eps_1$, $c_n^+(\Delta) := \max S_n^+(|\Delta|) +\eps_n$, for $n\geq 2$, and
$$
c_{n,N}^-(\Delta)  :=  \left\{\hspace{-1ex}\begin{array}{ll} |\Delta|-\eps_n, & \hspace{-1ex}\mbox{if $n$ is odd or}\\ & \hspace{-1ex}\mbox{$n=2$ and ($N$ even or $\Delta>0$),}\\ 
\min S^+_n(|\Delta|)-\eps_{n}, & \hspace{-1ex}\mbox{if $n$ even, $N$ odd, and $\Delta\leq 0$,}\\
\max(\min S^+_n(|\Delta|)-\eps_{n-2},|\Delta|-\eps_n), & \hspace{-1ex}\mbox{otherwise.}\\
\end{array}\right.
$$
Note that
$\max S_n^+(\Delta)= \sqrt{4\cos^2(\pi/(n+1))+\Delta^2}$  and, in the case that $n$ is even, $\min S^+_n(\Delta)= \sqrt{4\cos^2(n\pi/(2n+2))+\Delta^2}$, so that, as $n\to\infty$,
\begin{equation} \label{eq:cnasymp}
c_n^+(\Delta) = \sqrt{4+\Delta^2} + \frac{2\pi}{n} + O(n^{-2}), \quad c_{n,N}^-(\Delta) = |\Delta| - \frac{2\pi}{n} + O(n^{-2}).
\end{equation}

\vspace{-2ex}

\subsubsection{Comparison with Gershgorin} \label{sec:T2G} $\R\cap \Sigma_0^n(A)$, given by \eqref{eq:Rcap2p}, is the family of $\tau$-method inclusion sets. Since, as discussed above, $\Spec A \Hto -\widehat S^+_\infty(|\Delta|)\cup \widehat S^+_\infty(|\Delta|)$ as $M\to\infty$, where 
$$
\widehat S^+_\infty(\Delta) := [\Delta, \sqrt{4+\Delta^2}], \qquad \Delta \geq 0,
$$ 
the above asymptotics for $c_n^+(\Delta)$ and $c_{n,N}^-(\Delta)$ make clear that $\R\cap \Sigma_0^n(A)$ is a sharp inclusion set for $\Spec A$ in the limit $M,n\to\infty$. By contrast, where $G$ is the Gershgorin theorem inclusion set  \eqref{eq:ger},
\begin{equation} \label{eq:ger2p}
\R\cap G = \{-\Delta,\Delta\} + [-2,2],
\end{equation}
if $M\geq 3$.
This is a sharp bound for $\Spec A$ only in the discrete Laplacian case $\Delta=0$. In particular, $\Spec A \cap (-|\Delta|,|\Delta|)=\emptyset$, for $\Delta\neq 0$, while $(-|\Delta|,|\Delta|)\subset \R\cap G$ for $|\Delta|\leq 2$, $(-|\Delta|+2] \cup [-2+|\Delta|)\subset \R\cap G$ for $|\Delta|\geq 2$.

As we did in Remark \ref{rem:bmG} for $\Delta=0$, let us make comparison also with the block matrix Gershgorin theorem, discussed in Remark \ref{rem:case1}. Recall that $A=A_M(\Delta)$, for some $M>1$ and $\Delta\in \R$, and write $A$ in the block form \eqref{eq:block} for some $1<N\leq M$, so that  $a_{ij}\in \C^{m_i\times m_j}$, for $i,j=1,\ldots, N$. As in Remark \ref{rem:bmG}, we compute the inclusion set \eqref{eq:salas2} given by the block-matrix Gershgorin when the matrix norms are the $2$-norm. $A$ is block-tridiagonal, with $a_{ii} = A_{m_i}((-1)^{r_i}\Delta)$, for $i=1,\ldots,N$, where $r_1=0$ and $r_i= \sum_{j=1}^{i-1}m_j$, $i>1$. Each non-zero off-diagonal block $a_{ij}$ has a single non-zero entry with value 1, so $\|a_{i,i+1}\|=\|a_{i+1,i}\|=1$, $i=1,\ldots,N-1$. Thus \eqref{eq:salas2} reads in this case (cf.~Remark \ref{rem:bmG})
\begin{eqnarray*}
\Spec A\ \subset\ G^{N} 
&   = &\left(\overline{\D}+(\Spec a_{1,1} \cup \Spec a_{N,N})\right) \cup \left(2\overline{\D} +\bigcup_{k=2}^{N-1} \Spec a_{k,k}\right).
\end{eqnarray*}
As noted for the case $\Delta=0$ in Remark \ref{rem:bmG}, this seems no better than the inclusion set $G$ provided by the standard Gershgorin theorem, which it reduces to if $N=M$. In particular, as  $M\to\infty$ with $N>2$ and  $\min_{1\leq k\leq N} m_k\to\infty$, it is clear from \eqref{eq:SN+} and \eqref{eq:spec2p} that 
$$
G^{N}\Hto -\widehat S^+_\infty(|\Delta|)\cup \widehat S^+_\infty(|\Delta|) + 2\overline{\D} \quad \mbox{so} \quad \R\cap G^{N}\Hto -\widehat S^+_\infty(|\Delta|)\cup \widehat S^+_\infty(|\Delta|) + [-2,2].
$$
For example, if $\Delta=1.5$, $\Spec A \Hto [-2.5,-1.5]\cup [1.5,2.5]$ as $M\to\infty$, the $\tau$ method inclusion set $\Sigma_0^n(A)$ tends to the same limit as $n\to\infty$, while the $\min_{1\leq k\leq N} m_k\to\infty$ limit of the block-Gershgorin inclusion set is $-\widehat S^+_\infty(|\Delta|)\cup \widehat S^+_\infty(|\Delta|) + [-2,2]=[-4.5,4.5]$ and the standard Gershgorin inclusion is
$\R\cap G = [-3.5,3.5]$.
\begin{figure}[t]
\begin{center}
\includegraphics[width=97mm]{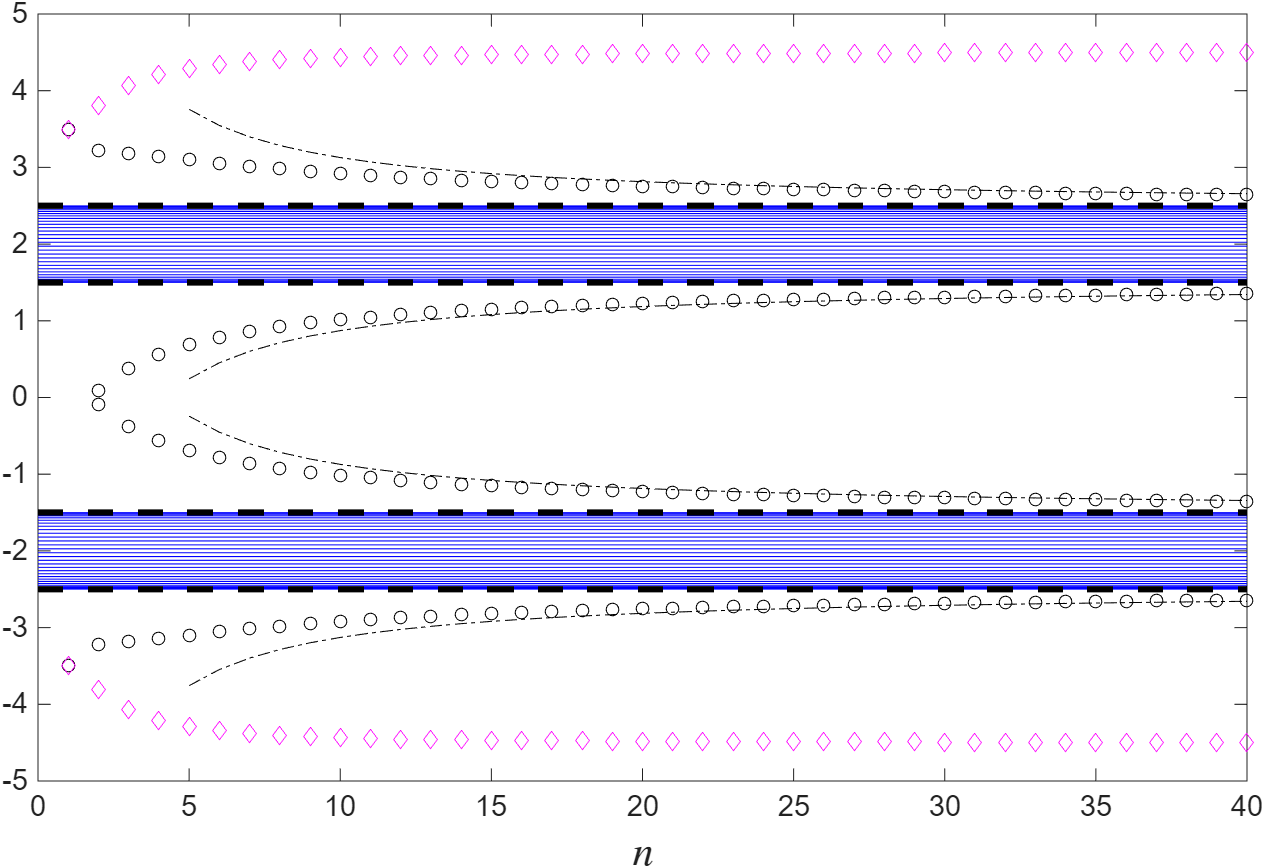} 
\end{center}

\vspace{-4ex}

\caption{$\Spec A$, for $A=A_M(\Delta)$ with $\Delta=1.5$ and $M=60$; the Gershgorin inclusion $\R\cap G$ and its block matrix versions; and the $\tau$-method inclusions sets, $\R\cap \Sigma_0^n(A)$. The horizontal solid lines  are the eigenvalues of $A$ and the dashed lines the boundary of $\Spec A^+_\infty(\Delta)=[-3.5,-1.5]\cup [1.5,3.5] $. The circles are $\pm c_n^+(\pm\Delta)$ (top and bottom lines) and $\pm c_{n,M}^-(\pm\Delta)$ (middle two lines); for each $M>n$, $\Spec A \subset\R \cap \Sigma_0^n(A) = \left[-c^+_{n}(-\Delta),-c^-_{n,M}(-\Delta)\right] \cup \left[c^-_{n,M}(\Delta),c^+_n(\Delta)\right]$; see \S\ref{sec:tauToep2}. The values of $c^-_{1,M}(\pm\Delta)$ (not plotted) are negative, so  $\R \cap \Sigma_0^1(A)=[-c^+_1(-\Delta),c^+_1(\Delta)]=[-3.5,3.5]=\R\cap G$, where $G$ is the Gershgorin inclusion set \eqref{eq:ger}.   The magenta diamonds are $\pm B_n$, where $B_n := \max S_n^+(\Delta)+2$. By \eqref{eq:GNR}, $\R\cap G^{N}=[-B_n,B_n]$ for this value of $\Delta$, where $G^{N}$ is the block matrix Gershgorin inclusion set \eqref{eq:salas} with blocks of size $n$. The dash-dot lines are the first two terms of the asymptotic expansions \eqref{eq:cnasymp} for $\pm c_n^{+}(\pm\Delta)$ and $\pm c_{n,M}^{-}(\pm\Delta)$.}  \label{fig:Toep2}
\end{figure}

In more detail, consider the case that $n\in \N$ divides $M$ and $M\geq 3$, and set $N=M/n$ and $m_i=n$, $i=1,\ldots, N$. Then $\R\cap G^{N}=\R\cap G=\{-\Delta,\Delta\}+[-2,2]$ if $n=1$. If $n\geq 2$ and $N>2$ it follows from the above representation for $\R\cap G^{N}$ and \eqref{eq:spec2p} that, where $\Spec A^+_\infty$ is given by \eqref{eq:SpAinf}, $\Spec A^+_\infty\subset G^{N}$. Thus, arguing as we did to get \eqref{eq:RcapGN}, and restricting attention to the case $\Delta\geq 0$ and $N>3$, we see that
\begin{equation} \label{eq:GNR}
\R\cap G^{N} = -G_n^r \cup G_n^r
\end{equation}
where
$$
G^r_n := \left\{ 
\begin{array}{ll}  \left[\min S_n^+(\Delta)-2,\max S_n^+(\Delta)+2\right], & n \mbox{ even},\\
\left[\Delta-2,\max S_n^+(\Delta)+2\right], & n \mbox{ odd},
\end{array}
\right.
$$

In Figure \ref{fig:Toep2} we show these inclusion sets for $\Spec A=\Spec A_M(\Delta)$ in the case $\Delta=1.5$, plotting $\R\cap \Sigma_0^n(A)$, given by \eqref{eq:Rcap2p}, and $\R\cap G^{N}$, given by \eqref{eq:GNR}, against $n$. These are inclusion sets for $\Spec A$ provided $n<M$ for $\R\cap \Sigma_0^n(A)$ (Theorem \ref{thm:tau}), provided $N=M/n$ is an integer $\geq 4$ for $\R\cap G^{N}$ (see above discussion). For fixed $n$,  $\R\cap G^{N}=-G_n^r\cap G_n^r$ and $\R\cap\Sigma_0^n(A)$ are both based on computing eigenvalues of principal submatrices of $A$ of order $n$. But, while  $\R\cap\Sigma_0^n(A)$ is monotonically decreasing, converging to a sharp inclusion set for $\Spec A$, $\R\cap G^{N}$ becomes larger as $n$ increases, approaching the limit $[-4.5,4.5]$.

\section{Conclusion and possible extensions} \label{sec:concl}

In this paper we have, extending recent bi-infinite matrix results \cite{CW.Heng.ML:SpecIncl1}, derived sequences of inclusion sets (our $\tau$ and $\tau_1$ methods) for the spectra and pseudospectra of a finite matrix $A$ (Theorems \ref{thm:tau} and \ref{thm:tau1}). Each inclusion set in each sequence is expressed as a union of pseudospectra of finite submatrices of what we term the tridiagonal part of $A$ when $A$ is written in the block form \eqref{eq:block},  square submatrices for the $\tau$ method, rectangular for the $\tau_1$ method. In \S\ref{sec:examples} we have explored the sharpness of these inclusion sets for the case when the finite matrix is a large Toeplitz matrix, showing that both sequences produce sharp inclusion sets for the pseudospectrum of $A$ 
  in the case that $A$ is large (if $A$ is banded for the $\tau$ method, has symbol in the Wiener class for the $\tau_1$ method). If $A$ is Hermitian, then our inclusion sets also provide sharp inclusions for the spectrum. Importantly, the inclusions in Theorem \ref{thm:tau1} are two-sided, associated with the use of rectangular rather than square submatrices in the $\tau_1$ method. This is a key proof ingredient for the $\tau_1$-method results for general Toeplitz matrices. 
  
In \S\ref{sec:2T} we have extended our convergence results for the $\tau$-method to a class of tridiagonal 2-Toeplitz Hermitian matrices, and used this class and a simple Toeplitz example (Remark \ref{rem:bmG}) to demonstrate that our $\tau$-method sequence can be superior to existing block-matrix Gershgorin generalisations. Where $G$ is the classical Gershgorin set, we have also shown for the classes of large Hermitian matrices studied that, while $\R\cap G$ can be a sharp spectrum bound in particular cases, our $\tau$ and $\tau_1$ sequences generate sharp inclusions  in every case (Remark \ref{rem:BandG}, \S\ref{sec:T2G}).

There are many directions for further work.  Theorems \ref{thm:tau} and \ref{thm:tau1}, and Theorem \ref{thm:taubi} and \cite{CW.Heng.ML:SpecIncl1}, provide inclusion sets for finite matrices and for bi-infinite matrices, respectively. In work in progress we are constructing similar inclusion set sequences  for the semi-infinite case (exemplified by the infinite Toeplitz matrix $A_\infty^{+}$ in \S\ref{sec:bandT}). 
Our results for that case, like the results in this paper for the finite matrix case, depend on inclusion sets from \cite{CW.Heng.ML:SpecIncl1} for the bi-infinite case,  extended in \S\ref{sec:bi}. The arguments  for the bi-infinite case use, implicitly, that $\Z$ is a group under addition. In other work in progress, with Christian Seifert, we are extending these arguments to general groups;  while our results in \cite{CW.Heng.ML:SpecIncl1} apply for bi-infinite matrices acting on $\ell^2(\Z)$, our new results apply to matrices acting on $\ell^2(G)$, for some Abelian group $G$. When $G$ is finite, this provides a route to spectral inclusion sets for certain finite matrices, complementing the work in this paper. 

Other possible directions for future research include:
\begin{enumerate}
\item Work on the (efficient) implementation of our inclusion sets for general finite matrices, the development of associated open source codes, and large-scale computational experiments investigating the sharpness of our inclusion sets, and the relative merits of our $\tau$ and $\tau_1$ families.
\item Complementing 1, further theoretical studies related to the sharpness of our inclusion sets for large matrices. A key tool in our \S\ref{sec:bandT}  investigations for large Toeplitz matrices was \eqref{eq:Toep}, capturing the asymptotics of pseudospectra for large matrix size. Another matrix class, of interest for applications in mathematical physics (see, e.g., \cite[\S VIII]{TrefEmbBook} and \cite[\S8.4]{CW.Heng.ML:SpecIncl1}) and where we understand the asymptotics of pseudospectra, is the class of random tridiagonal matrices (see \cite[Remark 4.17]{CWLi2016:Coburn}). 
\item Our results in \cite{CW.Heng.ML:SpecIncl1} and in Theorem \ref{thm:taubi} for bi-infinite matrices allow operator-valued matrix entries. Such matrices arise, for example,  in the study of integral operators on $L^2(\R)$ via a standard {\em discretisation} (e.g., \cite[{\S}1.2.3]{LiBook}) which replaces the operator on $L^2(\R)$ by a unitarily equivalent operator on $\ell^2(\Z,L^2[0,1])$, to which Theorem \ref{thm:taubi} applies.  Similarly, for any $N\in \N$, an integral operator on $L^2[a,b]$, for some finite interval $[a,b]$, is unitarily equivalent to an operator on $\ell^2(\I,L^2[0,h])$, where $\I=\{1,\ldots,N\}$ and $h=(b-a)/N$, to which versions of Theorems \ref{thm:tau} and \ref{thm:tau1} could be applied that allowed operator-valued  entries in \eqref{eq:block}. Extensions to such cases  should be relatively straightforward, starting from Theorem \ref{thm:taubi} and the results in \cite{CW.Heng.ML:SpecIncl1}. Similarly, the block-matrix versions of Gershgorin's theorem, \eqref{eq:salas} and \eqref{eq:salas2}, have been extended to matrices with operator-valued entries in \cite{Salas}.
\end{enumerate}

\paragraph{Data availability} The Matlab codes used to produce the data displayed in Figures \ref{fig:jordan}-\ref{fig:Toep2}, and to produce the figures, are provided in the supplementary materials.

\vspace{-2ex}

\newpage
\setcounter{figure}{0}
\renewcommand{\thefigure}{\arabic{figure}}
\setcounter{page}{1}
\setcounter{section}{0}
\renewcommand{\thesection}{S\arabic{section}}
\renewcommand{\theHsection}{S\arabic{section}}
\begin{center}
{\bf \large Supplementary materials to}\\
\vspace{1ex}
{\bf  \Large `Gershgorin-Type Spectral Inclusions for Matrices'}\\
\vspace{2ex}

{\sc \large Simon N. Chandler-Wilde\symbolfootnote{Department of Mathematics and Statistics, University of Reading, Reading, RG6 6AX, UK. Email: \href{mailto:s.n.chandler-wilde@reading.ac.uk}{\tt s.n.chandler-wilde@reading.ac.uk}}}
\quad and\quad {\sc \large Marko Lindner\symbolfootnote{Institut Mathematik, TU Hamburg (TUHH), D-21073 Hamburg, Germany. Email: \href{mailto:lindner@tuhh.de}{\tt lindner@tuhh.de}}}\\
\vspace{2ex}
August 2025
\end{center}

\noindent {\bf \large Table of Contents}\\
\\
\hyperref[sec:intros]{\bf \bl{S1. Introduction}}\\
\hyperref[sec:script]{\bf \bl{S2. Matlab files used to produce the figures in \cite{CWL}}}\\
\hspace*{3ex} \hyperref[sec:Jordans]{\bl{\it S2.1. The Jordan block example}}\\
\hspace*{6ex} \hyperref[code:JB]{\bl{\it S2.1.1. JordanBlock.m}}\\
\hspace*{3ex} \hyperref[sec:penm]{\bl{\it S2.2. A Toeplitz pentadiagonal example}}\\
\hspace*{6ex} \hyperref[code:fig2]{\bl{\it S2.2.1. figure\_2\_in\_section\_5.m}}\\
\hspace*{6ex} \hyperref[code:pentau]{\bl{\it S2.2.2. penta\_tau.m and its test script file test\_penta\_tau.m}}\\
\hspace*{6ex} \hyperref[code:ptm]{\bl{\it S2.2.3. penta\_tau\_method.m}}\\
\hspace*{6ex} \hyperref[code:penta]{\bl{\it S2.2.4. penta.m}}\\
\hspace*{6ex} \hyperref[code:theta]{\bl{\it S2.2.5. theta.m}}\\
\hspace*{3ex} \hyperref[sec:Toep2]{\bl{\it S2.3. A 2-Toeplitz tridiagonal example}}\\
\hspace*{6ex} \hyperref[code:fig3]{\bl{\it S2.3.1. figure\_3\_in\_section\_5.m}}\\
\hspace*{6ex} \hyperref[code:climits]{\bl{\it S2.3.2. climits.m}}\\
\hspace*{6ex} \hyperref[code:Toep2e]{\bl{\it S2.3.3. Toep2eigs.m}}\\
\hyperref[sec:Herm]{\bf \bl{S3. A Matlab function to compute the $\tau$-method inclusion sets in the}}\\
\hspace*{4ex} \hyperref[sec:Herm]{\bf \bl{Hermitian case}}\\
\hspace*{3ex} \hyperref[sec:Mfts]{\bl{\it S3.1. Matlab functions and test scripts}}\\
\hspace*{6ex} \hyperref[code:tm]{\bl{\it S3.1.1. tau\_method.m}}\\
\hspace*{6ex} \hyperref[code:es]{\bl{\it S3.1.2. eigs\_sigma.m}}\\
\hspace*{6ex} \hyperref[code:int]{\bl{\it S3.1.3. mergeIntervals.m and intersectIntervals.m}}\\
\hspace*{6ex} \hyperref[code:Toep3]{\bl{\it S3.1.4. ThreeToeplitzExample.m and Toep3\_tau\_method.m}}\\
\hyperref[sec:biblios]{\bl{\bf References}}

\section{Introduction} \label{sec:intros}

This pdf document is the main part of the supplementary materials\symbolfootnote{The other part of the supplementary materials is a zipped folder containing the various Matlab functions and script files listed in this pdf.} to our recent paper \cite{CWL}. It comprises, in \S\ref{sec:script}, printouts and documentation for the Matlab script files and associated functions that were used to produce the figures in \cite{CWL}, and examples of using these script files to produce figures for other parameter values. In \S\ref{sec:Herm} we present, and give examples of the use of, Matlab code to compute $\R\cap \Sigma_0^n(A)$ in the case that $A$ is Hermitian.  Here $\Sigma_0^n(A)$ is the $\tau$-method family of inclusions sets for $\Spec A$, presented in \cite[\S1.1]{CWL}. In the case that  $A$ is Hermitian, $\Spec A$ is real so that $\Spec A\subset \R\cap \Sigma_0^n(A)$. 

Throughout, we include Matlab script files that test our implementations and the algorithms behind these implementations. In particular, we compare our hand calculations of $\Spec A\cap \Sigma_0^n(A)$ for particular Hermitian matrices in \cite{CWL} (see \cite[(5.40), (5.47)]{CWL}) with the results of our general purpose code from \S\ref{sec:Herm}, achieving agreement to within machine precision; see \S\ref{sec:penm} and \S\ref{sec:Toep2}. This, of course, is suggestive that our calculations leading to  \cite[(5.40), (5.47)]{CWL} are accurate, and is suggestive that our general code is correct.

\section{Matlab files used to produce the figures in \cite{CWL}} \label{sec:script}

\subsection{The Jordan block example} \label{sec:Jordans}

Figure 5.1 from \cite{CWL} is reproduced here as Figure \ref{fig:jordans}. Running the Matlab script file \verb+JordanBlock+, reproduced in \S\ref{code:JB}, produces as outputs the figure files \verb+Sigma.png+ and \verb+Gamma.png+ that are the left and right panels in Figure \ref{fig:jordans}. It also produces as output the radii of the three  circles referenced in the caption of Figure \ref{fig:jordans}. As explained in the comments to \verb+JordanBlock.m+, results for other values of the parameters $n\in \N$ and $\eps\geq 0$ can be obtained by changing the values assigned to \verb+n+ and \verb+epsilon+. For example, setting  \verb+n = 2+ and \verb+epsilon = 0+ (so that $n=2$ and $\eps=0$) gives the results in Figure \ref{fig:jordanE}.

\begin{figure}[t]
\[
\begin{tabular}{cc}
\includegraphics[width=55mm]{Sigma.png} &
\includegraphics[width=55mm]{Gamma.png}\\
$\Sigma^n_{\eps}(V_N)$ & $\Gamma^n_{\eps}(V_N)$
\end{tabular}
\]
\caption{The $\tau$ and $\tau_1$ inclusion sets, $\Sigma^n_{\eps}(V_N)$ and $\Gamma^n_{\eps}(V_N)$, for $n=4$ and $\eps=0.15$.
Each is an inclusion set for $\Speps V_N$ if $N\geq n$. Also shown in each panel are circles of radii $1+\eps=1.15$ ($- .$), $\alpha_n(\eps_n+\eps)\approx 1.18$ ($-\,-$), and $\alpha_n(\bl{\eps'_n}+\eps)\approx 1.48$ ($\cdots$). The circles of radii $\alpha_n(\eps_n+\eps)$ and $\alpha_n(\bl{\eps'_n}+\eps)$ are the boundaries of $\Sigma^n_{\eps}(V_N)$ and $\Gamma^n_{\eps}(V_N)$. } \label{fig:jordans}
\end{figure}

\begin{figure}[h]
\[
\begin{tabular}{cc}
\includegraphics[width=55mm]{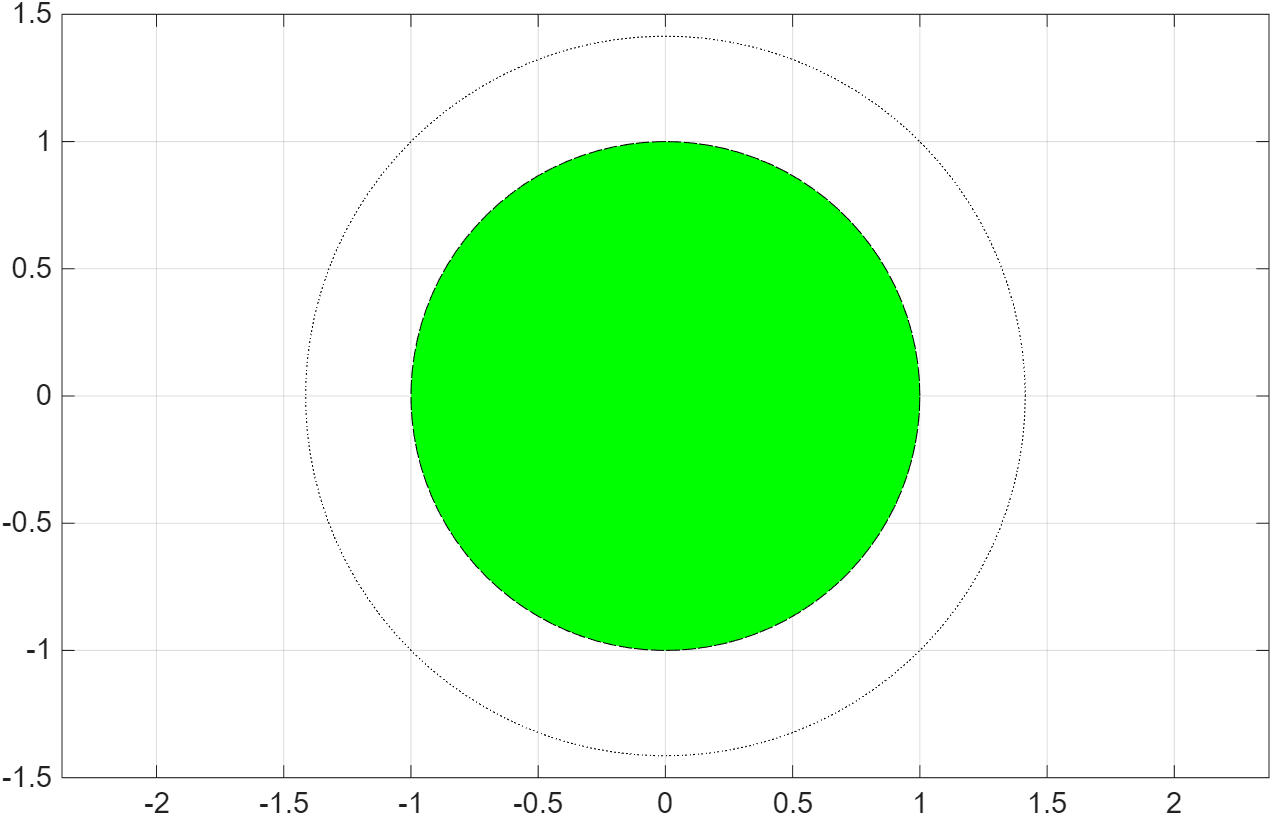} &
\includegraphics[width=55mm]{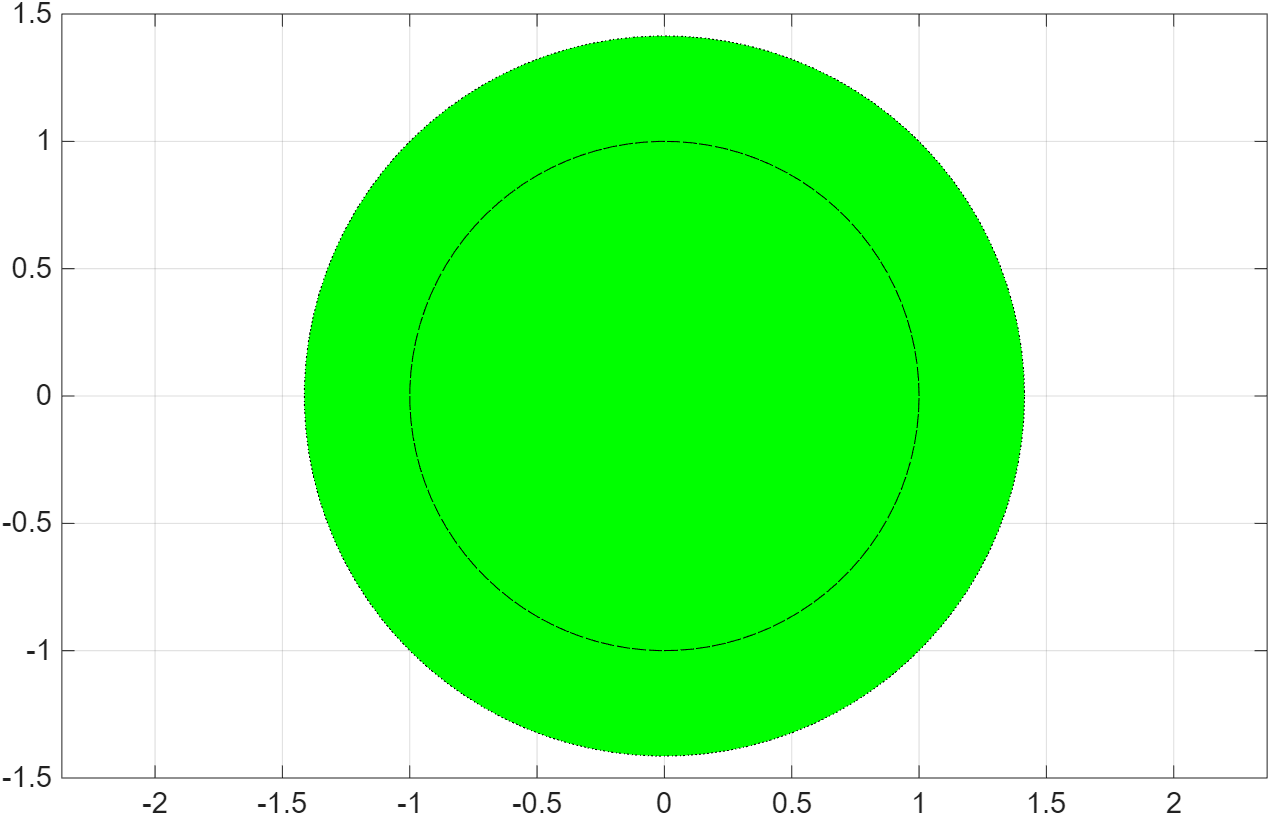}\\
$\Sigma^n_{\eps}(V_N)$ & $\Gamma^n_{\eps}(V_N)$
\end{tabular}
\]
\caption{The $\tau$, $\tau_1$, and $\pi$ inclusion sets, $\Sigma^n_{\eps}(V_N)$, $\Gamma^n_{\eps}(V_N)$, and $\Pi^{n,t}_{\eps}(V_N)$, respectively, for $n=2$ and $\eps=0$.
Each is an inclusion set for $\Speps V_N$ if $N\geq n$. Also shown in each panel are circles of radii $1+\eps=1$ ($- .$), $\alpha_n(\eps_n+\eps)=1$ ($-\,-$), and $\alpha_n(\eps'_n+\eps)\approx 1.41$ ($\cdots$). The circles of radii $\alpha_n(\eps_n+\eps)$ and $\alpha_n(\eps'_n+\eps)$ are the boundaries of $\Sigma^n_{\eps}(V_N)$ and $\Gamma^n_{\eps}(V_N)$, respectively, and the two innermost circles coincide for this value of $\eps$.} \label{fig:jordanE}
\end{figure}

\subsubsection{JordanBlock.m} \label{code:JB}

\begin{verbatim}
% Script file JordanBlock.m, that produces Figure 5.1 in [1].
%
% [1] S. N. Chandler-Wilde and M. Lindner, Gershgorin-type inclusion sets
%     for matrices. Submitted for publication, 2024.
%
% The file produces plots of the tau and tau_1 inclusion sets of 
% [1] for a matrix A that is a Jordan block of order N. See the caption
% of [1,Figure 5.1] for more details.
%
% The following lines set values for the parameters n and epsilon
% that define these inclusion sets. The parameter values below, 
% i.e., n = 4 and epsilon = 0.15, are the parameter values used 
% in [1,Figure 5.1]. Figures for different choices of these 
% parameter values can be obtained by changing the values 4 and 0.15 
% in the next two lines (any positive integer n and value
% of epsilon are allowable). Depending on the choices of n and epsilon 
% the value of axiswidth, set and described below, may need to change.
disp('Display the values of the parameters n and epsilon:')
n = 4
epsilon = 0.15
% Compute epsilon_n and epsilon'_n, as defined in [1,Section 5.1.1].
epsn = 2*sin(pi/(4*n+2));
epsndash = 2*sin(pi/(2*n+2));
epsiSig = epsn + epsilon;
epsiGam = epsndash + epsilon;
% Where the function alpha_n is as defined in [1,Section 5.1.1] (and see
% function alphan below), compute the values of alpha_n(epsi) for epsi =
% epsiSig and epsiGam
alSig = alphan(n,epsiSig);
alGam = alphan(n,epsiGam);
% Compute the coordinates x and y of points on the unit circle.
theta = linspace(0,2*pi);
x = cos(theta); y = sin(theta);
% Set the axes: the figures will display at least the part of the complex
% plane with real and imaginary parts in the range [-axiswidth,axiswidth].
axiswidth = 1.5;
figure(1)
% This is the plot of the tau method inclusion set, the left-hand plot in
% [1, Figure 5.1].
h = fill(alSig*x,alSig*y,'g');
set(h,'EdgeColor','none');
hold on
% Add to the plot three circles, as per the caption of [1, Figure 5.1].
disp('Display the radii of the three circles plotted in [1, Figure 5.1],')
disp('namely rmin, alSig, and alGam. For detail see the caption of')
disp('[1, Figure 5.1].')
rmin = 1 + epsilon 
alSig
alGam
plot(rmin*x,rmin*y,'k-.')
plot(alSig*x,alSig*y,'k--')
plot(alGam*x,alGam*y,'k:')
hold off
axis([-axiswidth,axiswidth,-axiswidth,axiswidth])
axis equal
grid
ax = gca; ax.FontSize = 14;
% Export the plot to file Sigma.png in the current directory.
exportgraphics(gcf,'Sigma.png');

figure(2)
% This is the plot of the tau_1 method inclusion set, the right hand plot in
% [1, Figure 5.1].
h = fill(alGam*x,alGam*y,'g');
set(h,'EdgeColor','none');
hold on
% Add to the plot three circles, as per the caption of [1, Figure 5.1].
plot(rmin*x,rmin*y,'k-.')
plot(alSig*x,alSig*y,'k--')
plot(alGam*x,alGam*y,'k:')
hold off
axis([-axiswidth,axiswidth,-axiswidth,axiswidth])
axis equal
grid
ax = gca; ax.FontSize = 14;
% Export the plot to file Gamma.png in the current directory.
exportgraphics(gcf,'Gamma.png');

function p = phin(n,s)
%
% function p = phin(n,s)
%
% Given scalar inputs s >=1 and positive integer n, this function computes
% p, the unique solution in the range [pi/(2*n+1),pi/(n+1)] of the equation
% s*sin((n+1)*t) = sin(n*t), this the value of phi_n(s) as defined below
% Equation (5.6) in [1].
%
p = fzero(@(t) s*sin((n+1)*t)-sin(n*t),[pi/(2*n+1),pi/(n+1)]);
end

function v = vn(n,s)
%
% function v = vn(n,s)
%
% Given scalar inputs s>=1 and positive integer n, this function computes
% v > 0, the value of the function v_n(s), defined in [1, Section 5.1.1],
% notably [1, Equation (5.6)].
%
v = sqrt(1+s^2-2*s*cos(phin(n,s)));
end

function a = alphan(n,epsi)
%
% function a = alphan(n,epsi)
%
% Given scalar inputs, namely the positive integer n and epsi >= eps_n := 
% 2sin(pi/(4n+2)), this function computes a >= 1, the unique solution of
% v_n(s) = epsi, where v_n(s) is defined for s >= 1 by [1, Equation (5.6)],
% and see function vn.
%
epsn = 2*sin(pi/(4*n+2));
ep = epsi - epsn;
if ep < 0
    disp(' ')
    disp('Error: the input value of epsi to function alphan is < epsn')
    disp(' ')
end
lower = 1+ep;
upper = lower + 2*epsn*ep/(ep+sqrt(2*epsn*ep+ep^2));
a = lower;
if ep > 0
    a = fzero(@(s) vn(n,s)-epsi,[lower,upper]);
end
end
\end{verbatim}

\subsection{A Toeplitz pentadiagonal example} \label{sec:penm}

Figure 5.2 from \cite{CWL} is reproduced here as Figure \ref{fig:pentas}. Running  the Matlab script file \verb+figure_2_in_section_5.m+,  reproduced in \S\ref{code:fig2}, produces as output the figure file \verb+penta.png+ shown in this figure. This figure plots the intersection with the real line of the $\tau$-method inclusions sets $\Sigma_0^n(A)$, for $n=1,\ldots,50$, given by \cite[Equation (5.40)]{CWL}, in the case when $A$ is the order $M=110$ pentadiagonal Toeplitz matrix with a zero main diagonal and 1's on the first two sub- and superdiagonals. 

This script file depends on subsidiary Matlab functions, \verb+penta_tau.m+, \verb+penta.m+, and \verb+theta.m+, which are listed below, in \S\ref{code:pentau}, \S\ref{code:penta}, and \S\ref{code:theta}, respectively, the first of these functions carrying out the actual computation of $\R \cap \Sigma_0^n(A)$.  Section \ref{code:pentau} also contains a test script file \verb+test_penta_tau.m+. This checks the computation of \verb+penta_tau+, that uses the formula \cite[Equation (5.40)]{CWL}, by comparing its output with that of the function  \verb+penta_tau_method+ in \S\ref{code:ptm}, which computes using the general purpose Hermitian $\tau$-method code presented below in \S\ref{sec:Herm}. The results of this testing provide strong support for the accuracy of  \cite[Equation (5.40)]{CWL}.

\begin{figure}[t]
\begin{center}
\includegraphics[width=90mm]{Penta.png} 
\end{center}
\caption{Plot of $\Spec A$, where $A=A_M$ is the order $M$ pentadiagonal Toeplitz example at the end of \cite[Remark 5.8]{CWL}. Also plotted are the $\tau$-method inclusions sets, $\R\cap \Sigma_0^n(A)$. The horizontal solid lines indicate the eigenvalues of $A$ when $M=110$, and the dashed lines the endpoints of $\Spec A^+_\infty=[-9/4,4]$. The circles are the upper and lower limits of the interval $\R\cap \Sigma_0^n(A)$, given by  \cite[Equation (5.40)]{CWL}. Where $G$ is the standard Gershgorin inclusion set, $\R\cap G=[-4,4]$; see  \cite[Remark 5.8]{CWL}.} \label{fig:pentas}
\end{figure} 

\subsubsection{figure\_2\_in\_section\_5.m} \label{code:fig2}
\begin{verbatim}
% Script file figure_2_in_section_5.m, that produces Figure 5.2 in [1].
%
% [1] S. N. Chandler-Wilde and M. Lindner, Gershgorin-type inclusion sets
%     for matrices. Submitted for publication, 2024.
%
% The file produces a plot of the intersection with the real line of the
% tau method inclusion sets of [1] for Spec A, the set of eigenvalues of A,
% in the case that A is an order M matrix A that is pentadiagonal and 
% Toeplitz with a zero main diagonal and 1's in the first two sub- and 
% super-diagonals. The plot also displays Spec A. See the caption of 
% [1,Figure 5.2] for more details.
%

% The following lines set values for the parameters M and n_len; note that
% R \cap \Sigma_0^n(A), the tau-method inclusion set intersected with the
% real line, is plotted for n = 1,...,n_len.
disp('Display the values of the parameters M and n_len:')
M = 110
n_len = 50
nn = 1:n_len;

% Compute R \cap \Sigma_0^n(A) = [tau_min(n),tau_max(n)], for n=1,..,n_len
tau_min = zeros(size(nn)); tau_max = tau_min;
Ones = [1 1];
for n = 1:n_len
    [tau_min(n),tau_max(n)] = penta_tau(n);
end

% Plot these inclusion sets in a figure
figure(1)
plot(nn,tau_min,'ko',nn,tau_max,'ko')

% Add a plot of Spec A, the eigenvalues of A
hold on
SpecA = penta(M);
xax = [0,n_len];
for i = 1:M
    plot(xax,SpecA(i)*Ones,'b-')
end

% Add a plot of the boundaries of the interval [-9/4,4], which is the
% large M limit of Spec A
SpecAinfMin = -(9/4)*Ones;
SpecAinfMax = 4*Ones;
plot(xax,SpecAinfMin,'k--',xax,SpecAinfMax,'k--','LineWidth',3);
hold off

ax = gca; ax.FontSize = 14;
xlabel('$n$',FontSize=20,Interpreter='latex')
grid
exportgraphics(gcf,'Penta.png');
\end{verbatim}

\subsubsection{penta\_tau.m and its test script file test\_penta\_tau.m} \label{code:pentau}

The following Matlab function \verb+penta_tau+ is used by the script file in \S\ref{code:fig2} to produce Figure \ref{fig:pentas}, computing the $\tau$-method inclusion set $\R\cap \Sigma_0^n(A)$, for the particular pentadiagonal Toeplitz matrix that is the focus of this section. This function in turn uses the function \verb+theta+, in \S\ref{code:theta}. Also listed below, in \S\ref{code:ptm}, is the Matlab function \verb+penta_tau_method+ that has the same inputs and outputs as \verb+penta_tau+, but computes $\R\cap \Sigma_0^n(A)$ via the general purpose $\tau$-method code \verb+tau_method+ that we present in \S\ref{sec:Herm} below. The test file \verb+test_penta_tau.m+ is listed below together with its output that shows that, over a range of parameter values, these two functions produce identical outputs.

\begin{verbatim}
function [tau_min,tau_max] = penta_tau(n)
%
% function [tau_min,tau_max] = penta_tau(n)
%
% This function evaluates the intersection of the tau-method inclusion set, 
% Sigma_0^n(A), with the real line, in the case that A is a pentadiagonal
% Toeplitz matrix with zero on the main diagonal and 1's on the fist two
% sub- and super-diagonals. Note that, for each positive integer n,
% Sigma_0^n(A) \cap R is a single closed interval [tau_min,tau_max], which,
% provided 2n < M, where M is the order of A, is an inclusion set for 
% Spec A. The function assumes that M is even so that Sigma_0^n(A) \cap R 
% is given by [1, Equation (5.40)].
%
% [1] S. N. Chandler-Wilde and M. Lindner, Gershgorin-type inclusion sets
%     for matrices. Submitted for publication, 2024.
%
% Input variable:
% n - positive integer
%
% Output variables:
% tau_min, tau_max - real, with tau_min < tau_max. [tau_min,tau_max] is the
%                    inclusion set for Spec A.

% First compute the value of theta_n (thetan) and the value of epsilon_n 
% (epsn), as defined below [1,Equation (5.16)],
% first theta_n (thetan)
thetan = theta(n);
% and now epsilon_n (epsn)
rA = 2*sqrt((3+sqrt(5))/2);
epsn = 2*rA*sin(thetan/2);

% Now compute the inclusion set, first calculating the eigenvalues of A
% in the case that A = 2n. 
Spec = penta(2*n);
tau_min = min(Spec) - epsn;
tau_max = max(Spec) + epsn;

end
\end{verbatim}

\paragraph{test\_penta\_tau.m}
\begin{verbatim}
n_len = 50;
nn = 1:n_len;
M = 110;
max_rel_err = 0;
for n = 1:n_len
    [a,b] = penta_tau(n); ptn = [a,b];
    rel_err = norm(ptn - penta_tau_method(M,n))/norm(ptn);
    max_rel_err = max([max_rel_err,rel_err]);
end
disp(' ')
disp('Test of consistency of functions penta_tau.m and penta_tau_method.m')
disp(' ')
disp('Both these functions compute the tau method inclusion set ')
disp('R\cap \Sigma_0^n(A), where A is a specific order M pentadiagonal')
disp('Toeplitz matrix. The first function uses a formula specific to this')
disp('case, the second computes the inclusion set by calling the generic')
disp('tau method function tau_method.m. Computation is carried out')
disp('for each n = 1,...,n_len. max_rel_err is the maximum, over this')
disp('range of n, of')
disp('norm(penta_tau(n) - penta_tau_method(M,n))/norm(penta_tau(n))')
disp(' ')
disp('The results of these computations are:')
disp(' ')
disp(['M = ',num2str(M),', n_len = ',num2str(n_len),', ' ...
    'max_rel_err = ',num2str(max_rel_err)])
\end{verbatim}

\paragraph{Results from test\_penta\_tau.m}

\begin{verbatim}
>> test_penta_tau
 
Test of consistency of functions penta_tau.m and penta_tau_method.m
 
Both these functions compute the tau method inclusion set 
R\cap \Sigma_0^n(A), where A is a specific order M pentadiagonal
Toeplitz matrix. The first function uses a formula specific to this
case, the second computes the inclusion set by calling the generic
tau method function tau_method.m. Computation is carried out
for each n = 1,...,n_len. max_rel_err is the maximum, over this
range of n, of
norm(penta_tau(n) - penta_tau_method(M,n))/norm(penta_tau(n))
 
The results of these computations are:
 
M = 110, n_len = 50, max_rel_err = 0
\end{verbatim}

\subsubsection{penta\_tau\_method.m} \label{code:ptm}

The following is the function \verb+penta_tau_method+ which has the same inputs and outputs as \verb+penta_tau+ above, but computes the $\tau$-method inclusion sets by a call to the general purpose $\tau$-method function \verb+tau_method+, discussed in \S\ref{sec:Herm} below.

\begin{verbatim}
function Sigma = penta_tau_method(M,n)
%
% function [tau_min,tau_max] = penta_tau_method(M,n)
%
% This function evaluates the intersection of the tau-method inclusion set, 
% Sigma_0^n(A), with the real line, in the case that A is a pentadiagonal
% Toeplitz matrix with zero on the main diagonal and 1's on the fist two
% sub- and super-diagonals. Note that, for each positive integer n,
% Sigma_0^n(A) \cap R is a single closed interval [tau_min,tau_max], which,
% provided 2n < M, where M is the order of A, is an inclusion set for 
% Spec A. 
% 
% The function assumes that M is even so that Sigma_0^n(A) \cap R 
% is given by [1, Equation (5.40)], but the computation does not use this
% formula but instead evaluates Sigma_0^n(A) \cap R directly from the
% definition of Sigma_0^n(A), which is [1,Equation (1.6)].
%
% [1] S. N. Chandler-Wilde and M. Lindner, Gershgorin-type inclusion sets
%     for matrices. Submitted for publication, 2024.
%
% Input variable:
% M - even integer > 2
% n - positive integer, with n < N := M/2
%
% Output variables:
% tau_min, tau_max - real, with tau_min < tau_max. [tau_min,tau_max] is the
%                    inclusion set for Spec A.

% Construct the matrix A, which is order M, pentadiagonal and Toeplitz, 
% as described above.
A = zeros(M);
for i = 1:M-1
    A(i, i+1) = 1;
    A(i+1, i) = 1;
end
for i = 1:M-2
    A(i, i+2) = 1;
    A(i+2, i) = 1;
end 

% Now compute the tau-method inclusion set for Spec A, thinking of A as
% having a tridiagonal block form with 2x2 blocks.
N = M/2;
m = 2*ones(1,N);
Sigma = tau_method(A,m,n);

end
\end{verbatim}

\subsubsection{penta.m} \label{code:penta}

\begin{verbatim}
function Spec = penta(M)
%
% function Spec = penta(M)
%
% This function evaluates Spec A, a row vector containing the eigenvalues
% of A, in the case that A is a an order M pentadiagonal Toeplitz matrix 
% with zeros on the main diagonal and 1's on the fist two
% sub- and super-diagonals. This Toeplitz example is discussed in Remark 
% 5.8 and Figure 5.2 in [1].
%
% [1] S. N. Chandler-Wilde and M. Lindner, Gershgorin-type inclusion sets
%     for matrices. Submitted for publication, 2024.
%
% Input variable:
% M - positive integer
%
% Output variable:
% Spec - row vector with the (real) eigenvalues of A
%
A = zeros(M);
for i = 1:M-1
    A(i, i+1) = 1;
    A(i+1, i) = 1;
end
for i = 1:M-2
    A(i, i+2) = 1;
    A(i+2, i) = 1;
end 

Spec = eig(A).';

end
\end{verbatim}

\subsubsection{theta.m} \label{code:theta}

The function \verb+theta+ solves a particular nonlinear equation, the solution of which is a component of all $\tau$-method computations in the Hermitian case. Also listed below are a script file \verb+test_theta.m+ that tests the accuracy of this function, together with the results from running that script file.  
\begin{verbatim}
function th = theta(n)
%
% function th = theta(n)
%
% Given a positive integer n, th is the unique solution in the interval
% [pi/(n+3),pi/(n+2)] of [1, Equation (5.17)], i.e. of the equation
% 2*cos((n+1)*t/2) = cos((n-1)*t/2).
%
% [1] S. N. Chandler-Wilde and M. Lindner, Gershgorin-type inclusion sets
%     for matrices. Submitted for publication, 2024.
if n == 1
    th = pi/3;
else
    th = fzero(@(t) 2*cos((n+1)*t/2)-cos((n-1)*t/2),[pi/(n+3),pi/(n+2)]);
end
end
\end{verbatim}

\paragraph{test\_theta.m}
\begin{verbatim}
% Script file test_theta to test the function theta.m
n_len = 100;
nn = 1:n_len; Resid = zeros(size(nn)); Resid2 = Resid;
for n = nn
    th = theta(n);
    Resid(n) = 8*sin(th/2)*cos((n+1/2)*th) + sin((n-1)*th);
    Resid2(n) = 2*cos((n+1)*th/2)-cos((n-1)*th/2);
end
disp('Test of function theta.m')
disp(' ')
disp('Error in satisfying 8sin(t/2)cos((n+1/2)t) + sin((n-1)t)=0,')
disp(['i.e., maximum value of LHS over n = 1,...,',num2str(n_len)])
disp(['with t = theta(n) is ',num2str(max(Resid))])
disp(' ')
disp('Error in satisfying 2cos((n+1)t/2)-cos((n-1)t/2)=0,')
disp(['i.e., maximum value of LHS over n = 1,...,',num2str(n_len)])
disp(['with t = theta(n) is ',num2str(max(Resid))])
disp(' ')
disp('Note that these two equations are eqivalent, have the same')
disp('unique zero in (pi/(n+3),pi/(n+2].')
\end{verbatim}

\paragraph{Results from test\_theta.m}
\begin{verbatim}
>> test_theta
Test of function theta.m
 
Error in satisfying 8sin(t/2)cos((n+1/2)t) + sin((n-1)t)=0,
i.e., maximum value of LHS over n = 1,...,100
with t = theta(n) is 9.3814e-15
 
Error in satisfying 2cos((n+1)t/2)-cos((n-1)t/2)=0,
i.e., maximum value of LHS over n = 1,...,100
with t = theta(n) is 9.3814e-15
 
Note that these two equations are eqivalent, have the same
unique zero in (pi/(n+3),pi/(n+2].
\end{verbatim}

\subsection{A 2-Toeplitz tridiagonal example} \label{sec:Toep2}

Figure 5.3 from \cite{CWL} is reproduced here as Figure \ref{fig:Toep2s}. Running the Matlab script file \verb+figure_3_in_section_5.m+, reproduced in \S\ref{code:fig3}, produces as output the figure file \verb+DS.png+ shown in Figure \ref{fig:Toep2s}. This figure plots the intersection with the real line of the $\tau$-method inclusions sets $\Sigma_0^n(A)$, for $n=1,\ldots,n_{\mathrm{len}}$, given by \cite[Equation (5.47)]{CWL}, in the case when $n_{\mathrm{len}}=40$ and $A$ is the order $M=60$ tridiagonal 2-Toeplitz matrix with 1's on the first sub- and superdiagonal and the 2-periodic sequence $(\Delta,-\Delta,\Delta,\ldots,(-1)^{M-1}\Delta)$ on the main diagonal, with $\Delta = 1.5$.  This script file also plots, on the same figure, $\R\cap G^N$, where $G^N$ are the block-matrix Gershgorin inclusion sets, given by \cite[Equation (5.50)]{CWL}, and the asymptotics of the boundaries of $\R\cap \Sigma_0^n(A)$, given by  \cite[Equation (5.48)]{CWL}. The figure also plots the eigenvalues of $A$. 

The parameters $\Delta\geq 0$, $M\geq 3$, and $n_{\mathrm{len}}<M$ can be altered in the script file to produce graphs for other parameter values, as made clear in the comment statements in \verb+figure_3_in_section_5+. Figure \ref{fig:Toep2E} is obtained by adjusting the parameter values so that $M=61$ and $\Delta=3$.

The script file  \verb+figure_3_in_section_5.m+ depends on subsidiary Matlab functions, \verb+climits.m+, \verb+Toep2eigs.m+, and \verb+theta.m+. The first two of these are listed below in \S\ref{code:climits} and  \S\ref{code:Toep2e}; the function \verb+theta+ is listed above in \S\ref{code:theta}. It is the first of these functions, \verb+climits+, that carries out the actual computation of $\R \cap \Sigma_0^n(A)$, computing the boundaries of the sets $\widehat S^+_{n,N}(\pm \Delta)$ in \cite[Equation (5.47)]{CWL}.  The function \verb+Toep2eigs.m+ computes $\Spec A$.

For the purpose of testing the implementation of the function \verb+climits+ and the formulae below \cite[Equation (5.47)]{CWL} that it depends on, we also include in \S\ref{code:climits} two functions that compute $\R\cap \Sigma_0^n(A)$ for the 2-Toeplitz tridiagonal matrix $A$ that is the focus of this section. The first of these, \verb+Toep2_tau+, uses the formula \cite[(5.47)]{CWL}, which expresses  $\R\cap \Sigma_0^n(A)$ as the union of two closed intervals, that may or may not overlap, and determines the boundaries of these two intervals by using the function \verb+climits+. Function  \verb+Toep2_tau_method+ has the same inputs and outputs, but determines $\R\cap \Sigma_0^n(A)$ by constructing the full matrix $A$ and then calling the general purpose $\tau$-method function \verb+tau_method+ that we discuss in \S\ref{sec:Herm} below. The script file \verb+test_Toep2_tau.m+, and the results from running this script file, both also included in \S\ref{code:climits}, checks that these two functions give, to within machine precision, the same output for $\R\cap \Sigma_0^n(A)$, i.e. the same number of intervals, and the same interval endpoints. This testing, indirectly, tests the implementation of \verb+climits+ and the formulae it is based on below  \cite[Equation (5.47)]{CWL}. Indeed, \verb+test_Toep2_tau+ makes comparison over a range of values of $n$, $\Delta$, and $M$ that is designed to test all cases in the formulae for $c_n^+(\Delta)$ and $c_{n,M}^-(\Delta)$ that  \verb+climits+ is based on.

\begin{figure}[t]
\begin{center}
\includegraphics[width=105mm]{DS.png} 
\end{center}
\caption{$\Spec A$, for $A=A_M(\Delta)$ with $\Delta=1.5$ and $M=60$; the Gershgorin inclusion $\R\cap G$ and its block matrix versions; and the $\tau$-method inclusions sets, $\R\cap \Sigma_0^n(A)$. The horizontal solid lines  are the eigenvalues of $A$ and the dashed lines the boundary of $\Spec A^+_\infty(\Delta)=[-3.5,-1.5]\cup [1.5,3.5]  $. The circles are $\pm c_n^+(\pm\Delta)$ (top and bottom lines) and $\pm c_{n,M}^-(\pm\Delta)$ (middle two lines); for each $M>n$, $\Spec A \subset\R \cap \Sigma_0^n(A) = \left[-c^+_{n}(-\Delta),-c^-_{n,M}(-\Delta)\right] \cup \left[c^-_{n,M}(\Delta),c^+_n(\Delta)\right]$; see \cite[\S5.3.1]{CWL}. The values of $c^-_{1,M}(\pm\Delta)$ (not plotted) are negative, so  $\R \cap \Sigma_0^1(A)=[-c^+_1(-\Delta),c^+_1(\Delta)]=[-3.5,3.5]=\R\cap G$, where $G$ is the Gershgorin inclusion set \cite[(1.10)]{CWL}.   The magenta diamonds are $\pm B_n$, where $B_n := \max S_n^+(\Delta)+2$. By \cite[(5.50)]{CWL}, $\R\cap G^{N}=[-B_n,B_n]$ for this value of $\Delta$, where $G^{N}$ is the block matrix Gershgorin inclusion set \cite[(1.8)]{CWL} with blocks of size $n$. The dash-dot lines are the first two terms of the asymptotic expansions \cite[(5.48)]{CWL} for $\pm c_n^{+}(\pm\Delta)$ and $\pm c_{n,M}^{-}(\pm\Delta)$.} \label{fig:Toep2s}
\end{figure}

\begin{figure}[t]
\begin{center}
\includegraphics[width=105mm]{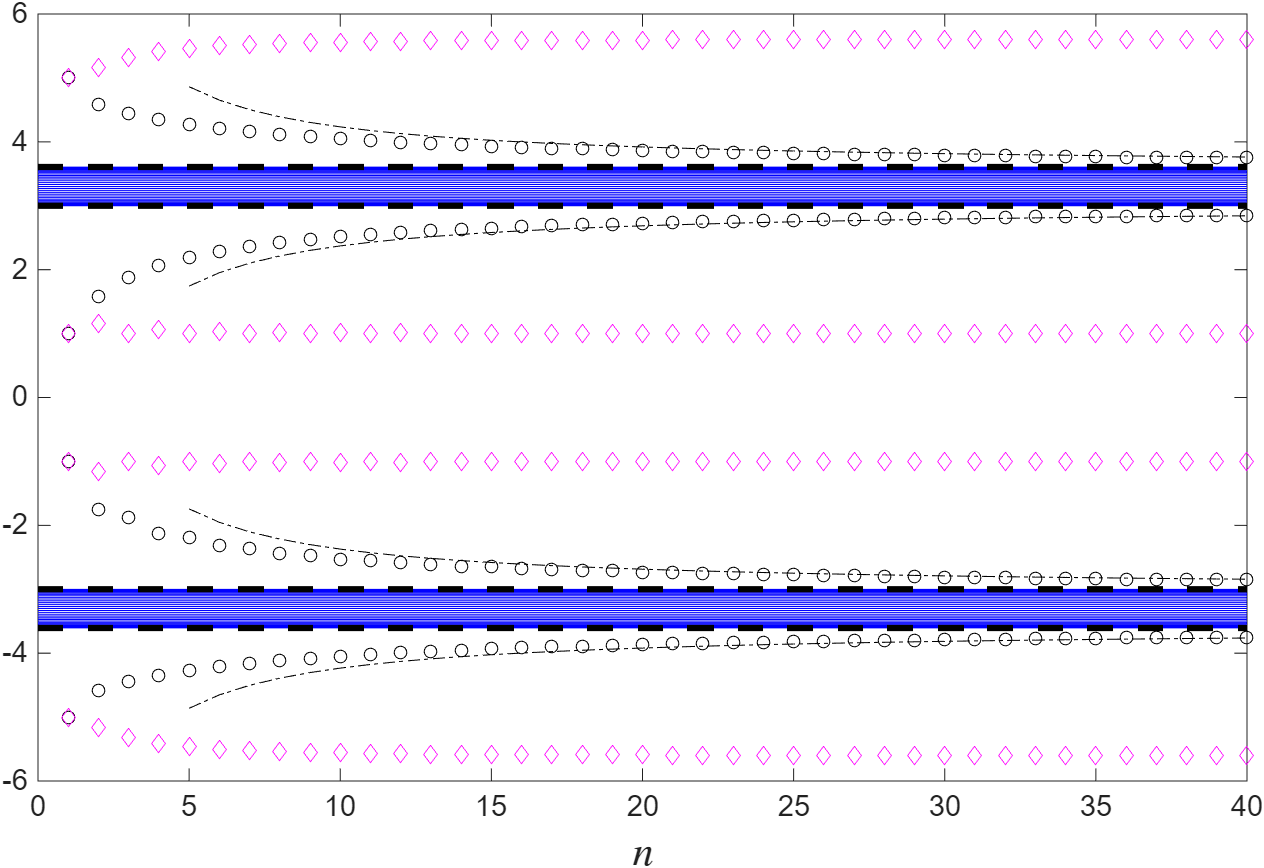} 
\end{center}
\caption{$\Spec A$, for $A=A_M(\Delta)$ with $\Delta=3$ and $M=61$; the Gershgorin inclusion $\R\cap G$ and its block matrix versions; and the $\tau$-method inclusions sets, $\R\cap \Sigma_0^n(A)$. The horizontal solid lines  are the eigenvalues of $A$ and the dashed lines the boundary of $\Spec A^+_\infty(\Delta)=[-\sqrt{13},-3]\cup [3,\sqrt{13}]$. The circles are $\pm c_n^+(\pm\Delta)$ (top and bottom lines) and $\pm c_{n,M}^-(\pm\Delta)$ (middle two lines); for each $M>n$, $\Spec A \subset\R \cap \Sigma_0^n(A) = \left[-c^+_{n}(-\Delta),-c^-_{n,M}(-\Delta)\right] \cup \left[c^-_{n,M}(\Delta),c^+_n(\Delta)\right]$; see \cite[\S5.3.1]{CWL}. Note that $\R \cap \Sigma_0^1(A)=\R\cap G$, where $G$ is the Gershgorin inclusion set \cite[(1.10)]{CWL}.   The magenta diamonds are $\pm B^\pm_n$, where $B^+_n := \max S_n^+(\Delta)+2$ and $B^-_n := \min S_n^+(\Delta)-2$. By \cite[(5.50)]{CWL}, $\R\cap G^{N}=[-B^+_n,-B^-_n]\cup [B^-_n,B^+_n]$, where $G^{N}$ is the block matrix Gershgorin inclusion set \cite[(1.8)]{CWL} with blocks of size $n$. The dash-dot lines are the first two terms of the asymptotic expansions \cite[(5.48)]{CWL} for $\pm c_n^{+}(\pm\Delta)$ and $\pm c_{n,M}^{-}(\pm\Delta)$.} \label{fig:Toep2E}
\end{figure}

\subsubsection{figure\_3\_in\_section\_5.m} \label{code:fig3}
\begin{verbatim}
% Script file figure_3_in_section_5.m, that produces Figure 5.3 in [1].
%
% [1] S. N. Chandler-Wilde and M. Lindner, Gershgorin-type inclusion sets
%     for matrices. Submitted for publication, 2024.
%
% The file produces a plot of the intersection with the real line of the
% tau method inclusion sets of [1] for Spec A, the set of eigenvalues of A,
% in the case that the order M matrix is a tridiagonal 2-Toeplitz matrix 
% with 1's on the first sub- and superdiagonal and main diagonal given by
% 
% a_{i,i} = (-1)^(i-1) Delta,   i=1,...,M.    (1)
% 
% See [1, Section 5.3] for more detail about this example. 
% The plot also displays Spec A, the large n asymptotics of the boundaries
% of these inclusion sets, and conventional block-matrix Gershgorin
% inclusion sets. See the caption of [1,Figure 5.3] for more details.
%

% The following lines set values for the parameters M, Delta, and n_len; 
% note that R \cap \Sigma_0^n(A), the tau-method inclusion set intersected
% with the real line, is plotted for n = 1,...,n_len. The user can alter
% these values to any values M >= 3, n_len < M, Delta >= 0 (the values
% currently are those of [1,Figure 5.3], i.e. M = 60, n_len = 40, Delta =
% 1.5).
disp('Display the values of the parameters M, Delta, and n_len:')
M = 60
n_len = 40
Delta = 1.5
nn = 1:n_len;

% Initialise cmv, cpv, cmvM, cpvM, Gnr_min, Gnr_max as zero vectors of the
% same size as nn
cmv = zeros(size(nn)); cpv = cmv; cmvM = cmv; cpvM = cmv;
Gnr_min = cmv; Gnr_max = cmv;

% For each n compute c_n^+(Delta) (cpv(n)), c_{n,M}^-(Delta) (cmv(n)),
% c_n^+(-Delta) (cpvM(n)), c_+n^-(-Delta) (cpvM(n)), as defined in 
% [1,Section 5.3.1]. The significance of these values is that
% R \cap \Sigma_0^n(A) = [-cpvM(n),-cmvM(n)] \cup [cmv(n),cpv(n)] if
% cmv(n) > -cmvM(n), otherwise R \cap \Sigma_0^n(A) = [-cpvM(n),cpv(n)].
%
% Also, compute the interval G_n^r = [Gnr_min(n),Gnr_max(n)], as defined
% in [1, Section 5.3.2]. In the case that n divides M, and where N = M/n
% and G^N is the block-matrix Gershgorin inclusion set using blocks of
% order n, it holds that R \cap G^N = -G_n^r \cup G_n^r, see [1,Equation
% (5.50)].
for n = 1:n_len
    [cmv(n),cpv(n)] = climits(n,M,Delta);
    [cmvM(n),cpvM(n)] = climits(n,M,-Delta);
    maxSn = sqrt(4*(cos(pi/(n+1)))^2 + Delta^2);
    Gnr_max(n) = maxSn +2;
    if rem(n,2) == 0
        minSn = sqrt(4*(sin(pi/(2*n+2)))^2 + Delta^2);
        Gnr_min(n) = minSn - 2;
    else
        Gnr_min(n) = Delta - 2;
    end
end

% Now plot the figure, first plotting the outer limits (as black circles)
% of R \cap Sigma_0^n(A) and the outer limits (as magenta diamonds) of
% \R \cap G^N.
figure(1)
plot(nn,cpv,'ko',nn,-cpvM,'ko',nn,Gnr_max,'md',nn,-Gnr_max,'md')
hold on

% Now add the inner limits (as black circles) of R \cap Sigma_0^n(A), but
% only for the values of n for which R \cap \Sigma_0^n(A) is two intervals
% rather than a single interval, i.e. where the two intervals do not
% overlap.
pos = cmv>-cmvM;
if any(pos) == true
    nn_pos = nn(pos);
    cmv_pos = cmv(pos);
    cmvM_pos = cmvM(pos);
    plot(nn_pos,cmv_pos,'ko',nn_pos,-cmvM_pos,'ko')
end

% Likewise, add the inner limits (as magenta diamonds) of R \cap G^N, but
% only for the values of n for which R \cap G^N is two intervals rather
% than a single interval.
pos = Gnr_min>0;
if any(pos) == true
    nn_pos = nn(pos);
    Gnr_pos = Gnr_min(pos);
    plot(nn_pos,Gnr_pos,'md',nn_pos,-Gnr_pos,'md')
end

% Now compute Spec A (SpecA) and plot each of these eigenvalues as a
% horizontal line.
SpecA = Toep2eigs(M,Delta);
Ones = [1 1];
xax = [0,n_len];
for i = 1:M
    plot(xax,SpecA(i)*Ones,'b-')
end

% Now add to the plot, as dashed lines, the boundaries of 
% \lim_{M\to\infty} Spec A, given by [1,Equation (5.44)].
smv = Delta*Ones;
spv = sqrt(4+Delta^2)*Ones;
plot(xax,spv,'k--',xax,-spv,'k--','LineWidth',3);
if Delta > 0
    plot(xax,smv,'k--',xax,-smv,'k--','LineWidth',3);
end

% Finally, add to the plot the first two terms of the asymptotic
% expansions, given by [1, Equation (5.48)], of \pm c_n^+(\pm Delta) and 
% \pm c_{n,M}^-(\pm Delta), the boundaries of R \cap Sigma_0^n(A). But
% only plot the asymptotic expansions of c_{n,M}^-(Delta) and
% -c_{n,M}^-(-Delta), respectively, for the values of n for which those
% expansions are positive/negative, respectively.
nn10 = 5:n_len;
cpa = sqrt(4+Delta^2)+2*pi./nn10;
cma = Delta-2*pi./nn10;
plot(nn10,cpa,'k-.',nn10,-cpa,'k-.')
pos = cma>0;
if any(pos) == true
    nn10_pos = nn10(pos);
    cma_pos = cma(pos);
    plot(nn10_pos,cma_pos,'k-.',nn10_pos,-cma_pos,'k-.')
end
hold off

% Add an x-axis label
ax = gca; ax.FontSize = 14;
xlabel('$n$',FontSize=20,Interpreter='latex')

% Export the plot to file DS.png in the current directory.
exportgraphics(gcf,'DS.png');
\end{verbatim}

\subsubsection{climits.m} \label{code:climits}
\begin{verbatim}
function [cm,cp] = climits(n,N,Delta)
%
% function [cm,cp] = climits(n,N,Delta)
%
% This function evaluates the functions c^+_n(Delta) and c^-_{n,N}(Delta)
% defined in section 5.3.1 of [1].
% 
% [1] S. N. Chandler-Wilde and M. Lindner, Gershgorin-type inclusion sets
%     for matrices. Submitted for publication, 2024.
%
% Inputs:
% N > n > 1 are integers
% Delta is real
%
% Outputs:
% cm, cp are the values of c^-_{n,N}(Delta) and c^+_n(Delta), respectively,
% with cm < cp and cp > 0.

% Compute the values of theta_n (thetan) and the value of epsilon_n 
% (epsn), as defined below [1,Equation (5.16)]. First theta_n (thetan)
thetan = theta(n);
% Now compute epsilon_n (epsn)
epsn = 4*sin(thetan/2);
% Now compute theta_{n-2} (thetan2) and epsilon_{n-2} (epsn2), if n > 2.
if n > 2
    thetan2 = theta(n-2);
    epsn2 = 4*sin(thetan2/2);
end

% Now compute the maximum (maxSn) and minimum (minSn) values of 
% S_n^+(Delta), as defined in section 5.3.1 of [1]. Note that the minimum 
% value is correct only for n even (which is all we need).
maxSn = sqrt(4*(cos(pi/(n+1)))^2 + Delta^2);
minSn = sqrt(4*(sin(pi/(2*n+2)))^2 + Delta^2);

% Finally, compute cm and cp
cp = maxSn + epsn;
if rem(n,2) == 1 | n == 2 & (rem(N,2) == 0 | Delta>0)
    cm = abs(Delta) - epsn;
else
    if rem(N,2) == 1 & Delta <= 0
        cm = minSn - epsn;
    else
        cm = max(minSn - epsn2, abs(Delta) -epsn);
    end
end

end
\end{verbatim}

\paragraph{Toep2\_tau.m}

\begin{verbatim}
function Sigma = Toep2_tau(M,Delta,n)
%
% function Sigma = Toep2_tau(M,Delta,n)
%
% This function evaluates the intersection of the tau-method inclusion set, 
% Sigma_0^n(A), with the real line, in the case that the order M natrix
% A is a tridiagonal 2-Toeplitz matrix with 1's on the first sub- and 
% superdiagonal and main diagonal given by
% 
% a_{i,i} = (-1)^(i-1) Delta,   i=1,...,M.    (1)
% 
% See [1, Section 5.3] for more detail about this example. Note that, 
% for each positive integer n < M, Sigma_0^n(A) \cap R is an inclusion set 
% for Spec A, the set of eigenvalues of A. 
% 
% This function evaluates Sigma_0^n(A) \cap R using [1,Equation (5.47)].
%
% [1] S. N. Chandler-Wilde and M. Lindner, Gershgorin-type inclusion sets
%     for matrices. Submitted for publication, 2024.
%
% Input variables:
% M - positive integer, the order of A
% Delta - real number, see equation (1) above for its meaning
% n - positive integer, with n < M
%
% Output variables:
% Sigma - real Ix2 array, for some positive integer I. The inclusion set
%         R \cap Sigma_0^n(A) consists of I disjoint closed intervals,
%         [a_i,b_i], i = 1,...,I, with I = 1 or 2 and a_i <= b_i, i = 1,
%         ...,I. The ith row of Sigma is [a_i,b_i].
%
    
    [cm_plus,cp_plus] = climits(n,M,Delta);
    [cm_minus,cp_minus] = climits(n,M,-Delta);
    if cm_plus > -cm_minus
        Sigma = [-cp_minus, -cm_minus; cm_plus, cp_plus];
    else
        Sigma = [-cp_minus, cp_plus];
    end

end
\end{verbatim}

\paragraph{Toep2\_tau\_method.m}

\begin{verbatim}
function Sigma = Toep2_tau_method(M,Delta,n)
%
% function Sigma = Toep2_tau_method(M,Delta,n)
%
% This function evaluates the intersection of the tau-method inclusion set, 
% Sigma_0^n(A), with the real line, in the case that the order M natrix
% A is a tridiagonal 2-Toeplitz matrix with 1's on the first sub- and 
% superdiagonal and main diagonal given by
% 
% a_{i,i} = (-1)^(i-1) Delta,   i=1,...,M.    (1)
% 
% See [1, Section 5.3] for more detail about this example. Note that, 
% for each positive integer n < M, Sigma_0^n(A) \cap R is an inclusion set 
% for Spec A, the set of eigenvalues of A. 
% 
% This function evaluates Sigma_0^n(A) \cap R directly from the
% definition of Sigma_0^n(A), which is [1,Euqtion (1.6)].
%
% [1] S. N. Chandler-Wilde and M. Lindner, Gershgorin-type inclusion sets
%     for matrices. Submitted for publication, 2024.
%
% Input variables:
% M - positive integer, the order of A
% Delta - real number, see equation (1) above for its meaning
% n - positive integer, with n < M
%
% Output variables:
% Sigma - real Ix2 array, for some positive integer I. The inclusion set
%         R \cap Sigma_0^n(A) consists of I disjoint closed intervals,
%         [a_i,b_i], i = 1,...,I, with I = 1 or 2 and a_i <= b_i, i = 1,
%         ...,I. The ith row of Sigma is [a_i,b_i].
%

    % Construct the matrix A, which is order M, tridiagonal and 2-Toeplitz, 
    % as described above.
    A = zeros(M);
    for i = 1:M-1
        A(i,i+1)= 1;
        A(i+1,i) = 1;
    end
    for i = 1:M
        A(i,i) = (-1)^(i-1)*Delta;
    end

    % Now compute R \cap Sigma_0^n(A)
    m = ones(1,M);
    Sigma = tau_method(A,m,n);
end
\end{verbatim}

\paragraph{test\_Toep2\_tau.m}

\begin{verbatim}
n_len = 40;
nn = 1:n_len;
Mrange = 59:62;
Delta_range = [-1.5,-1,-0.1,0,0.1,0.5,1,1.5,2,3];
max_rel_err = 0;
for n = 1:n_len
    for M = Mrange
        for Delta = Delta_range
            T2f = Toep2_tau(M,Delta,n);
            rel_err = norm(T2f -Toep2_tau_method(M,Delta,n))/norm(T2f);
            max_rel_err = max([max_rel_err,rel_err]);
        end
    end
end
disp(' ')
disp('Test of consistency of functions Toep2_tau.m and Toep2_tau_method.m')
disp(' ')
disp('Both these functions compute the tau method inclusion set ')
disp('R\cap \Sigma_0^n(A), where A is a specific order M tridiagonal')
disp('2-Toeplitz matrix with 1''s on the first sub- and superdiagonal')
disp('and \pm Delta on the main diagonal. The first function uses a')
disp('formula specific to this case, the second computes the inclusion')
disp('set by calling the generic tau method function tau_method.m.') 
disp('Computation is carried out for n = 1,...,n_len, for M = M_1,...')
disp(['M_2, and for ',num2str(length(Delta_range)),' values of Delta,'...
    ' ranging from ',num2str(Delta_range(1)),' to ',...
    num2str(Delta_range(end)),'.'])
disp(' ')
disp('max_rel_err is the maximum, over these n, M, and Delta values, of')
disp(['norm(Toep2_tau(M,Delta,n) - Toep2_tau_method(M,Delta,n))/', ...
    'norm(Toep2_tau(M,Delta,n))'])
disp(' ')
disp('The results of these computations are:')
disp(' ')
disp(['M_1 = ',num2str(Mrange(1)),', M_2 = ',num2str(Mrange(end)), ...
    ', n_len = ',num2str(n_len),', max_rel_err = ',num2str(max_rel_err)])
\end{verbatim}

\paragraph{Results from test\_Toep2\_tau.m}

\begin{verbatim}
>> test_Toep2_tau
 
Test of consistency of functions Toep2_tau.m and Toep2_tau_method.m
 
Both these functions compute the tau method inclusion set 
R\cap \Sigma_0^n(A), where A is a specific order M tridiagonal
2-Toeplitz matrix with 1's on the first sub- and superdiagonal
and \pm Delta on the main diagonal. The first function uses a
formula specific to this case, the second computes the inclusion
set by calling the generic tau method function tau_method.m.
Computation is carried out for n = 1,...,n_len, for M = M_1,...
M_2, and for 10 values of Delta, ranging from -1.5 to 3.
 
max_rel_err is the maximum, over these n, M, and Delta values, of
norm(Toep2_tau(M,Delta,n) - Toep2_tau_method(M,Delta,n))/norm(Toep2_tau(M,Delta,n))
 
The results of these computations are:
 
M_1 = 59, M_2 = 62, n_len = 40, max_rel_err = 6.5308e-16
\end{verbatim}

\subsubsection{Toep2eigs.m}  \label{code:Toep2e}

This subsection contains the function \verb+Toep2eigs+ that computes the eigenvalues of the particular 2-Toeplitz tridiagonal matrix $A_M(\Delta)$ that is discussed in \cite[\S5.3]{CWL}, using the explicit formula \cite[(5.43)]{CWL} for these eigenvalues. Also listed below are the function  \verb+Toep2eigsBF+ that does the same calculation, with the same inputs and output, using a brute force calculation, constructing the full matrix and computing its eigenvalues using \verb+eig+, and the test script file \verb+test_Toep2eigs.m+ and its output. These validate  \verb+Toep2eigs+ and the formula \cite[(5.43)]{CWL} that it is based on by demonstrating that \verb+Toep2eigs+ and \verb+Toep2eigsBF+ produce the same output, to within rounding errors, over a range of values of the parameters $\Delta$ and $M$.

\begin{verbatim}
function Spec = Toep2eigs(M,Delta)
%
% function Spec = Toep2eigs(M,Delta)
%
% Computes the eigenvalues of A, the order M tridiagonal 2-Toeplitz matrix
% with 1's on the first sub- and superdiagonal and main diagonal given by
% 
% a_{i,i} = (-1)^(i-1) Delta,   i=1,...,M,    (1)
% 
% using the formula [1, Equation (5.43)]. See [1, Section 5.3] for more 
% detail about this example.
%
% [1] S. N. Chandler-Wilde and M. Lindner, Gershgorin-type inclusion sets
%     for matrices. Submitted for publication, 2024.
%
% Input variables:
% M - positive integer, order of the matrix A
% Delta - real number, determining the leading diagonal of A through (1)
%
% Output variable:
% SpecSN - real row vector of length M, the eigenvalues 
%          of A in ascending order 
%

M2 = floor(M/2);
% Compute the M2 positive eigenvalues of A in the case Delta = 0, when
% A = LM, the discrete Laplacian of order M.
SpecLM = 2*cos((1:M2)*pi/(M+1));

% Now compute the eigenvalues of A, using [1, Equation (5.43)].
Spec = sqrt(SpecLM.^2+Delta^2);
if rem(M,2)==0
    Spec = [-Spec,flip(Spec)];
else
    Spec = [-Spec,Delta,flip(Spec)];
end
\end{verbatim}

\paragraph{Toep2eigsBF.m}

\begin{verbatim}
function Spec = Toep2eigsBF(M,Delta)
%
% function Spec = Toep2eigsBF(M,Delta)
%
% Computes the eigenvalues of A, the order M tridiagonal 2-Toeplitz matrix
% with 1's on the first sub- and superdiagonal and main diagonal given by
% 
% a_{i,i} = (-1)^(i-1) Delta,   i=1,...,M,    (1)
% 
% by a brute force calculation. See [1, Section 5.3] for more detail
% about this example.
%
% [1] S. N. Chandler-Wilde and M. Lindner, Gershgorin-type inclusion sets
%     for matrices. Submitted for publication, 2024.
%
% Input variables:
% M - positive integer, order of the matrix A
% Delta - real number, determining the leading diagonal of A through (1)
%
% Output variable:
% SpecSN - real row vector of length M, the eigenvalues 
%          of A in ascending order 
%
S_N = zeros(M);
for i = 1:M
    S_N(i,i) = (-1)^(i-1)*Delta;
end
for i=1:M-1
    S_N(i,i+1) = 1;
    S_N(i+1,i) = 1;
end
Spec = eig(S_N).';
end
\end{verbatim}

\paragraph{test\_Toep2eigs.m}
\begin{verbatim}
M_len = 40;
Delta_range = [-1.5,-1,-0.1,0,0.1,0.5,1,1.5,2,3];
max_rel_err = 0;
for M = 1:M_len
    for Delta = Delta_range
        T2e = Toep2eigs(M,Delta);
        rel_err = norm(T2e -Toep2eigsBF(M,Delta))/norm(T2e);
        max_rel_err = max([max_rel_err,rel_err]);
    end
end
disp(' ')
disp('Test of consistency of functions Toep2eigs.m and Toep2eigsBF.m')
disp(' ')
disp('Both these functions compute the eigenvalues of A, where A is a')
disp('specific order M tridiagonal 2-Toeplitz matrix with 1''s on the') 
disp('first sub- and superdiagonal and \pm Delta on the main diagonal.')
disp('The first function uses a formula specific to this case, the second')
disp('computes the eigenvalues by Matlab''s generic eig function.')
disp('Computation is carried out for M = 1,...,M_len, and for')
disp([num2str(length(Delta_range)),' values of Delta,'...
    'ranging from ',num2str(Delta_range(1)),' to ',...
    num2str(Delta_range(end)),'.'])
disp(' ')
disp('max_rel_err is the maximum, over these M and Delta values, of')
disp(['norm(Toep2eigs(M,Delta) - Toep2eigsBF(M,Delta))/', ...
    'norm(Toep2eigs(M,Delta))'])
disp(' ')
disp('The results of these computations are:')
disp(' ')
disp(['M_len = ',num2str(M_len),', max_rel_err = ',num2str(max_rel_err)])
\end{verbatim}

\paragraph{Output from test\_Toep2eigs.m}
\begin{verbatim}
>> test_Toep2eigs
 
Test of consistency of functions Toep2eigs.m and Toep2eigsBF.m
 
Both these functions compute the eigenvalues of A, where A is a
specific order M tridiagonal 2-Toeplitz matrix with 1's on the
first sub- and superdiagonal and \pm Delta on the main diagonal.
The first function uses a formula specific to this case, the second
computes the eigenvalues by Matlab's generic eig function.
Computation is carried out for M = 1,...,M_len, and for
10 values of Delta,ranging from -1.5 to 3.
 
max_rel_err is the maximum, over these M and Delta values, of
norm(Toep2eigs(M,Delta) - Toep2eigsBF(M,Delta))/norm(Toep2eigs(M,Delta))
 
The results of these computations are:
 
M_len = 40, max_rel_err = 5.2478e-16
\end{verbatim}

\section{A Matlab function to compute the $\tau$-method inclusion sets in the Hermitian case} \label{sec:Herm}

In this section we provide a Matlab function \verb+tau_method+ that computes $\R\cap \Sigma_0^n(A)$, i.e.~the intersection with the real line of the $\tau$-method inclusion set $\Sigma_0^n(A)$, as defined in \cite[\S1.1]{CWL}, in the case that $A$ is Hermitian.  Given the $M\times M$ real or complex Hermitian matrix $A$, a finite sequence $(m_1,\ldots, m_N)$ of length $N$ satisfying $1<N\leq M$, and such that
\begin{equation} \label{eq:msum}
\sum_{i=1}^N m_i = M,
\end{equation}
and  an integer $n$ with $1\leq n< N$,
the function \verb+tau_method+ computes $\R\cap \Sigma_0^n(A)$ based on a partition of $A$ in the block form
\begin{equation} \label{eq:blocks}
A = [a_{ij}]_{i,j=1}^N
\end{equation}
where each $a_{ij}\in \C^{m_i \times m_j}$ is a particular sub-block of $A$. Assuming that $A$ is Hermitian so that $\Spec A$ is real,  
\begin{equation} \label{eq:incl}
\Spec A \subset \R\cap \Sigma_0^n(A), \qquad 1\leq n < N.
\end{equation} 

The computational method is as follows. As in \cite[\S1]{CWL},
let $B:=[b_{ij}]_{i,j=1}^N$, where
$$
b_{ij} := \left\{\begin{array}{ll} a_{ij}, & |i-j|\leq 1,\\ 0, & \mbox{otherwise}.\end{array}\right.
$$
As in \cite[\S1]{CWL} we term $B$ the {\it tridiagonal} part of $A$.
Let  $C:=A-B$, which we term the {\em remaining part} of $A$, and set
\begin{eqnarray} \nonumber
r_L(A) &:=& \max\{\|a_{2,1}\|,\|a_{3,2}\|,\ldots,\|a_{N,N-1}\|\},\\ \label{eq:rdefs}
 r_U(A)& :=& \max\{\|a_{1,2}\|,\|a_{2,3}\|,\ldots,\|a_{N-1,N}\|\},\\ \nonumber
r(A) &:=& r_L(A)+r_U(A),
\end{eqnarray}
noting that $r(A)=2r_L(A)=2r_U(A)$ if $A$ is Hermitian and that all matrix norms are $2$-norms.
For $n=1,\ldots,N$ and $k=0,\ldots,N-n$, let 
$$
B_{n,k}:=[b_{ij}]_{i,j=k+1}^{k+n},
$$ 
so that $B_{n,k}$ is the $n\times n$ block-tridiagonal matrix consisting of the blocks of $B$ whose rows and columns are in the range $k+1$ through $k+n$. For $n=1,\ldots,N-1$, let
\begin{eqnarray} \label{eq:sigmahat}
\widehat \sigma^n(A) &:=&\bigcup_{k=0}^{N-n}  \Spec B_{n,k}, \\ \label{eq:sigma}
\sigma^n(A) &:=& \widehat \sigma^n_{\eps}(A) \cup \bigcup_{m=1}^{n-1} \left(\Spec  B_{m,0}\cup \Specn  B_{m,N-m}\right).
\end{eqnarray}
Then, in the case that $A$ is Hermitian, so that each $B_{n,k}$ is Hermitian, so that $\Speps B_{n,k} = B_{n,k} + \eps \overline{\D}$, for every $\eps\geq 0$, it follows, using the definitions in and above \cite[(1.6)]{CWL}, that
\begin{equation} \label{eq:hatsig}
\left.\begin{array}{ll}
\R\cap \sigma_0^n(A) = \sigma^n(A) + [-\eps_n,\eps_n], & n\in \N,\\
\R\cap \widehat \sigma_0^n(A) = \widehat \sigma^n(A) + [-\eps_{n-2},\eps_{n-2}], & n>2,\end{array} \right\}
\end{equation}
where $\eps_n := 4r_L(A) \sin(\theta_n^*/2) + \|C\|$ and $\theta_n^*$ is the unique solution of \cite[(1.5)]{CWL} in the range $\pi/(2n+1),\pi/(n+2)]$ (note that $r_L(A)r_U(A)/(r(A))^2=1/4$ in this Hermitian case). Further,
\begin{equation} \label{eq:Sigma}
\R\cap \Sigma_0^n(A) =\left\{\begin{array}{ll}\R\cap \sigma^n_{\eps}(A), & \mbox{if } n = 1,2,\\
\R\cap \sigma^n_{\eps}(A) \cap \widehat \sigma^n_{\eps}(A),& \mbox{if } n> 2.\end{array} \right.
\end{equation}  

Regarding implementation, the function \verb+tau_method+, presented in \S\ref{code:tm}, computes $\R\cap \Sigma_0^n(A)$ from $A$, $m=(m_1,m_2,\ldots,m_N)$, and $n$, using \eqref{eq:Sigma} and \eqref{eq:hatsig}, and using the function \verb+eigs_sigma+, presented in \S\ref{code:es}, to compute $\widehat \sigma^n(A)$, $\sigma^n(A)$, $r_L(A)$, and $\|C\|$. The function \verb+eigs_sigma+, largely written by ChatGPT based on a specification similar to the above, computes  $\widehat \sigma^n(A)$, $\sigma^n(A)$, $r_L(A)$, and $\|C\|$, using \eqref{eq:rdefs}, \eqref{eq:sigmahat}, and \eqref{eq:sigma}. The function \verb+theta+, presented above in \S\ref{code:theta}, computes $\theta^*_n$. 

Note that $\R\cap \sigma_0^n(A)$, $\R\cap \widehat \sigma_0^n(A)$, and $\R\cap \widehat \Sigma_0^n(A)$ are each finite unions of closed intervals of the form $[a_i,b_i]$ with $a_i\leq b_i$. These are represented in \verb+tau_method+ and \verb+eigs_sigma+ as $I\times 2$ real arrays, for some $I\in\N$, each interval $[a_i,b_i]$ stored as one of the rows. The function \verb+tau_method+ returns as output the set $\R\cap \widehat \Sigma_0^n(A)$ represented as such an array with minimal number of rows, i.e., if  $\R\cap \widehat \Sigma_0^n(A)$ is a union of $I$ disjoint closed intervals, then the array returned will have $I$ rows. The necessary merging of intervals to achieve this minimal representation, and the operations needed to perform the intersection of   $\R\cap \sigma_0^n(A)$ and $\R\cap \widehat \sigma_0^n(A)$ in \eqref{eq:Sigma}, are carried out by auxiliary functions \verb+mergeIntervals+ and \verb+intersectIntervals+, written by ChatGPT and presented in \S\ref{code:int}.

We note that we have already carried out tests that serve as a check of the implementation of the function  \verb+tau_method+ and of the functions  \verb+eigs_sigma+, \verb+mergeIntervals+, and \verb+intersectIntervals+ that it uses to carry out its computations. Specifically, we have compared in \S\ref{code:pentau} outputs computed with \verb+tau_method+ with an explicit calculation of $\R\cap \Sigma_0^n(A)$ for a particular pentadiagonal Hermitian matrix $A$, using the function \verb+penta_tau+ based on the formulae \cite[(5.40)]{CWL}. This is an example where \verb+tau_method+ predicts correctly that $\R\cap \Sigma_0^n(A)$ is a single closed interval for each $n$. Similarly, in \S\ref{sec:Toep2} and \S\ref{code:climits} we have used \verb+tau_method+, called from the function \verb+Toep2_tau_method+, to compute  $\R\cap \Sigma_0^n(A)$ in the case that $A$ is the tridiagonal 2-Toeplitz matrix of order $M$ with $\pm \Delta$ on the main diagonal, discussed in \S\ref{sec:Toep2}. In this case we have compared the results computed by \verb+tau_method+ with those computed by \verb+Toep2_tau+ in \S\ref{code:climits} by use of the explicit formula \cite[(5.47)]{CWL}. As shown in the results in \S\ref{code:climits} from \verb+test_Toep2_tau+, the function \verb+tau_method+, for the wide range of values of $n$, $M$, and $\Delta$ tested, produces the same results as \cite[(5.47)]{CWL} to within machine precision, in particular predicting the correct number of subintervals (1 or 2), depending on the values of $n$, $A$, and $M$, of which  $\R\cap \Sigma_0^n(A)$ is comprised.

As a further illustration of the use of \verb+tau_method+, we provide in \S\ref{code:Toep3} a function to compute $\R\cap \Sigma_0^n((A)$ for a similar case to that of \S\ref{sec:Toep2}, the case in which $A$ is Hermitian and tridiagonal of order $M$, with 1's on the first sub- and superdiagonal, and a length $M$ 3-periodic sequence $(0,1,0,0,1,0,\ldots)$ on the main diagonal. This function \verb+Toep3_tau_method+ takes as inputs the values $n\in \N$ and $M>n$ and computes as outputs the sets $\R\cap \Sigma_0^n(A)$ and $\Spec A$, the former encoded as a finite list of disjoint closed intervals, and computed by a call to \verb+tau_method+, choosing $m_1=\dots=m_N=1$, i.e.~$1\times 1$ blocks in the representation \eqref{eq:blocks}. The script file \verb+ThreeToeplitzExample.m+ makes calls to \verb+Toep3_tau_method+ to produce the graph in Figure \ref{fig:Toep3}. Note that the magenta solid lines at the bottom of this Figure \ref{fig:Toep3} are the spectrum of the corresponding semi-infinite matrix $A^+_\infty$, given in \cite[Examples 3.5(d) and 3.10(b)]{Gab} as
\begin{equation} \label{eq:specinf}
\Spec A^+_\infty = [-\sqrt{3},-1]\cup [1-\sqrt{2},1]\cup [\sqrt{3},1+\sqrt{2}]\cup \left\{-\frac{\sqrt{5}-1}{2}\right\}.
\end{equation}

\begin{figure}[t]
\begin{center}
\includegraphics[width=105mm]{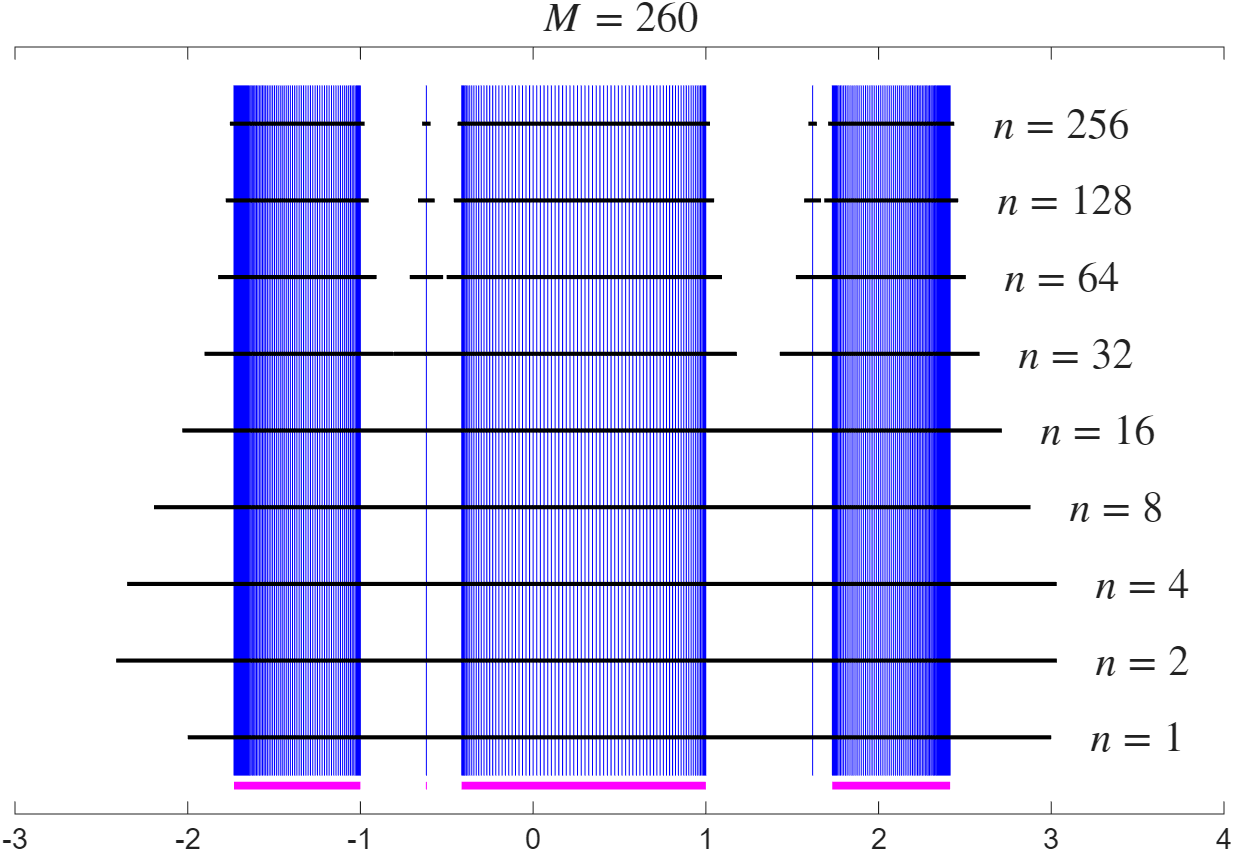} 
\end{center}
\caption{Plot of $\Spec A$ and the $\tau$-method inclusions sets, $\R\cap \Sigma_0^n(A)$, in the case that the order $M$ tridiagonal Hermitian matrix $M$ has 1's on the first sub- and superdiagonal, and the length $M$ periodic sequence $(0,1,0,0,1,0,\ldots)$ on the main diagonal. The vertical solid lines  are the eigenvalues of $A$, in the case $M=260$.  The thick solid horizontal lines are $\R \cap \Sigma_0^n(A)$ for the indicated values of $n$, each a union of finitely many disjoint closed intervals, and each an inclusion set for $\Spec A$. Note that $\R \cap \Sigma_0^1(A)=\R\cap G$, where $G$ is the Gershgorin inclusion set \cite[(1.10)]{CWL}, so that the lowest line for $n=1$ is also a plot of $\R\cap G = [-2,3]$. The magenta lines at the bottom indicate the spectrum of the corresponding semi-infinite matrix $A^+_\infty$, given by \eqref{eq:specinf}. } \label{fig:Toep3}
\end{figure}

\subsection{Matlab functions and test scripts} \label{sec:Mfts}

\subsubsection{tau\_method.m} \label{code:tm}

\begin{verbatim}
function Sigma = tau_method(A, m, n)
%
% function Sigma = tau_method(A, m, n)
%
% This function evaluates the intersection of the tau-method inclusion set, 
% Sigma_0^n(A), as defined in [1, Equation (1.6)], with the real line in
% the case that A is Hermitian. 
%
% [1] S. N. Chandler-Wilde and M. Lindner, Gershgorin-type inclusion sets
%     for matrices. Submitted for publication, 2024.
%
% Input variables:
% A         - MxM Hermitian complex matrix
% m         - 1xN vector of positive integers, such that sum(m) = M
%             In the tau method A is written in block form as A = 
%             [a_{i,j}]_{i,j=1,...,N}, where N = length(m), with a_{i,j}
%             an m_i x m_j block.
% n         - integer with 1 <= n < N = length(m)
%
% Output variable:
%   Sigma   - Ix2 real matrix encoding the tau method inclusion set
%             \Sigma_0^n(A) intersected with the real line. Precisely, 
%             \Sigma_0^n(A) \cap R is a union of I disjoint intervals, 
%             [a_i,b_i], with a_i <= b_i. Each row of Sigma is one of 
%             these intervals.

    N = length(m);
    M = sum(m);

    % Now call eigs_sigma to compute r_L(A), defined by [1, (1.3)], the 
    % maximum of the 2-norms of the blocks on the first subdiagonal, and
    % normC, the 2-norm of C, the remaining part of A, which means A with
    % the blocks on the main diagonal and on the first sub- and
    % superdiagonal set to zero. Finally compute sigmahat and sigma, both
    % unions of eigenvalues of principal square submatrices of the block
    % tridiagonal matrix B = A - C.
    [rL, normC, sigmaHat, sigma] = eigs_sigma(A, m, n);

    % Now compute the value of theta_n (thetan) and the value of epsilon_n 
    % (epsn), as defined below [1,Equation (5.16)]. First compute thetan
    thetan = theta(n);
    % Now compute epsilon_n (epsn)
    epsn = 4*rL*sin(thetan/2) + normC;
    % Now compute theta_{n-2} (thetan2) and epsilon_{n-2} (epsn2), if n > 2
    if n > 2
        thetan2 = theta(n-2);
        epsn2 = 4*rL*sin(thetan2/2) + normC;
    end
    
    % Compute sigma_0^n(A) (sigma_0n) = sigma + [-eps_n,eps_n], which is a
    % finite union of closed intervals, stored in a Jx2 real matrix, each
    % interval in one row.
    sigma_len = length(sigma);
    sigma_0n = zeros(sigma_len,2);
    for j = 1:sigma_len
        sigma_0n(j,:) = [sigma(j)-epsn,sigma(j)+epsn];
    end
    sigma_0n = mergeIntervals(sigma_0n);

    if n > 2
        % If n > 2 compute \hat sigma_0^n(A) (sigmaHat_0n) = sigmaHat + 
        % [-eps_{n-2},eps_{n-2}], which is a finite union of closed 
        % intervals, stored in a Kx2 real matrix, each interval in one row.
        sigmaHat_len = length(sigmaHat);
        sigmaHat_0n = zeros(sigmaHat_len,2);
        for j = 1:sigmaHat_len
            sigmaHat_0n(j,:) = [sigmaHat(j)-epsn2,sigmaHat(j)+epsn2];
        end
        sigmaHat_0n = mergeIntervals(sigmaHat_0n);
        % Now define Sigma as the intersection of sigma_0n nd sigmaHat_0n
        Sigma = intersectIntervals(sigma_0n, sigmaHat_0n);
    else
        Sigma = sigma_0n;
    end
end
\end{verbatim}

\subsubsection{eigs\_sigma.m} \label{code:es}

\begin{verbatim}
function [rL_val, normC, sigmaHat, sigma] = eigs_sigma(A, m, n)
%
% function [rL_val, normC, sigmaHat, sigma] = eigs_sigma(A, m, n)
%
% This function evaluates quantities needed to compute the intersection of
% the tau-method inclusion set, Sigma_0^n(A), as defined in [1, (1.6)], 
% with the real line in the case that A is Hermitian. 
%
% [1] S. N. Chandler-Wilde and M. Lindner, Gershgorin-type inclusion sets
%     for matrices. Submitted for publication, 2024.
%
% Input variables:
% A         - MxM Hermitian real or complex matrix
% m         - 1xN vector of positive integers, such that sum(m) = M
%             In the tau method A is written in block form as A = 
%             [a_{i,j}]_{i,j=1,...,N}, where N = length(m), with a_{i,j}
%             an m_i x m_j block.
% n         - integer with 1 <= n < N = length(m)
%
% Output variable:
% rL_val    - non-negative scalar, the value of r_L(A), defined by [1, (1.3)],
%             the maximum of the 2-norms of the blocks on the first 
%             subdiagonal of A.,
% normC     - non-negative scalar, the 2-norm of C, the remaining part of
%             A, which means A with the blocks on the main diagonal and on
%             the first sub- and superdiagonal set to zero.
% sigmaHat  - real row vector, containing the union of the eigenvalues
%             of all the n x n block principal square submatrices of the 
%             block tridiagonal matrix B = A - C. (This is the union over
%             k = 0,...,N-n of Spec B_{n,k}, in the notation of 
%             [1, Section 1.1].
% sigma     - real row vector, the union of sigmaHat with the union of the
%             eigenvalues of B_{m,0} and B_{m,N-n}, in the notation of 
%             [1, Section 1.1], each a submatrix of B, over m = 1,...,n-1. 


    N = length(m);
    M = sum(m);
    assert(isequal(size(A), [M M]), 'A must be an MxM matrix with M = sum(m)');
    assert(ishermitian(A), 'Matrix A must be Hermitian');
    assert(n >= 1 && n < N, 'n must be in the range 1 <= n < N');

    % Compute block indices
    blockIdx = [0, cumsum(m)];

    % Extract blocks a_{ij} in the notation of [1,(1.1)]
    a = cell(N, N);
    for i = 1:N
        for j = 1:N
            rows = (blockIdx(i)+1):blockIdx(i+1);
            cols = (blockIdx(j)+1):blockIdx(j+1);
            a{i,j} = A(rows, cols);
        end
    end

    % Construct B (tridiagonal part of A)
    B = cell(N,N);
    for i = 1:N
        for j = 1:N
            if abs(i-j) <= 1
                B{i,j} = a{i,j};
            else
                B{i,j} = zeros(m(i), m(j));
            end
        end
    end

    % Construct full B matrix
    Bfull = zeros(M, M);
    for i = 1:N
        for j = 1:N
            rows = (blockIdx(i)+1):blockIdx(i+1);
            cols = (blockIdx(j)+1):blockIdx(j+1);
            Bfull(rows, cols) = B{i,j};
        end
    end

    % Compute r_L(A), in the notation of [1,(1.3)]
    rL = zeros(1, N-1);
    for i = 2:N
        rL(i-1) = norm(a{i, i-1}, 2);
    end
    rL_val = max(rL);

    % Compute ||C||, where C is the so-called remaining part of A, see 
    % [1, Section 1.1]
    C = A - Bfull;
    normC = norm(C);

    % Compute the output sigmaHat
    sigmaHat = [];
    for k = 0:N-n
        rowRange = (blockIdx(k+1)+1):(blockIdx(k+n+1));
        Bnk = Bfull(rowRange, rowRange);
        sigmaHat = [sigmaHat, eig(Bnk)'];
    end
    sigmaHat = sort(real(sigmaHat)); % Real row vector

    % Compute the output sigma
    sigma = sigmaHat;
    for mVal = 1:(n-1)
        rows1 = (blockIdx(1)+1):(blockIdx(mVal+1));
        rows2 = (blockIdx(N-mVal+1)+1):(blockIdx(N+1));
        B1 = Bfull(rows1, rows1);
        B2 = Bfull(rows2, rows2);
        sigma = [sigma, eig(B1)', eig(B2)'];
    end
    sigma = sort(real(sigma)); % Real row vector

end
\end{verbatim}

\subsubsection{mergeIntervals.m and intersectIntervals.m} \label{code:int}

\begin{verbatim}
function outputIntervals = mergeIntervals(inputIntervals)
%
% function outputIntervals = mergeIntervals(inputIntervals)
%
% This function, given finitely many intervals [a_i,b_i], i = 1,...,I,
% computes a minimal set of intervals [A_i,B_i], i = 1,...,J 
% (minimal in the sense that J is minimal), such that 
% \cup_{i=1,..,J} [A_i,B_i] = \cup_{i=1,...,I} [a_i,b_i].
%
% Input variable:
% inputIntervals - real Ix2 array, with [a_i,b_i] in the ith row, and   
%                  a_i <= b_i for each i.
%
% Output variable:
% outputIntervals - real Jx2 array, with [A_i,B_i] in the ith row, and
%                   A_i <= B_i for each i.
%
% Apart from these initial comments, this function is written by ChatGPT.
%
    % Sort intervals by their start points
    inputIntervals = sortrows(inputIntervals, 1);

    % Initialize the merged intervals list
    merged = [];
    
    for i = 1:size(inputIntervals, 1)
        if isempty(merged) || inputIntervals(i, 1) > merged(end, 2)
            % No overlap, add new interval
            merged = [merged; inputIntervals(i, :)];
        else
            % Overlap, merge intervals
            merged(end, 2) = max(merged(end, 2), inputIntervals(i, 2));
        end
    end

    outputIntervals = merged;
end
\end{verbatim}

\begin{verbatim}
function output = intersectIntervals(A, B)
%
% function output = intersectIntervals(A,B)
%
% Function to compute the intersection of two sets, each the union of
% finitely many disjoint intervals. In more detail, this function,
% given two sets of finitely many disjoint intervals, 
% [a_i,A_i], i = 1,...,I, and [b_i,B_i], i = 1,...,J, computes the 
% minimal set of intervals [c_i,C_i], i = 1,...,K (minimal in the sense 
% that K is minimal), such that
%
% \cup_{i=1,..,K} [c_i,C_i] = 
%             \cup_{i=1,...,I} [a_i,A_i] \cap \cup_{i=1,...,J} [b_i,B_i].
%
% Input variable:
% A - real Ix2 array, with [a_i,A_i] in ith row, and a_i <= A_i for each i,
%     and A_i < a_{i+1}, i = 1,...,I-1.
% B - real Jx2 array, with [b_i,B_i] in ith row, and b_i <= B_i for each i,
%     and B_i < b_{i+1}, i = 1,...,J-1.
%
% Output variable:
% output - real Kx2 array, with [c_i,C_i] in the ith row, and
%          c_i <= C_i for each i, and C_i < c_{i+1}, i = 1,...,K-1.
%
% Apart from these initial comments, and the final line that we have added,
% this function is written by ChatGPT.
%

i = 1; j = 1;
output = [];

while i <= size(A,1) && j <= size(B,1)
    a1 = A(i,1); a2 = A(i,2);
    b1 = B(j,1); b2 = B(j,2);
   
    % Compute the overlap
    left  = max(a1, b1);
    right = min(a2, b2);
   
    if left <= right
        output = [output; left, right];
    end
   
    % Move to next interval
    if a2 < b2
        i = i + 1;
    else
        j = j + 1;
    end
end

% Add a final merge of the intervals
output = mergeIntervals(output);

end
\end{verbatim}

\subsubsection{ThreeToeplitzExample.m and Toep3\_tau\_method.m} \label{code:Toep3}

\begin{verbatim}
% Script file ThreeToeplitzExample.m, that produces additional figure
% for the supplementary materials to [1].
%
% [1] S. N. Chandler-Wilde and M. Lindner, Gershgorin-type inclusion sets
%     for matrices. Submitted for publication, 2024.
%
% The file is an example of using tau_method.m to compute the tau method
% inclusion sets of [1] for a Hermitian matrix A. It produces a plot of the 
% intersection with the real line of the tau method inclusion sets of [1] 
% for Spec A, the set of eigenvalues of A, in the case that A is an order 
% M matrix A that is tridiagonal and 3-Toeplitz with the finite 3-periodic
% sequence (0,1,0,0,1,0,...) on the main diagonal and 1's in the first 
% sub- and super-diagonal. The plot also displays Spec A. 
%

% The following lines set values for the parameters M and nn; the tau
% method inclusion set will be produced for each value of n in the vector
% nn. These values can be altered by the user. nn can be any row vector of
% positive integers. M can be any positive integer > max(nn), subject to
% system memory constraints, that the code constructs a full matrix of order
% M.
disp('Display the values of the parameters M and nn:')
M = 260; disp(['M = ',num2str(M)])
disp('n values are ')
nn = [1 2 4 8 16 32 64 128 256]; disp(nn)



% Now compute R \cap \Sigma_0^n(A), the nth tau method inclusion
% set (intersected with the real line), and Spec A, by a call to
% Toep3_tau_method, which in turn calls tau_method to compute R \cap 
% \Sigma_0^n(A). First use n = nn(1)
figure(1)
n = nn(1);
[Sigma, SpecA] = Toep3_tau_method(M,n);

% Plot Spec A, the eigenvalues of A
yax = [0.5,length(nn)+0.5];
for i = 1:M
    plot(SpecA(i)*ones,yax,'b-')
    hold on
end

% Now plot R \cap \Sigma_0^n(A) for n = nn(1)
ones = [1,1];
plot(Sigma(1,:), ones, 'k-', 'LineWidth', 2)
hold on
for m = 2:size(Sigma,1)
    plot(Sigma(m,:), ones, 'k-', 'LineWidth', 2);
end
text(Sigma(end,2)+0.2, 1, ['$n = $',num2str(nn(1))], 'Interpreter', ...
    'latex', 'FontSize', 20)
% Now repeat for n = nn(j), j = 2,...,length(nn)
for j = 2:length(nn)
    n = nn(j);
    Sigma = Toep3_tau_method(M,n);
    for m = 1:size(Sigma,1)
        plot(Sigma(m,:), j*ones, 'k-', 'LineWidth', 2);
    end
    text(Sigma(end,2)+0.2, j, ['$n = $',num2str(nn(j))], 'Interpreter', ...
    'latex', 'FontSize', 20)
end

% Now add at the bottom, in magenta, a plot of the spectrum of the 
% corresponding semi-infinite matrix.
thick = 0.05;
shift = 0.37;
fill([-sqrt(3),-1,-1,-sqrt(3)], shift+[-thick,-thick,thick,thick], ...
    'm-', 'EdgeColor', 'none');
fill([1-sqrt(2),1,1, 1-sqrt(2)], shift+[-thick,-thick,thick,thick], ...
    'm-', 'EdgeColor', 'none');
fill([1+sqrt(2), sqrt(3), sqrt(3), 1+sqrt(2)], ...
    shift+[-thick,-thick,thick,thick], 'm-', 'EdgeColor', 'none');
plot([-(sqrt(5)-1)/2,-(sqrt(5)-1)/2], shift+[-thick,thick], 'm-');
hold off

ax = gca; ax.FontSize = 14;
axis([-3, 4, 0, length(nn)+1])
ax.YTick = [];
ax.YColor = 'none';
title(['$M=$',num2str(M)],'Interpreter','latex','FontSize',20);
exportgraphics(gcf,'Toep3.png');
\end{verbatim}

\begin{verbatim}
function [Sigma, SpecA] = Toep3_tau_method(M,n)
%
% function [Sigma, SpecA] = Toep3_tau_method(M,n)
%
% This function evaluates the intersection of the tau-method inclusion set, 
% Sigma_0^n(A), with the real line, in the case that the order M natrix
% A is a tridiagonal 3-Toeplitz matrix with 1's on the first sub- and 
% superdiagonal and the 3-periodic main diagonal given by
% 
% (0,1,0,0,1,0,...)                      (1)
% 
% For each positive integer n < M, Sigma_0^n(A) \cap R is an inclusion set 
% for Spec A, the set of eigenvalues of A, which is also produced as an
% output. 
% 
% This function evaluates Sigma_0^n(A) \cap R directly from the
% definition of Sigma_0^n(A), which is [1,Equation (1.6)].
%
% [1] S. N. Chandler-Wilde and M. Lindner, Gershgorin-type inclusion sets
%     for matrices. Submitted for publication, 2024.
%
% Input variables:
% M - positive integer, the order of A
% n - positive integer, with n < M
%
% Output variables:
% Sigma - real Ix2 array, for some positive integer I. The inclusion set
%         R \cap Sigma_0^n(A) consists of I disjoint closed intervals,
%         [a_i,b_i], i = 1,...,I, with I = 1 or 2 and a_i <= b_i, i = 1,
%         ...,I. The ith row of Sigma is [a_i,b_i].
% SpecA - a row vector containing the real eigenvalues of A
%

    % Construct the matrix A, which is order M, tridiagonal and 3-Toeplitz, 
    % as described above.
    A = zeros(M);
    for i = 1:M-1
        A(i,i+1)= 1;
        A(i+1,i) = 1;
    end
    for i = 1:floor((M+1)/3)
        A(3*i-1,3*i-1) = 1;
    end

    % Now compute R \cap Sigma_0^n(A)
    m = ones(1,M);
    Sigma = tau_method(A,m,n);

    % Now compute Spec A
    SpecA = eig(A)';
end
\end{verbatim}

\end{document}